\documentclass[11pt,letterpaper]{amsart}
\usepackage{amsmath,amssymb,amsthm}
\usepackage{enumerate}
\DeclareMathOperator{\Hom}{Hom}
\DeclareMathOperator{\im}{im}
\newcommand{\C}{{\mathbb C}}
\newcommand{\F}{{\mathbb F}}
\newcommand{\R}{{\mathbb R}}
\newcommand{\Z}{{\mathbb Z}}
\newcommand{\card}[1]{\left|{#1}\right|}
\newcommand{\ft}[1]{\hat{#1}}
\newcommand{\conj}[1]{\overline{#1}}
\newcommand{\sums}[1]{\sum_{\substack{#1}}}
\newcommand{\ceil}[1]{\lceil{#1}\rceil}
\newcommand{\floor}[1]{\lfloor{#1}\rfloor}
\newcommand{\Bigfloor}[1]{\Big\lfloor{#1}\Big\rfloor}
\newcommand{\Lr}{{L^r}}
\newcommand{\Linfty}{{L^\infty}}
\newcommand{\Lfour}{{L^4}}
\newcommand{\Ltwo}{{L^2}}
\newcommand{\Ltt}[1]{{||{#1}||_2^2}}
\newcommand{\Ltf}[1]{{||{#1}||_2^4}}
\newcommand{\Lff}[1]{{||{#1}||_4^4}}
\newcommand{\normrat}[1]{{||{#1}||_4/||{#1}||_2}}
\newcommand{\qnormrat}[1]{{\frac{\Lff{#1}}{\Ltf{#1}}}}
\newcommand{\pio}{{p_\iota}}
\newcommand{\qi}{{q_\iota}}
\newcommand{\piset}{{\{\pio:\iota\in I\}}}
\newcommand{\qiset}{{\{\qi:\iota\in I\}}}
\newcommand{\Fq}{{\F_q}}
\newcommand{\Fqi}{{\F_{\qi}}}
\newcommand{\Fqu}{{\F^*_q}}
\newcommand{\Ze}{{\Z^e}}
\newcommand{\Zn}{{\Z^n}}
\newcommand{\ZmZ}{{\Z/m\Z}}
\newcommand{\achars}{{\widehat{\F}_q}}
\newcommand{\mchars}{{\widehat{\Fqu}}}
\newcommand{\Zchars}{{\widehat{\Z}}}
\newcommand{\Zechars}{{\widehat{\Ze}}}
\newcommand{\Znchars}{{\widehat{\Zn}}}
\newcommand{\fio}{f_\iota}
\newcommand{\gio}{g_\iota}
\newcommand{\psii}{\psi_\iota}
\newcommand{\chii}{\chi_\iota}
\newcommand{\alphai}{\alpha_\iota}
\newcommand{\Si}{{S_\iota}}
\newcommand{\ti}{{t_\iota}}
\newcommand{\Vi}{{V_\iota}}
\newcommand{\fields}{\{\Fqi\}_{\iota\in I}}
\newcommand{\pfam}{{\{\fio\}_{\iota \in I}}}
\newcommand{\eseg}{{\{1,\ldots,e\}}}
\newcommand{\sigmas}{{\sigma_1,\ldots,\sigma_e}}
\newcommand{\taus}{{\tau_1,\ldots,\tau_e}}
\newcommand{\grp}{\Gamma}
\newcommand{\grpcard}{\card{\grp}}
\newcommand{\grpchars}{{\widehat{\grp}}}
\newcommand{\subgrp}{K}
\newcommand{\subgrpchars}{{\widehat{\subgrp}}}
\newcommand{\quotind}{{[\grp:\subgrp]}}
\newcommand{\etaprime}{\eta^{\thinspace\prime}}
\newcommand{\kappaprime}{\kappa^\prime}
\newcommand{\lambdaprime}{\lambda^\prime}
\newcommand{\muprime}{\mu^{\thinspace\prime}}
\newcommand{\nuprime}{\nu^{\thinspace\prime}}
\newcommand{\xiprime}{\xi^{\thinspace\prime}}
\newtheorem{theorem}{Theorem}[section]
\newtheorem{proposition}[theorem]{Proposition}
\newtheorem{lemma}[theorem]{Lemma}
\newtheorem{corollary}[theorem]{Corollary}
\theoremstyle{remark}
\newtheorem{step}{Step}
\title{Asymptotic $L^4$ norm of polynomials derived from characters}
\author{Daniel J. Katz}
\thanks{D.J.~Katz is with the Department of Mathematics, Simon Fraser University, 8888 University Drive, Burnaby, BC V5A 1S6, Canada.  Email: {\tt dkatz@sfu.ca}}
\thanks{He is supported by funding from an NSERC grant awarded to Jonathan Jedwab.}
\date{19 October 2012}
\subjclass[2010]{Primary: 11C08; Secondary: 42A05, 11T24, 11B83}
\keywords{$L^4$ Norm, Littlewood Polynomial, Character Polynomial, Fekete Polynomial, Character Sum}

\begin{document}
\maketitle
\section*{Abstract}
Littlewood investigated polynomials with coefficients in $\{-1,1\}$ (Littlewood polynomials), to see how small their ratio of norms $||f||_4/||f||_2$ on the unit circle can become as $\deg(f)\to\infty$.
A small limit is equivalent to slow growth in the mean square autocorrelation of the associated binary sequences of coefficients of the polynomials.
The autocorrelation problem for arrays and higher dimensional objects has also been studied; it is the natural generalization to multivariable polynomials.
Here we find, for each $n > 1$, a family of $n$-variable Littlewood polynomials with lower asymptotic $||f||_4/||f||_2$ than any known hitherto.
We discover these through a wide survey, infeasible with previous methods, of polynomials whose coefficients come from finite field characters.
This is the first time that the lowest known asymptotic ratio of norms $||f||_4/||f||_2$ for multivariable polynomials $f(z_1,\ldots,z_n)$ is strictly less than what could be obtained by using products $f_1(z_1) \cdots f_n(z_n)$ of the best known univariate polynomials.
\section{Introduction}

\subsection{History and Main Result}\label{Gerald}

Littlewood pioneered \cite{Littlewood-1966, Littlewood-1968} the study of the $\Lfour$ norm on the complex unit circle of polynomials whose coefficients lie in $\{-1,1\}$, and in particular wanted to know how small their ratio of norms $\normrat{f}$ can become as $\deg(f)\to\infty$.
He suspected, based on calculations of Swinnerton-Dyer, that this ratio could be made to approach $1$ asymptotically, but the smallest limiting ratio he could find was $\sqrt[4]{4/3}$ for the Rudin-Shapiro polynomials \cite{Littlewood-1968}.
The $\Lfour$ norm is of particular interest since it serves as a lower bound for the $\Linfty$ norm and is easier to calculate than most other $\Lr$ norms.
Erd\H os had conjectured that $||f||_\infty/||f||_2$ is bounded away from $1$ for nonconstant polynomials with complex coefficients of unit magnitude \cite[Problem 22]{Erdos-1957}, \cite{Erdos-1962}.
This was disproved by Kahane \cite{Kahane}, but the modified problem where we restrict to coefficients in $\{-1,1\}$ remains open \cite{Newman-Byrnes}, and would be solved if one could prove that $\normrat{f}$ is bounded away from $1$ as $\deg(f)\to\infty$.
Polynomials in one or more variables with coefficients in $\{-1,1\}$ and small $\normrat{f}$ are equivalent to binary sequences and arrays (i.e., those that simply list the coefficients of the polynomials) with low mean square aperiodic autocorrelation.
Such sequences and arrays are important in the theory of communications\footnote{In this milieu, results are expressed in terms of the {\it merit factor}, defined as $\Ltf{f}/(\Lff{f}-\Ltf{f})$.} \cite{Golay-1977} and statistical physics \cite{Bernasconi}.
Accordingly, we define a {\it Littlewood polynomial} in $n$ variables to have the form
\[
f(z_1,\ldots,z_n)=\sum_{j_1=0}^{s_1-1} \cdots \sum_{j_n=0}^{s_n-1} f_{j_1,\ldots,j_n} z_1^{j_1} \cdots z_n^{j_n},
\]
with coefficient $f_{j_1,\ldots,j_n}$ in $\{-1,1\}$, and our $\Lr$ norm for $f(z_1,\ldots,z_n)$ is
\[
||f||_r = \left(\frac{1}{(2\pi)^n} \int_0^{2\pi} \cdots \int_{0}^{2\pi} \big{|}f\big{(}\exp(i\theta_1),\ldots,\exp(i\theta_n)\big{)}\big{|}^r d\theta_1\cdots d\theta_n\right)^{1/r}.
\]
Note that $||f||_2^2=s_1 \cdots s_n$ for our Littlewood polynomial.

For univariate Littlewood polynomials, the lowest asymptotic ratio of norms $\normrat{f}$ found by Littlewood himself \cite{Littlewood-1968} was $\sqrt[4]{4/3}$ for the Rudin-Shapiro polynomials.
Two decades later, this was improved to $\sqrt[4]{7/6}$ by H\o holdt-Jensen \cite{Hoholdt-Jensen}, using modifications of Fekete polynomials.  Over two decades later still, another modification yielded further improvement.
\begin{theorem}[Jedwab-Katz-Schmidt \cite{Jedwab-Katz-Schmidt-a}]\label{Gertrude}
There is a family of univariate Littlewood polynomials which, as $\deg(f)\to\infty$, has $\normrat{f} \to B_1$, the largest real root of $27 x^{12}-498 x^8 +1164 x^4 -722$, which is less than $\sqrt[4]{22/19}$.
\end{theorem}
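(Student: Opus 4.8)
The plan is to realize the family as a two-parameter modification of Fekete (Legendre) polynomials and to reduce the norm ratio to the asymptotics of incomplete quadratic-character sums. Fix an odd prime $p$, write $\chi=\left(\tfrac{\cdot}{p}\right)$ for the Legendre symbol, extended $p$-periodically to $\Z$, and choose a rotation fraction $\rho$ and a length fraction $\ell>0$. Put $t=\ceil{\ell p}$ and $s=\floor{\rho p}$, and let $f=f_p$ be the Littlewood polynomial of degree $t-1$ whose $j$th coefficient, for $0\le j<t$, is $\chi(j+s)$ when $j+s\not\equiv 0\pmod p$ and is $+1$ otherwise; this is the periodically continued Legendre sequence, started at position $s$, truncated to length $t$, with the single zero in each period replaced by a unit. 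Letting $p\to\infty$ with $(\rho,\ell)$ fixed gives, for each choice of $(\rho,\ell)$, one family of Littlewood polynomials. We first record the standard reduction: for a length-$t$ Littlewood polynomial with real coefficients, $\Ltf{f}=t^2$ and $\Lff{f}=\sum_{|u|<t}C_u^2=t^2+2\sum_{u=1}^{t-1}C_u^2$, where $C_u=\sum_j f_j f_{j+u}$ is the aperiodic autocorrelation at shift $u$ and $C_{-u}=C_u$; hence $\qnormrat{f}=1+\frac{2}{t^2}\sum_{u=1}^{t-1}C_u^2$, and everything reduces to evaluating this sum as $p\to\infty$.

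The second step is the analysis of $C_u$. Up to an $O(1)$ correction (only $O(1)$ terms per autocorrelation feel the zero-replacement), $C_u=\sum_j\chi\bigl((j+s)(j+s+u)\bigr)$ over an interval of consecutive $j$; completing the square via $(j+s)(j+s+u)\equiv(j+s+2^{-1}u)^2-(2^{-1}u)^2\pmod p$ exhibits $C_u$ as an incomplete sum $\sum_{m\in J_u}\chi(m^2-a_u)$ with $a_u\equiv(2^{-1}u)^2$, over an interval $J_u$ whose length and position are explicit in $u,\rho,\ell,p$. Since $\sum_m\chi(m^2-a)$ over a full period equals $-1$ for $a\not\equiv 0$ and $p-1$ for $a\equiv 0$, two regimes appear: for the $O(1)$ many $u\in[1,t)$ that are multiples of $p$, the interval $J_u$ covers several complete periods and $C_u$ acquires a ``spike'' of size $\asymp p$ with an explicit coefficient depending on $\ell$; for every other $u$, $C_u=\sum_{m\in J_u}\chi(m^2-a_u)+O(1)$ is an incomplete character sum of size $O(\sqrt p\log p)$ by P\'olya--Vinogradov.

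The third step is to form $\frac{2}{t^2}\sum_{u=1}^{t-1}C_u^2$ and pass to the limit. The spike terms contribute, after dividing by $t^2\asymp p^2$, an explicit piecewise-polynomial function of $\ell$. For the remaining $u$ one must show that $\frac{2}{t^2}\sum_{u}\bigl(\sum_{m\in J_u}\chi(m^2-a_u)\bigr)^2$ converges to an explicit limit depending on $(\rho,\ell)$; this is the crux. Opening the square and interchanging the order of summation, for fixed $m_1,m_2$ one must control $\sum_u\chi\bigl((m_1^2-a_u)(m_2^2-a_u)\bigr)$, the sum being over those $u$ with $m_1,m_2\in J_u$; using that $a\mapsto\chi\bigl((m_1^2-a)(m_2^2-a)\bigr)$ is a quadratic character of a quadratic, whose complete sum over $a$ is $O(1)$ unless $m_1^2\equiv m_2^2$, one isolates a diagonal contribution ($m_1\equiv\pm m_2$) that gives the genuine main term and shows the off-diagonal part is of lower order. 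Assembling the diagonal part as a Riemann sum and integrating (routine piecewise-polynomial calculus) produces a closed-form rational function $R(\rho,\ell)$, whence $\qnormrat{f}\to 1+R(\rho,\ell)$.

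It remains to minimize $1+R(\rho,\ell)$ over the relevant region of parameters: setting the partial derivatives to zero and eliminating yields an algebraic critical point $(\rho^\ast,\ell^\ast)$, and back-substitution shows the minimal value $y^\ast$ satisfies $27y^3-498y^2+1164y-722=0$, so $B_1=(y^\ast)^{1/4}$ is a root of $27x^{12}-498x^8+1164x^4-722$ and is attained by the family above with $(\rho,\ell)=(\rho^\ast,\ell^\ast)$. The bound $B_1<\sqrt[4]{22/19}$ then reduces to the numerical inequality $y^\ast<22/19$, which follows by evaluating the cubic at $22/19$ and comparing signs. The main obstacle is the third step: obtaining character-sum control strong enough to survive being squared and summed over $\asymp p$ shifts --- the pointwise P\'olya--Vinogradov bound alone does not suffice, since squaring it and summing would give $O(p^2(\log p)^2)$ rather than $O(p^2)$ --- while simultaneously tracking the exact dependence on $(\rho,\ell)$ so that the resulting value $B_1$, and hence the comparison with $\sqrt[4]{22/19}$, comes out right.
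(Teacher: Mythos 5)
Your plan is a genuinely different route from both the cited source and the machinery of the present paper. Here the result is obtained as the $e=1$ case of Theorem~\ref{Earl}\eqref{Ian}, which is proved by first converting $\Lff{f}$ into sums of products of \emph{complete} Gauss sums (the theorem of Section~\ref{Lawrence} together with Corollary~\ref{Nathan}), then estimating those via Proposition~\ref{Walter}, and finally handling the combinatorial error term by Lemma~\ref{Xavier}; the minimization is Section~\ref{Valerie}. You instead stay with the aperiodic autocorrelations $C_u$, keep them as \emph{incomplete} quadratic-character sums, open $\sum_u C_u^2$, and try to extract a main term from a diagonal. Your first two steps are fine: $\Lff{f}=t^2+2\sum_{u\ge1}C_u^2$, the $O(1)$ zero-replacement correction, and the $O(\ell)$ ``spike'' terms at $u\equiv 0\pmod p$ are all correctly identified, and the spike contribution is indeed the analogue of the paper's $\Omega(1/\sigma,0)$ term.

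The gap is in the third step, and it is sharper than ``Pólya--Vinogradov squared is too lossy.'' After opening the square you sum $\chi\bigl((m_1^2-a_u)(m_2^2-a_u)\bigr)$ over $u$ with $m_1,m_2\in J_u$, but $a_u\equiv(2^{-1}u)^2$ is quadratic in $u$, so the object is an incomplete Weil sum of a degree-four polynomial \emph{in $u$}, not a complete sum ``over $a$'' of a quadratic — the phrase ``whose complete sum over $a$ is $O(1)$'' does not describe the sum you actually need. In addition, the interval $J_u$ has a position that jumps with the parity of $u$ (since $2^{-1}\equiv(p+1)/2\pmod p$) and a length that shrinks linearly in $u$, so the constraint $m_1,m_2\in J_u$ couples the three sums; and simply applying Pólya--Vinogradov to each inner incomplete sum and then summing the $O(\sqrt p\log p)$ bounds over the roughly $p^2$ off-diagonal pairs $(m_1,m_2)$ gives a total that is too large by a power of $p$, so you would need genuine cancellation across the $(m_1,m_2)$-average, not just a pointwise bound — this is exactly the kind of multi-sum control the paper obtains cleanly from Lemma~\ref{Xavier} once everything is completed. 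Finally, you assert but do not derive the cubic in $B_1^4$ and the inequality $B_1^4<22/19$: obtaining these requires carrying out the piecewise-polynomial evaluation of the limit as a function of $(\rho,\ell)$, showing convexity on the relevant region to get uniqueness, and eliminating the optimal $\sigma$; in the paper this is the content of Step~\ref{Ursula} of Section~\ref{George} together with Lemma~\ref{Henry}\eqref{Barbara}. So the outline is recognizable and in principle salvageable, but the crux — uniform asymptotics for $\sum_u C_u^2$ with the exact $(\rho,\ell)$-dependence — is neither carried out nor correctly set up as written.
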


Prior to this paper, for each $n$, the lowest known asymptotic $\normrat{f}$ for $n$-variable Littlewood polynomials $f(z_1,\ldots,z_n)$ (in the limit as $\deg_{z_1}(f)$, $\ldots$, $\deg_{z_n}(f)\to\infty$) was simply the $n$th power of the lowest known ratio for univariate polynomials, based on the fact that if $f(z_1,\ldots,z_n)=f_1(z_1) \cdots f_n(z_n)$, then $||f||_r=||f_1||_r \cdots ||f_n||_r$.
For bivariate Littlewood polynomials, Schmidt \cite{Schmidt} obtained an asymptotic $\normrat{f}$ of $\sqrt{7/6}$ in this way (via H\o holdt-Jensen's univariate polynomials mentioned above), and foresaw the possibility that the asymptotic $\normrat{f}$ could be lowered to $B_1^2$, contingent upon the conjecture that was later established as Theorem \ref{Gertrude}.
In this paper, we show that one can do better than this product construction, even when based on the best univariate polynomials now known (those of Theorem \ref{Gertrude}).
\begin{theorem}\label{John}
For each $n > 1$, there is a family of $n$-variable Littlewood polynomials $f(z_1,\ldots,z_n)$ which, as $\deg_{z_1}(f),\ldots,\deg_{z_n}(f)\to\infty$, has $\normrat{f}$ tending to a value strictly less than $B_1^n$.
\end{theorem}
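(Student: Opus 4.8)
The plan is to realize the required family as character polynomials obtained by laying a one-dimensional character sequence across an $n$-dimensional box along a \emph{skew} lattice direction, to reduce $\Lff{f}$ to the (essentially trivial) periodic autocorrelations of that sequence together with an explicitly computable boundary correction, and then to minimize the resulting limit formula. Concretely, fix $n>1$; for each $\iota\in I$ let $q_\iota$ be prime, let $\psii$ be the quadratic character of $\Fqi$ extended by $\psii^\ast(0)=1$, fix reals $t_1,\dots,t_n>0$ and $\rho_1,\dots,\rho_n$, and a lattice $L\subset\R^n$ of covolume $1$. Choose integers $a_{\iota,1},\dots,a_{\iota,n},r_\iota$ so that the index-$q_\iota$ sublattice
\[
\Lambda_\iota=\{\mathbf u\in\Zn: a_{\iota,1}u_1+\cdots+a_{\iota,n}u_n\equiv 0\pmod{q_\iota}\}
\]
satisfies $q_\iota^{-1/n}\Lambda_\iota\to L$ --- possible because the rescaled index-$q_\iota$ sublattices of $\Zn$ equidistribute among the covolume-one lattices as $q_\iota\to\infty$ --- and put $\ell_{\iota,k}=\floor{t_k q_\iota^{1/n}}$. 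Define
\[
\fio(z_1,\dots,z_n)=\sum_{j_1=0}^{\ell_{\iota,1}-1}\!\cdots\!\sum_{j_n=0}^{\ell_{\iota,n}-1}\psii^\ast\!\big(a_{\iota,1}j_1+\cdots+a_{\iota,n}j_n+r_\iota\big)\,z_1^{j_1}\cdots z_n^{j_n},
\]
a Littlewood polynomial with coefficient box $W_\iota=\prod_k\{0,\dots,\ell_{\iota,k}-1\}$ and $\Ltf{\fio}=(\ell_{\iota,1}\cdots\ell_{\iota,n})^2=(t_1\cdots t_n)^2 q_\iota^2+O(q_\iota^{2-1/n})$.

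Next I reduce $\Lff{\fio}$. The coefficient array is $\Lambda_\iota$-periodic, and since $\mathbf u\mapsto\sum_k a_{\iota,k}u_k$ carries a $\Lambda_\iota$-fundamental domain bijectively onto $\Fqi$, the periodic autocorrelation of the array at $\mathbf u$ is the complete character sum $\sum_{x\in\Fqi}\psii^\ast(x)\psii^\ast\!\big(x+\sum_k a_{\iota,k}u_k\big)$, which classical evaluation gives as $q_\iota-1$ for $\mathbf u\in\Lambda_\iota$ and $-1$ otherwise, up to an $O(1)$ term from the value at $0$. Writing $\Lff{\fio}=\sum_{\mathbf u\in\Zn}C_\iota(\mathbf u)^2$ for the sum of squared aperiodic autocorrelations over $W_\iota$, decompose $C_\iota(\mathbf u)=\lambda_\iota(\mathbf u)\,\gamma_\iota(\mathbf u)+E_\iota(\mathbf u)$, where $\gamma_\iota(\mathbf u)$ is that periodic autocorrelation, $\lambda_\iota(\mathbf u)$ is the number of full $\Lambda_\iota$-fundamental domains inside the overlap $W_\iota\cap(W_\iota-\mathbf u)$, and $E_\iota(\mathbf u)$ is an incomplete character sum over a residual region of volume $<q_\iota$. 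Expanding the square, the cross terms sum over all $\mathbf u$ to $q_\iota^{3/2+o(1)}=o(q_\iota^2)$ by completion together with Weil's bound for mixed character sums; the terms $\lambda_\iota(\mathbf u)^2\gamma_\iota(\mathbf u)^2$ are a pure lattice-point sum dominated by the $O(1)$ vectors $\mathbf u\in\Lambda_\iota\cap(W_\iota-W_\iota)$, for which $\gamma_\iota(\mathbf u)^2\asymp q_\iota^2$; and $\sum_{\mathbf u}E_\iota(\mathbf u)^2$ is itself a lower-order balanced-quadruple character sum that one evaluates asymptotically by the same bookkeeping, yielding an explicit ``aperiodicity correction'' of exact order $q_\iota^2$.

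Rescaling $W_\iota$ to $\prod_k[0,t_k]$ and $\Lambda_\iota$ to $L$, then dividing by $q_\iota^2$, gives
\[
\lim_{\iota}\qnormrat{\fio}=\Phi\big(L,(t_k),(\rho_k)\big)=\frac{1}{(t_1\cdots t_n)^2}\Big(\sum_{\boldsymbol\xi\in L}\Big(\prod_{k=1}^{n}(t_k-|\xi_k|)^{+}\Big)^{2}+\mathrm{Corr}\big(L,(t_k),(\rho_k)\big)\Big),
\]
an explicit piecewise-real-analytic function of its parameters, with $\mathrm{Corr}$ the correction above. The benchmark $B_1^n$ comes from products $f_1(z_1)\cdots f_n(z_n)$ of the optimal univariate polynomials of Theorem \ref{Gertrude}, for which $\qnormrat{f}=\prod_k\qnormrat{f_k}$ and the limit is $B_1^{4n}$; moreover $\Phi$ has exactly this product structure when $L$ is an orthogonal axis-parallel lattice. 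Thus, to prove the theorem it suffices to exhibit a genuinely skew $L$ together with parameters $(t_k),(\rho_k)$ for which $\Phi<B_1^{4n}$, and to approximate it by attainable kernel lattices; then letting $q_\iota\to\infty$ along the matching primes produces the asserted family with $\normrat{\fio}\to\Phi(\cdot)^{1/4}<B_1^n$. One carries this out either by a first-variation computation --- shearing $L$ off the orthogonal locus and checking that some one-sided directional derivative of $\Phi$ points downward --- or, failing that, by a single explicit evaluation, finite because $B_1$ is a concrete algebraic number.

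The main obstacle is the reduction step: one must locate and evaluate \emph{every} contribution to $\Lff{\fio}$ of order $q_\iota^2$ --- the skew-lattice overlap terms, the boundary term $\mathrm{Corr}$, and their interactions --- and the $n$-dimensional polytopal combinatorics producing $\mathrm{Corr}$ is a substantial generalization of the intricate univariate analysis underlying Theorem \ref{Gertrude}, now carrying the lattice $L$ as an additional ingredient. A secondary obstacle is the concluding optimization, since the strict inequality must be certified against the algebraic number $B_1$, which requires either an analytically identifiable advantageous shear or a rigorously bounded numerical search.
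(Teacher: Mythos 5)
Your approach differs substantially from the paper's, and the gain mechanism you propose is not the one the paper uses. The paper's $n$-variable construction uses the quadratic character of the \emph{non-prime} field $\F_{p^n}$, and the arrangement $\alpha\colon\Z^n\to\F_{p^n}$, being a group epimorphism, automatically has kernel $p\Z^n$: the coefficient array is periodic with respect to the rectangular lattice $p\Z^n$, and no skewing is involved at all. The improvement over $B_1^n$ comes instead from the three-term asymptotic formula of Theorem \ref{Earl}\eqref{Nancy}, whose final term $\prod_j\Omega(1/\sigma_j,1+2\tau_j/\sigma_j)$ appears only for quadratic characters; because the full expression is a sum of three products rather than a single product, its joint minimum in the sizes and translations does not factor as the $n$th power of the one-variable minimum, and Theorem \ref{Orlando} (via the numerical bounds of Lemma \ref{Henry}) certifies $B_{e_1+e_2}<B_{e_1}B_{e_2}$, hence $B_e<B_1^e$. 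Your construction instead takes $q$ prime, folds a single Legendre sequence into an $n$-box along a skew direction $(a_{\iota,1},\dots,a_{\iota,n})$, and tries to profit from the tilt. For $q$ prime the paper's multiplicative character polynomials are necessarily one-variable, so your objects live outside the framework that Theorems \ref{Harold} and \ref{Earl} analyze.

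Beyond being a different idea, the proposal has gaps you yourself flag, and the anchor for your optimization step is unjustified. You do not derive the limit formula $\Phi$; $\mathrm{Corr}$ is only asserted to be an ``explicitly computable'' boundary term, and the $O(q_\iota^2)$ bookkeeping is conceded to be the hard part. You do not prove $\Phi<B_1^{4n}$ for any skew $L$; both the first-variation argument and the numerical verification are left to be done. Most seriously, the claim that $\Phi$ ``has exactly this product structure when $L$ is an orthogonal axis-parallel lattice,'' which is what would give a first-variation argument a foothold, does not hold in your setting: if all $a_{\iota,j}\not\equiv 0\pmod{q_\iota}$, then the shortest nonzero vector of $\Lambda_\iota$ along any coordinate axis has length $q_\iota$, so after rescaling by $q_\iota^{-1/n}$ its length is $q_\iota^{1-1/n}\to\infty$ and the limit lattice cannot be a non-degenerate axis-parallel $\beta_1\Z\times\cdots\times\beta_n\Z$. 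Moreover the benchmark $B_1^{4n}$ is achieved by a product of $n$ \emph{independent} optimal univariate Legendre families over distinct primes, not by any single sequence folded into a box, so there is no reason the ``zero-shear'' value of your $\Phi$ (even if it existed) would equal $B_1^{4n}$. The equidistribution of rescaled index-$q$ sublattices among covolume-one lattices, which you invoke to realize a chosen $L$, is yet another unproved ingredient. In short, the skew-lattice route is a different and incomplete plan, and it is not where the paper's improvement comes from.
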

The lowest known asymptotic $\normrat{f}$ for $n$-variable Littlewood polynomials is an algebraic number depending on $n$, and is specified precisely in Section \ref{Francis} after we define in Section \ref{Richard} the polynomials that are involved.\footnote{Gulliver and Parker \cite{Gulliver-Parker} have also studied $\normrat{f}$ for multivariable Littlewood polynomials, but in a very different limit: they let the number of variables tend to infinity while keeping the degree in each variable less than or equal to one.}

\subsection{Character Polynomials}\label{Richard}

The polynomials used by H\o holdt-Jensen \cite{Hoholdt-Jensen}, Jedwab-Katz-Schmidt \cite{Jedwab-Katz-Schmidt-a}, and in the current paper to break previous records for the lowest known asymptotic $\normrat{f}$ are all character polynomials, i.e., polynomials whose coefficients are given by characters of finite fields.
The $\Lfour$ norms of character polynomials have already been studied extensively~\cite{Hoholdt-Jensen, Jensen-Hoholdt, Jensen-Jensen-Hoholdt, Boemer-Antweiler, Borwein, Borwein-Choi-2000, Borwein-Choi-2002, Jedwab, Hoholdt, Jedwab-Schmidt, Schmidt}, but it took the new methods of this paper to discover and verify the properties of the polynomials of our Theorem \ref{John}.

The interrelation between the additive and multiplicative structures of finite fields endow character polynomials with their remarkable qualities: the coefficients of an {\it additive character polynomial} are obtained by applying an additive character of a finite field to its nonzero elements arranged multiplicatively (listed as successive powers of a primitive element), while the coefficients of a {\it multiplicative character polynomial} are obtained by applying a multiplicative character of a finite field to its elements arranged additively (as $\Z$-linear combinations of the generators, arrayed in a box whose dimensionality equals the number of generators).
Thus an additive character polynomial has the form
\begin{equation}\label{Samson}
f(z) = \sum_{j \in S} \psi(\alpha(j+t)) z^j,
\end{equation}
where $\psi\colon\Fq \to \C$ is a nontrivial additive character, the {\it support} $S$ is a set of the form $\{0,1,\ldots,s-1\}$, the {\it translation} $t$ is an element of $\Z$, and the {\it arrangement} $\alpha$ is a group epimorphism from $\Z$ to $\Fqu$.
A multiplicative character polynomial has the form
\begin{equation}\label{Raymond}
f(z_1,\ldots,z_e) = \sum_{j=(j_1,\ldots,j_e) \in S} \chi(\alpha(j+t)) z_1^{j_1} \cdots z_e^{j_e},
\end{equation}
where $e$ is a positive integer, $\chi$ is a nontrivial complex-valued multiplicative character of $\Fq=\F_{p^e}$ with $p$ prime, the {\it support} $S$ is $S_1\times\cdots\times S_e$ with each $S_k$ a set of the form $\{0,1,\ldots,s_k-1\}$, while the {\it translation} $t=(t_1,\ldots,t_e)$ is in $\Z^e$, and the {\it arrangement} $\alpha$ is a group epimorphism from $\Ze$ to $\Fq$.
We always extend nontrivial multiplicative characters to take $0$ to $0$.

We now define the Fekete polynomials and their modifications used by H\o holdt-Jensen and Jedwab-Katz-Schmidt.
For an odd prime $p$, the $p$th Fekete polynomial is a multiplicative character polynomial using the quadratic character (Legendre symbol) over the prime field $\F_p$, support $S=\{0,1,\ldots,p-1\}$, translation $t=0$, and arrangement $\alpha\colon \Z\to\F_p$ given by reduction modulo $p$.
Fekete polynomials are themselves the subject of many fascinating studies \cite{Fekete-Polya, Polya, Montgomery, Baker-Montgomery, Conrey-Granville-Poonen-Soundararajan, Borwein-Choi-Yazdani, Borwein-Choi-2002} linking number theory and analysis.
The polynomials used by H\o holdt-Jensen \cite{Hoholdt-Jensen} to obtain asymptotic $\normrat{f}$ of $\sqrt[4]{7/6}$ have the same character, support, and arrangement, but the translations $t$ are chosen such that $t/p \to 1/4$ as $p\to \infty$, and any coefficient of $0$ (arising from the extended multiplicative character) is replaced with $1$ to obtain Littlewood polynomials.
To obtain asymptotic $\normrat{f}$ less than $\sqrt[4]{22/19}$, Jedwab-Katz-Schmidt \cite{Jedwab-Katz-Schmidt-a} use a different limit for $t/p$, and also allow the support $S=\{0,1,\ldots,s-1\}$ to be of size other than $p$, and in fact let $s/p$ tend to a number slightly larger than $1$ as $p\to\infty$.

The families of character polynomials used here are based on similar asymptotics: we say that a family $\pfam$ of additive character polynomials is {\it size-stable} to mean that if we write $\Fqi$ and $\Si$ respectively for the field and support of $\fio$, then $\qiset$ is infinite and $\card{\Si}/(\qi-1)$ tends to a positive real number $\sigma$ (called the {\it limiting size}) as $\qi \to \infty$.
Likewise, we say that a family of $e$-variable multiplicative character polynomials $\pfam$ is {\it size-stable} (resp., {\it translation-stable}) to mean that if we write $\F_{p_\iota^e}$, $\Si=S_{\iota,1} \times \cdots \times S_{\iota,e}$, and $\ti=(t_{\iota,1},\ldots,t_{\iota,e})$ respectively for the field, support, and translation of $\fio$, then the set of primes $\piset$ is infinite and for each $k \in \eseg$, the ratio $\card{S_{\iota,k}}/\pio$ (resp., $t_{\iota,k}/\pio$) tends to a positive real number $\sigma_k$ (resp., a real number $\tau_k$) as $\qi\to\infty$.
We call $\sigmas$ the {\it limiting sizes} and $\taus$ the {\it limiting translations}.

\subsection{Subsidiary Results}\label{Francis}

We discovered the polynomials of Theorem \ref{John} via a survey, enabled by the methods presented in this paper, of the asymptotic $\Lfour$ norms of both additive and multiplicative character polynomials.
Quadratic multiplicative characters behave differently than non-quadratic ones, so we treat them separately: we have {\it quadratic families} in which every character is quadratic, and {\it non-quadratic} families in which none is.
We then have three theorems: one for additive characters and two for the different types of multiplicative characters, and we express our limiting norms in terms of the function 
\begin{equation}\label{Hilda}
\Omega(x,y) = \sum_{n \in \Z} \max(0,1-|x n-y|)^2,
\end{equation}
which is defined and continuous on $\{(x,y) \in \R^2: x\not=0\}$.
\begin{theorem}\label{Gabriel}
Let $\pfam$ be a size-stable family, with limiting size $\sigma$, of additive character polynomials over fields $\fields$.
\begin{enumerate}[(i)]
\item\label{Fabian} As $\qi \to \infty$, we have 
\[
\qnormrat{\fio} \to -\frac{2}{3}\sigma + 2 \, \Omega\left(\frac{1}{\sigma},0\right).
\]
\item\label{James} This limit is globally minimized if and only if $\sigma$ is the unique root in $(1,1+\frac{9}{64})$ of $x^3-12 x+12$.
\end{enumerate}
\end{theorem}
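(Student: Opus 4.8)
The plan is to obtain part~(\ref{Fabian}) by reducing the $\Lfour$ norm to a sum of aperiodic autocorrelations, evaluating these via the structure of the finite field, and controlling the resulting error with Weil's bound; part~(\ref{James}) is then elementary calculus on the explicit function of~$\sigma$ that~(\ref{Fabian}) produces.

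For~(\ref{Fabian}), write $s=\card{\Si}$ and $L=\qi-1$, let $\gamma$ be the primitive element of $\Fqi$ with $\alphai(j)=\gamma^j$, and put $w(n)=\psii(\gamma^n)$, a sequence of period~$L$. Then $\fio(z)=\sum_{j=0}^{s-1}w(j+\ti)z^j$ has unimodular coefficients, so $\Ltt{\fio}=s$ and hence $\qnormrat{\fio}=s^{-2}\sum_{k\in\Z}\card{C_k}^2$, where $C_k=\sum_j(\fio)_j\,\overline{(\fio)_{j-k}}$ is the aperiodic autocorrelation at shift~$k$. Because $\psii$ is a nontrivial additive character and $\sum_{y\in\Fqi}\psii(y)=0$, one checks that $C_k=\max(0,s-\card{k})$ whenever $L\mid k$, while for $L\nmid k$ the quantity $C_k$ is an incomplete sum $\sum_{n\in I}w(n)$ over an interval~$I$ of $s-\card{k}$ consecutive integers; moreover $w$ has two-level periodic autocorrelation ($L$ at shift~$0$ and $-1$ at every other shift), equivalently its discrete Fourier transform over $\Z/L\Z$ equals $-1$ at the zero frequency and has modulus $\sqrt{\qi}$ (a Gauss sum) elsewhere. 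The terms with $L\mid k$ contribute $\sum_{n\in\Z}\max(0,s-\card{n}L)^2=s^2\,\Omega(L/s,0)$, which by size-stability tends to $s^2\,\Omega(1/\sigma,0)$.

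The heart of~(\ref{Fabian}) is the claim $s^{-2}\sum_{L\nmid k}\card{C_k}^2\to\Omega(1/\sigma,0)-\tfrac23\sigma$. Squaring out $\card{C_k}^2=\sum_{n,n'\in I}\psii(\gamma^{n}-\gamma^{n'})$ and separating the pairs with $\gamma^{n}=\gamma^{n'}$ (i.e.\ $n\equiv n'\pmod L$, which for $s>L$ includes off-diagonal pairs) splits this into a purely combinatorial main term---a count of overlaps of $\Si$ and of its sub-intervals with their translates by multiples of~$L$---plus an error built from nontrivial values of~$\psii$. After dividing by $s^2$, the main term is a combination of Riemann-type sums converging on one hand to $\Omega(1/\sigma,0)$ (from overlaps by multiples of~$L$) and on the other to $-2\sigma\!\int_0^1 x^2\,dx=-\tfrac23\sigma$ (from the lengths of the sub-intervals). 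I expect the error term to be the real obstacle: after collecting its oscillatory part and summing by parts one is left with incomplete multiplicative-character sums of the shape $\sum_d\chi(\gamma^d-1)$ over intervals in~$d$, and once-iterated variants of these, each of which can be completed to a complete sum and then estimated by Weil's theorem for curves over finite fields (a Jacobi sum of modulus at most $\sqrt{\qi}$, at the cost of a logarithmic factor); the task is to arrange the several nested summations so that these square-root savings survive and force the total error to be $o(s^2)$, and this is where the methods developed in the body of the paper are needed. Adding the error estimate to the $L\mid k$ contribution and to the main term gives $\qnormrat{\fio}\to 2\,\Omega(1/\sigma,0)-\tfrac23\sigma$, the asserted limit.

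For~(\ref{James}), set $g(\sigma)=-\tfrac23\sigma+2\,\Omega(1/\sigma,0)$, continuous on $(0,\infty)$, with $g(\sigma)\to\infty$ as $\sigma\to\infty$; since $\Omega(1/\sigma,0)=1+2\sum_{n\ge1}\max(0,1-n/\sigma)^2$, on each interval $[m,m+1]$ (for $m\ge0$ an integer) $g$ is a rational function, so it suffices to minimise piecewise. On $(0,1]$, $g(\sigma)=2-\tfrac23\sigma$ is strictly decreasing with $g(1)=\tfrac43$. On $[1,2]$, $g(\sigma)=-\tfrac23\sigma+2+4(1-1/\sigma)^2$ with $g'(\sigma)=-\tfrac23(\sigma^3-12\sigma+12)/\sigma^3$; the cubic $\sigma^3-12\sigma+12$ has derivative $3\sigma^2-12<0$ on $[1,2]$, is positive at $\sigma=1$, and is negative at $\sigma=1+\tfrac9{64}$ (direct evaluation), hence has a unique root $\sigma^\ast$ in $\bigl(1,1+\tfrac9{64}\bigr)$, with $g'<0$ on $[1,\sigma^\ast)$ and $g'>0$ on $(\sigma^\ast,2]$; so $\sigma^\ast$ is a strict minimiser on $[1,2]$, with $g(\sigma^\ast)<g(1)=\tfrac43$ and in fact $g(\sigma^\ast)\le g(\tfrac{11}{10})<\tfrac{19}{12}$. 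For $\sigma\ge2$: on $[2,3]$ and $[3,4]$ the analogous numerators of $g'$ are $-\tfrac23(\sigma^3-36\sigma+60)$ and $-\tfrac23(\sigma^3-72\sigma+168)$, which are positive on the respective intervals (the cubics being negative there), so $g$ increases and $g\ge g(2)=\tfrac53$, resp.\ $g\ge g(3)=\tfrac{20}{9}$; and for $\sigma\ge4$ the elementary bound $\sum_{n\ge1}\max(0,1-n/\sigma)^2\ge\int_1^\sigma(1-x/\sigma)^2\,dx=(\sigma-1)^3/(3\sigma^2)$ gives $g(\sigma)\ge-\tfrac23\sigma+2+\tfrac43(\sigma-1)^3/\sigma^2$, whose derivative has numerator $\tfrac23(\sigma-2)(\sigma^2+2\sigma-2)\ge0$ on $[2,\infty)$, so it is increasing there and equals $\tfrac{19}{12}$ at $\sigma=4$. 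Since $\tfrac43$, $\tfrac53$, $\tfrac{20}{9}$ and $\tfrac{19}{12}$ all strictly exceed $g(\sigma^\ast)$, the limit in~(\ref{Fabian}) is globally minimised exactly when $\sigma=\sigma^\ast$, the unique root in $\bigl(1,1+\tfrac9{64}\bigr)$ of $x^3-12x+12$.
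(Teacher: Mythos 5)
Part~(\ref{James}) is correct and matches the paper's elementary-calculus approach. Part~(\ref{Fabian}) contains a genuine gap, and it also takes a different route from the paper. The paper works in Fourier space: it applies the inverse Fourier transform over $\Fqu$ to express $\Lff{\fio}$ as a quadruple sum over multiplicative characters weighted by products of Gauss sums (Corollary~\ref{Martin}), separates out the main term $\{\kappa,\lambda\}=\{\mu,\nu\}$ via Proposition~\ref{Vaclav} (one application of the Weil bound inside that proposition), and then controls the remaining quadruple character sum by Lemma~\ref{Xavier}, which bounds the sum over all $\kappa,\lambda,\mu,\nu$ of the absolute values of the inner combinatorial sums by $64\,m\max(m,\card{U})^3(1+\log m)^3$. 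That lemma is the crux, because it extracts cancellation across the four characters simultaneously. Your sketch has no analogue of it: completing each autocorrelation $C_k$ individually (the $\sum_d\chi(\gamma^d-1)$ device you describe) yields only the pointwise estimate $\card{C_k}\lesssim\sqrt{q}\log q$, whence $\sum_{L\nmid k}\card{C_k}^2\lesssim s\, q\log^2 q\asymp s^2\log^2 q$ when $s\asymp q$, which is \emph{not} $o(s^2)$; some cancellation in $k$ must also be exploited, and you explicitly defer this step (``this is where the methods developed in the body of the paper are needed''). Separately, your main-term split is not internally consistent: for $\sigma<1$ the diagonal pairs $n\equiv n'\pmod L$ already contribute $\sim s^2$, so the $-\tfrac{2}{3}\sigma$ piece must come from the pairs with $\psii\ne 1$, and is therefore not part of the ``error built from nontrivial values of $\psii$'' but a second main term requiring its own evaluation. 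In the paper's decomposition this is the $D$ term, which corrects for the double counting of $\kappa=\lambda=\mu=\nu$ and has the closed form $D=(2\card{S}^3+\card{S})/(3(q-1))$; your proposal never actually produces this quantity.
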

\begin{theorem}\label{Harold}
Let $\pfam$ be a size-stable family, with limiting sizes $\sigma_1$, $\ldots$, $\sigma_e$, of $e$-variable non-quadratic multiplicative character polynomials over fields $\fields$.
\begin{enumerate}[(i)]
\item\label{Yakov} As $\qi\to\infty$, we have 
\[
\qnormrat{\fio} \to -\frac{2^e}{3^e} \prod_{j=1}^e \sigma_i + 2 \prod_{j=1}^e \Omega\left(\frac{1}{\sigma_j},0\right).
\]
\item\label{Karl} This limit is globally minimized if and only if $\sigmas$ all equal the unique root in $(1,1+\frac{3^{e+1}}{2^{2 e+4}})$ of $x^{3 e} -\frac{3^e}{2^{e-3}}(x-1)(3 x^2 -4 x +2)^{e-1}$.
\end{enumerate}
\end{theorem}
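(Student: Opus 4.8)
The plan is to establish \eqref{Yakov} by a direct asymptotic evaluation of the $\Lfour$ norm and then derive \eqref{Karl} by minimizing the explicit function of $\sigmas$ that \eqref{Yakov} produces. For \eqref{Yakov}: since a multiplicative character polynomial has $\Ltt{\fio}=\card{\Si}+O(1)$, the only correction coming from the finitely many coefficients killed by the convention $\chi(0)=0$, it suffices to evaluate $\Lff{\fio}$ to leading order. Writing $f=\fio$ and squaring, $\Lff{f}=\sum_{\vec a}\card{T_{\vec a}}^2$ with $T_{\vec a}$ the coefficient of $z_1^{a_1}\cdots z_e^{a_e}$ in $f^2$, so $T_{\vec a}=\sum_{j}\chi(\alpha(j+t))\,\chi(\alpha(\vec a-j+t))$ over the box $\Si\cap(\vec a-\Si)$. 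Because $\alpha$ is additive this is $\sum_x\chi(x)\chi(c-x)$ over the box of values $x=\alpha(j+t)$ in $\Fqi$, with $c=\alpha(\vec a+2t)$; so each $T_{\vec a}$ is an incomplete multiplicative character sum over a box, and completing it over all of $\Fqi$ yields a Jacobi-type sum $S_{\vec a}$. Here the non-quadratic hypothesis is essential: for non-quadratic $\chi$ one has $\card{S_{\vec a}}=\sqrt{\qi}$ whenever $c\neq0$, and $S_{\vec a}=0$ when $c=0$ (the quadratic case instead has $\card{S_{\vec a}}=O(1)$, which is why it is treated separately and has a different limit). Fourier completion on the product box gives $T_{\vec a}=\qi^{-1}\card{\Si\cap(\vec a-\Si)}\,S_{\vec a}+(\text{boundary fluctuation})$, and three effects contribute to the leading order $\qi^{2}$ of $\sum_{\vec a}\card{T_{\vec a}}^2$: the $S_{\vec a}$-part, controlled by $\qi^{-1}\sum_{\vec a}\card{\Si\cap(\vec a-\Si)}^2$ and producing a Riemann-sum limit of the product shape in $\Omega$; its interaction with the $O(1)$ shifts having $c=0$; and the second moment of the boundary fluctuations. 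Collecting these, and using $\Ltf{\fio}=\card{\Si}^2(1+o(1))$, gives $\qnormrat{\fio}\to-\tfrac{2^e}{3^e}\prod_j\sigma_j+2\prod_j\Omega(1/\sigma_j,0)$. The subtle point is the third effect: there are $\asymp\qi$ shifts $\vec a$, so bounding the fluctuations one at a time by Weil would lose a factor $(\log\qi)^{2e}$, while their total is genuinely of size $\qi^2$; one must instead evaluate $\sum_{\vec a}\card{\text{fluctuation}}^2$ by a second-moment computation in which the multiplicative characters weighting the fluctuations supply the cancellation, carried out uniformly in the field, character and translation. An equivalent device is to compare $f$ with the full-period character polynomial — one period of the perfect-autocorrelation coefficient sequence, whose off-peak periodic autocorrelations are exactly $-1$ and whose $\Lfour$ norm has an explicit value — and then to control the explicit correction terms by which the support differs from a full period.

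For \eqref{Karl}: write $g(\sigma):=\Omega(1/\sigma,0)$ and $L(\sigmas):=-\tfrac{2^e}{3^e}\prod_j\sigma_j+2\prod_j g(\sigma_j)$ on $(0,\infty)^e$. I would first record the properties of $g$: it is continuous and $C^1$, equals $1$ on $(0,1]$, equals $(3\sigma^2-4\sigma+2)/\sigma^2$ on $[1,2]$ (so $g(1)=1$, $g'(1)=0$), is piecewise rational on each $[m,m+1]$, and satisfies $g(\sigma)\geq\tfrac23\sigma$ with $g(\sigma)=\tfrac23\sigma+O(1)$ (the lower bound is Poisson summation applied to the squared triangle function). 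Hence $L\geq\tfrac{2^e}{3^e}\prod_j\sigma_j\geq0$ and $L\to+\infty$ as $\max_j\sigma_j\to\infty$, so $L$ attains its minimum. A minimizer has no coordinate $\sigma_i\leq1$ (there $\partial L/\partial\sigma_i=-\tfrac{2^e}{3^e}\prod_{k\neq i}\sigma_k<0$, since $g'(1^+)=0$) and none $\sigma_i\geq2$ (there $g'(\sigma_i)\geq\tfrac12$, which with $g(\sigma_k)\geq\tfrac23\sigma_k$ forces $\partial L/\partial\sigma_i>0$); so every minimizer lies in $(1,2)^e$. On that cube the stationarity equations $g'(\sigma_i)\prod_{k\neq i}g(\sigma_k)=\tfrac{2^{e-1}}{3^e}\prod_{k\neq i}\sigma_k$, divided in pairs, show that all $\sigma_i$ share a common value of $\psi(\sigma):=\sigma g'(\sigma)/g(\sigma)=4(\sigma-1)/(3\sigma^2-4\sigma+2)$; since $\psi$ is not injective on $(1,2)$ (it increases on $(1,1+1/\sqrt3)$ and decreases thereafter), I must then rule out the ``mixed'' critical points whose coordinates straddle the two branches, by a direct check of the remaining product equation. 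That done, every critical point in $(1,2)^e$ has the form $(\rho,\dots,\rho)$ with $\rho$ a critical point of $\ell(\sigma):=L(\sigma,\dots,\sigma)=-\tfrac{2^e}{3^e}\sigma^e+2g(\sigma)^e$; clearing denominators in $g'(\rho)g(\rho)^{e-1}=\tfrac{2^{e-1}}{3^e}\rho^{e-1}$ produces exactly the stated polynomial. A short analysis on $(1,2)$ (reducing, e.g., to unimodality of $\sigma\mapsto(\sigma-1)(3\sigma^2-4\sigma+2)^{e-1}/\sigma^{3e}$ and the inequality $\tfrac34(8/9)^e<1$) shows $\ell'$ has exactly one zero there, and a sign check ($>0$ at $\sigma=1$, $<0$ at $\sigma=1+\tfrac{3^{e+1}}{2^{2e+4}}$ and at $\sigma=2$) places that zero $\sigma^\ast$ in $(1,1+\tfrac{3^{e+1}}{2^{2e+4}})$. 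Thus $(\sigma^\ast,\dots,\sigma^\ast)$ is the unique critical point of $L$ in $(1,2)^e$, hence the global minimizer, which is \eqref{Karl}.

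I expect the main obstacle to be the fluctuation estimate in \eqref{Yakov}: the generic autocorrelation shifts are not negligible but instead carry the bulk of the main term, so a shift-by-shift Weil bound is off by a power of $\log\qi$ once the $\asymp\qi$ shifts are summed, and one must exploit cancellation \emph{across} shifts, in an $e$-dimensional box and uniformly in all the parameters. By comparison \eqref{Karl} is routine; its only delicate point is the exclusion of the mixed-branch critical points — forced on us because $\psi$ is not injective on $(1,2)$ — which is nonetheless elementary.
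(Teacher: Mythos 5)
The proposal is correct in outline for both parts but leaves the two hardest steps as acknowledged but unfilled gaps, and in part \eqref{Yakov} it takes a different route from the paper.

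For \eqref{Yakov}, you expand $\Lff{\fio}=\sum_{\vec a}\card{T_{\vec a}}^2$ via the aperiodic autocorrelations $T_{\vec a}$, recognize each $T_{\vec a}$ as an incomplete Jacobi-type sum over a box, and then flag the real difficulty: there are $\asymp\qi$ shifts, so a per-shift Weil bound loses $(\log\qi)^{2e}$, and one must extract cancellation \emph{across} shifts by a second-moment computation, which you do not carry out. That estimate is precisely where the content lives. The paper sidesteps the per-shift analysis entirely: the general Fourier-interpolation theorem in Section~\ref{Lawrence} introduces an extra averaging over all twists $\xi\in\grpchars$, so that (via Corollary~\ref{Nathan}) $\Lff{\fio}$ is rewritten in terms of $H(\kappa,\lambda,\mu,\nu)=\sum_\xi G(\xi\kappa,\chi)G(\xi\lambda,\chi)\conj{G(\xi\mu,\chi)G(\xi\nu,\chi)}$. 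Proposition~\ref{Walter} then isolates a diagonal main term $M$ (from $\{\kappa,\lambda\}=\{\mu,\nu\}$, plus the extra quadratic case) and an off-diagonal remainder $N$ bounded by $3q^2\sqrt{q}$ via Weil applied to a single one-variable character sum $\sum_x\chi\big((x+a)^{m-1}(x+b)^{m-1}(x+c)(x+d)\big)$. The resulting error term $E$ is then aggregated once and for all by Lemma~\ref{Xavier}, which controls $\sum_{\kappa,\lambda,\mu,\nu}\big|\sum_{a+b=c+d}\conj{\kappa'(a)\lambda'(b)}\mu'(c)\nu'(d)\big|$ directly and thus replaces exactly the across-shifts second-moment computation your sketch only gestures at. Your alternative device of comparing against the full-period character polynomial is closer to the older H\o holdt--Jensen computations but is, as stated, also only a plan.

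For \eqref{Karl}, your reduction to a one-variable problem matches the paper in spirit, and the sign checks, the identification of $\psi(\sigma)=\sigma g'(\sigma)/g(\sigma)=4(\sigma-1)/(3\sigma^2-4\sigma+2)$, and the derivation of the stated polynomial from $\ell'=0$ are all correct. But you correctly observe that $\psi$ is not injective on $(1,2)$ (it turns around at $1+1/\sqrt3$), note that ``mixed-branch'' critical points with coordinates on both sides must be excluded, and then leave this as a ``direct check'' without providing one. The paper closes this gap by a different and cleaner move: from the stationarity relation $2\,u(\sigma_k)\prod_j U(\sigma_j)=1$ with $U(x)=3\Phi(x)/(2x)$ strictly decreasing and $U(3)=19/18$, one gets $u(\sigma_k)\le \tfrac12(18/19)^e<\tfrac12$, hence every $\sigma_k$ lies in $(1,2-\sqrt{2/3})$, and on that subinterval $u=\psi$ \emph{is} strictly increasing, so equality of the $\sigma_k$ follows without any case analysis of mixed critical points. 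Without this narrowing (or an explicit replacement), your argument for \eqref{Karl} is incomplete.
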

\begin{theorem}\label{Earl}
Let $\pfam$ be a size- and translation-stable family, with limiting sizes $\sigmas$ and limiting translations $\taus$, of $e$-variable quadratic multiplicative character polynomials over fields $\fields$.
\begin{enumerate}[(i)]
\item\label{Nancy} As $\qi \to \infty$, we have
\[
\qnormrat{\fio} \to -\frac{2^{e+1}}{3^e} \prod_{j=1}^e \sigma_i + 2 \prod_{j=1}^e \Omega\left(\frac{1}{\sigma_j},0\right) + \prod_{j=1}^e \Omega\left(\frac{1}{\sigma_j},1+\frac{2 \tau_j}{\sigma_j}\right).
\]
\item\label{Ian} This limit is globally minimized if and only if $\sigmas$ all equal the unique root in $(1,1+\frac{3^{e+1}}{2^{2 e+3}})$ of $x^{3 e} -\frac{3^e}{2^{e-2}}(x-1) (3 x^2-4 x+2)^{e-1} - \frac{3^e}{2^{2 e}} (2x-1)^{2 e-1}$, and $\tau_j \in \{ \frac{1-2 \sigma_j}{4} + \frac{n}{2} : n \in \Z\}$ for each $j \in \eseg$.
\end{enumerate}
\end{theorem}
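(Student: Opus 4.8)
I would establish the two assertions separately: (\ref{Nancy}) by an exponential-sum evaluation, (\ref{Ian}) by optimization. Fix $\iota$ and abbreviate $p=\pio$, $q=\qi=p^{e}$, $f=\fio$, $S=\Si=\prod_{k=1}^{e}\{0,\dots,s_{k}-1\}$, $t=\ti$, $\alpha=\alphai\colon\Ze\to\Fq$, and let $\eta$ be the quadratic character of $\Fq$, extended by $\eta(0)=0$. Since $\Fq$ has additive exponent $p$ while $[\Ze:p\Ze]=p^{e}=\card{\Fq}$, one has $\ker\alpha=p\Ze$, so $\alpha(j+t)=0$ has only $O(\card{S}/q)=O(1)$ solutions $j\in S$; hence $\Ltt f=\card S+O(1)=\prod_{k}s_{k}+O(1)$, and therefore $\Ltf f=\bigl(\prod_{k}s_{k}\bigr)^{2}(1+o(1))\sim q^{2}\prod_{k}\sigma_{k}^{2}$. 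For the fourth power, start from $\Lff f=\sum_{j_{1}+j_{3}=j_{2}+j_{4},\,j_{i}\in S}\eta(x_{1}x_{2}x_{3}x_{4})$ with $x_{i}=\alpha(j_{i}+t)$ (the relation holds over $\Z$, hence in $\Fq$), and expand each character using the Gauss sum $\tau=\sum_{x}\eta(x)\psi(x)$ of $\Fq$ for a fixed nontrivial additive character $\psi$, via $\eta(x)=\tau^{-1}\sum_{b}\eta(b)\psi(bx)$ and $|\tau|^{2}=q$. This gives
\[
\Lff f=\frac{1}{q^{2}}\sum_{b_{1},b_{2},b_{3},b_{4}\in\Fq}\eta(b_{1}b_{2}b_{3}b_{4})\,\psi\!\bigl((b_{1}-b_{2}+b_{3}-b_{4})\alpha(t)\bigr)\prod_{k=1}^{e}Q_{k}(b_{1},b_{2},b_{3},b_{4}),
\]
where each $Q_{k}$ is an exponential sum over the one-dimensional constraint $j_{1}+j_{3}=j_{2}+j_{4}$ in $\{0,\dots,s_{k}-1\}$. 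The factorization over $k$ holds because $S$ is a box and the constraint is coordinatewise, and it is the origin of the products $\prod_{j}$ in the statement.

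The plan is to extract the dominant part of this sum. I expect the main contributions to come from the three ``pairing strata'' $\{b_{1}=b_{2},\,b_{3}=b_{4}\}$, $\{b_{1}=b_{3},\,b_{2}=b_{4}\}$, $\{b_{1}=b_{4},\,b_{2}=b_{3}\}$, on each of which $b_{1}b_{2}b_{3}b_{4}$ is a square, so that $\eta(b_{1}b_{2}b_{3}b_{4})=1$ whenever $b_{i}\neq0$; this forced value is precisely what distinguishes the quadratic case from the non-quadratic one of Theorem~\ref{Harold}. On each stratum, writing $h$ for the difference of the two coincident pairs and summing the remaining free variable over its $\sim q$ admissible values, the contribution becomes $\frac1q\sum_{h\in\Fq}\psi\bigl(h\alpha(r)\bigr)\prod_{k}\Phi_{k}(h)$ for an explicit $r\in\Ze$ depending on the stratum ($r=0$ for the first and third strata, $r=(2t_{1}+s_{1}-1,\dots,2t_{e}+s_{e}-1)$ for the second), where $\Phi_{k}(h)=\sum_{\delta\in\Z}\max(0,s_{k}-|\delta|)^{2}\exp(2\pi i\langle h\rangle_{k}\delta/p)$ and $b\mapsto\langle b\rangle$ is the isomorphism $\Fq\to(\Z/p\Z)^{e}$ dual to $\psi$ and $\alpha$. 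Expanding $\Phi_{k}$ and using the orthogonality relation $\sum_{\mu\in(\Z/p\Z)^{e}}\exp(2\pi i\,\mu\cdot m/p)=p^{e}[m\equiv0]$ collapses this to $\prod_{k}\sum_{n\in\Z}\max(0,s_{k}-|np-r_{k}|)^{2}$; dividing by $\bigl(\prod_{k}s_{k}\bigr)^{2}$ and letting $q\to\infty$, with $p/s_{k}\to1/\sigma_{k}$, $t_{k}/s_{k}\to\tau_{k}/\sigma_{k}$, and $\Omega$ continuous, sends the first and third strata each to $\prod_{j}\Omega(1/\sigma_{j},0)$ and the second to $\prod_{j}\Omega(1/\sigma_{j},1+2\tau_{j}/\sigma_{j})$. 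The three strata meet exactly along the diagonal $b_{1}=b_{2}=b_{3}=b_{4}$, whose normalized contribution is $\tfrac{2^{e}}{3^{e}}\prod_{j}\sigma_{j}$; as it is thereby counted three times but should be counted once, removing it twice yields the term $-\tfrac{2^{e+1}}{3^{e}}\prod_{j}\sigma_{j}$. Summing the three strata and applying this correction produces exactly the value $L$ asserted in (\ref{Nancy}).

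The hard part will be proving that everything off the three strata contributes $o\bigl((\prod_{k}s_{k})^{2}\bigr)$. When $b_{1}b_{2}b_{3}b_{4}$ is a non-square the factor $\eta(b_{1}b_{2}b_{3}b_{4})$ oscillates, and when it is a square but $(b_{1},\dots,b_{4})$ lies off the strata the resonance defining some $Q_{k}$ breaks, so that $\prod_{k}Q_{k}$ is far below its maximal order $\prod_{k}s_{k}^{3}$. In either regime one bounds the sum over $(b_{1},\dots,b_{4})$ by Weil's estimates for the resulting complete exponential sums---Gauss, Kloosterman, and (because $\eta$ is quadratic, so that some of them degenerate) Sali\'e sums, each of size $O(\sqrt q)$---combined with the bound $\sum_{c}|\widehat{\mathbf{1}}_{\mathrm{box}}(c)|=O\bigl(q(\log q)^{e}\bigr)$ on the Fourier transform of a box indicator in $(\Z/p\Z)^{e}$. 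The logarithmic loss that a crude summand-by-summand bound on the autocorrelations would incur upon squaring is avoided by invoking Parseval within the relevant second-moment sum. A separate, elementary argument treats the mild wrap-around that occurs when $s_{k}>p$ (unavoidable near the optimum, where $\sigma_{k}>1$); it is exactly this wrap-around that makes the bi-infinite sums $\sum_{n\in\Z}$ in $\Omega$ appear.

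For (\ref{Ian}), the limit is $L=-\tfrac{2^{e+1}}{3^{e}}\prod_{j}\sigma_{j}+2\prod_{j}\Omega(1/\sigma_{j},0)+\prod_{j}\Omega(1/\sigma_{j},1+2\tau_{j}/\sigma_{j})$. First I would minimize over the $\tau_{j}$: since $y\mapsto\Omega(1/\sigma_{j},y)$ has period $1/\sigma_{j}$ and is symmetric about both $y=0$ and $y=\tfrac1{2\sigma_{j}}$, and a direct check shows that (for $\sigma_{j}$ in the relevant range) it decreases from $y=0$ to $y=\tfrac1{2\sigma_{j}}$, its minimum over $y$ is attained exactly when $1+2\tau_{j}/\sigma_{j}\equiv\tfrac1{2\sigma_{j}}\pmod{1/\sigma_{j}}$, i.e.\ $\tau_{j}\in\{\tfrac{1-2\sigma_{j}}{4}+\tfrac n2:n\in\Z\}$---the condition in (\ref{Ian})---where $\Omega(1/\sigma_{j},1+2\tau_{j}/\sigma_{j})=\tfrac{(2\sigma_{j}-1)^{2}}{2\sigma_{j}^{2}}$; in that range one also has $\Omega(1/\sigma_{j},0)=\tfrac{3\sigma_{j}^{2}-4\sigma_{j}+2}{\sigma_{j}^{2}}$. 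Substituting reduces matters to minimizing a product expression in $\sigma_{1},\dots,\sigma_{e}$, which a symmetrization argument (replacing an extreme pair $\sigma_{a},\sigma_{b}$ by their geometric mean, using convexity of the relevant one-variable factors) pushes onto the diagonal $\sigma_{1}=\dots=\sigma_{e}=\sigma$, where the objective is $M(\sigma)=-\tfrac{2^{e+1}}{3^{e}}\sigma^{e}+2\bigl(\tfrac{3\sigma^{2}-4\sigma+2}{\sigma^{2}}\bigr)^{e}+\tfrac1{2^{e}}\bigl(\tfrac{(2\sigma-1)^{2}}{\sigma^{2}}\bigr)^{e}$. Clearing the factor $\tfrac{e}{\sigma^{2e+1}}$ from $M'(\sigma)$ leaves a nonzero constant times $x^{3e}-\tfrac{3^{e}}{2^{e-2}}(x-1)(3x^{2}-4x+2)^{e-1}-\tfrac{3^{e}}{2^{2e}}(2x-1)^{2e-1}$ evaluated at $x=\sigma$; a sign analysis then shows this polynomial has a unique zero in $\bigl(1,1+\tfrac{3^{e+1}}{2^{2e+3}}\bigr)$, at which $M$ attains its global minimum. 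That would complete the proof.
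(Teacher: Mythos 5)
Your sketch of part (\ref{Nancy}) is essentially the paper's argument in less structured form: expand the quadratic character via Gauss sums (the paper packages this as Corollary~\ref{Nathan} together with Proposition~\ref{Walter}), identify the three pairing strata as main terms $A$, $B$, $C$ with a $2D$ correction for triple-counting the diagonal, and estimate the rest by Weil plus a bound on character sums over a box (the paper's Lemma~\ref{Xavier}; the bound has the same $O(q(\log q)^{O(1)})$ shape you cite, though the paper bounds $|H-M|$ via complete sums $\sum_x\chi((x+a)^{m-1}(x+b)^{m-1}(x+c)(x+d))$ rather than Kloosterman/Sali\'e sums, and uses no second-moment/Parseval step). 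So part (\ref{Nancy}) is fine in spirit, if thin on the error-term details.

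Part (\ref{Ian}) has a genuine gap. After substituting the explicit small-$\sigma$ formulas $\Phi(\sigma)=\Omega(1/\sigma,0)=(3\sigma^2-4\sigma+2)/\sigma^2$ and $\Psi(\sigma)=\Omega(1/\sigma,1/(2\sigma))=(2\sigma-1)^2/(2\sigma^2)$, you propose to force $\sigma_1=\cdots=\sigma_e$ by a geometric-mean symmetrization ``using convexity of the relevant one-variable factors.'' But $\log\Psi$ is \emph{concave} as a function of $\log\sigma$: writing $u=1/\sigma$, one finds $\frac{d}{dt}\log\Psi(e^t)=\frac{2u-u^2}{2-2u+u^2/2}$, whose $u$-derivative has numerator $(2-u)^2\geq 0$; since $u$ is decreasing in $t$, this makes $\log\Psi(e^t)$ concave, so along $\prod_j\sigma_j=\text{const}$ the factor $\prod_j\Psi(\sigma_j)$ is \emph{maximized}, not minimized, at the diagonal. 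The two main products therefore pull in opposite directions under symmetrization, and the argument does not close. (Even before this, the simple formulas for $\Phi$ and $\Psi$ only hold on a bounded range of $\sigma$, and you would need to establish a priori that a global minimizer exists and lies in that range.) The paper avoids this entirely: it first confines the minimizer to a small interval by monotonicity bounds and the coercivity estimate $\Lambda\geq\prod\Phi(\sigma_j)$ (Lemma~\ref{Laura}), then writes the first-order condition as $u(\sigma_k)\prod_j U(\sigma_j)+\tfrac12 v(\sigma_k)\prod_j V(\sigma_j)=1$ for all $k$ and proves this equation in $\sigma_k$ has at most one solution on the relevant interval (via a derivative lower bound for $Au(x)+\tfrac12 Bv(x)$), which forces equality of the $\sigma_k$'s without any log-convexity. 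You would need to replace the symmetrization with an argument of this kind.

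Your identification of the optimal $\tau_j$ and the values of $\Phi$, $\Psi$, as well as the resulting polynomial criterion for $d\Theta/dx=0$, all agree with the paper; the paper additionally checks $d^2T/dx^2>0$ on the target interval to justify uniqueness of the root, a step you would still need.
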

These new theorems are much more general than the compositum of all previous results on the limiting ratio of $\Lfour$ to $\Ltwo$ norm for character polynomials \cite{Hoholdt-Jensen, Jensen-Hoholdt, Jensen-Jensen-Hoholdt, Borwein-Choi-2000, Schmidt, Jedwab-Katz-Schmidt-a, Jedwab-Katz-Schmidt-b}, and reveal for the first time the full functional form of the asymptotic ratio of norms as it depends on choice of character, limiting size, and limiting translation, thus enabling us to find multivariable Littlewood polynomials with lower asymptotic $\normrat{f}$ than any known hitherto.

For each $e \geq 1$, let $A_e$ (resp., $B_e$) be the minimum asymptotic ratio of norms for a family of $e$-variable non-quadratic (resp., quadratic) character polynomials as described in Theorem \ref{Harold}\eqref{Karl} (resp., Theorem \ref{Earl}\eqref{Ian}).
Note that $A_1$ is also the minimum asymptotic ratio of norms achievable by a family of additive character polynomials as described in Theorem \ref{Gabriel}\eqref{James}.
Rational approximations of $B_1^4$, $B_2^4$, $\ldots$, $B_5^4$ are obtained later in Lemma \ref{Henry}, and if desired, a computer may be used to obtain more accurate approximations of values of various $A_e$ and $B_e$.
For each $e\geq 1$, $B_e$ is to date the lowest known asymptotic $\normrat{f}$ for a family of $e$-variable Littlewood\footnote{Quadratic character polynomials are not always Littlewood because the extended quadratic character $\chi$ has $\chi(0)=0$, so we replace each coefficient of $0$ thus produced with a $1$, and Corollary \ref{Oswald} shows that this has no effect on the asymptotic ratio of norms.\label{Eric}} polynomials $f(z_1,\ldots,z_e)$ in the limit as $\deg_{z_1}(f),\ldots,\deg_{z_e}(f) \to \infty$.
For $e=1$, this recapitulates the result of Jedwab-Katz-Schmidt \cite[Corollary 3.2]{Jedwab-Katz-Schmidt-a}, while for $e > 1$, the ratio obtained here is strictly lower than any found to date.
Until now, the smallest known asymptotic ratio has been whatever can be obtained from the best univariate polynomials and the product construction $||f(z_1) \cdots f(z_e)||_r=||f(z)||_r^e$, and so we are claiming that $B_e < B_1^e$ for every $e > 1$.
This will give our main result, Theorem \ref{John}, but in fact we prove something more general: one always obtains a lower asymptotic ratio of norms with a single optimal family of quadratic character polynomials than one does using the product construction with two or more families of character polynomials (which could draw coefficients from $\{-1,1\}$ or a larger set, depending on the characters involved).
\begin{theorem}\label{Orlando}
For each $e\geq 1$, let $A_e$ (resp., $B_e$) be the minimum asymptotic ratio of $\Lfour$ to $\Ltwo$ norm achievable by families of $e$-variable non-quadratic (resp., quadratic) multiplicative character polynomials as described in Theorem \ref{Harold}\eqref{Karl} (resp., Theorem \ref{Earl}\eqref{Ian}).
Then $B_e < A_e$ for every $e \geq 1$ and $B_{e_1+e_2} < B_{e_1} B_{e_2}$ for every $e_1,e_2 \geq 1$.
\end{theorem}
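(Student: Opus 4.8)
The plan is to treat the two inequalities separately, in each case by evaluating the asymptotic value of $\Lff{f}/\Ltf{f}$ for a carefully chosen quadratic family and comparing it with the competing quantity. For $\sigma \in (1,\tfrac32)$ set
\[
\lambda(\sigma)=\tfrac23\sigma, \qquad \mu(\sigma)=\Omega\!\left(\tfrac1\sigma,0\right)=1+2\left(1-\tfrac1\sigma\right)^{2}, \qquad \nu(\sigma)=\frac{(2\sigma-1)^{2}}{2\sigma^{2}},
\]
where $\nu(\sigma)$ is the value of $\Omega(\tfrac1\sigma,1+\tfrac{2\tau}{\sigma})$ when $\tau$ is one of the optimal translations of Theorem~\ref{Earl}\eqref{Ian} (these closed forms for $\mu$ and $\nu$ hold because $\sigma<\tfrac32<2$). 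By Theorems~\ref{Harold} and~\ref{Earl}, a size-stable non-quadratic family in $e$ variables with all limiting sizes equal to $\sigma$ has asymptotic $\Lff{f}/\Ltf{f}=-\lambda(\sigma)^{e}+2\mu(\sigma)^{e}$, while a size- and translation-stable quadratic family in $e$ variables with all limiting sizes equal to $\sigma$ and optimal limiting translations has asymptotic $\Lff{f}/\Ltf{f}=-2\lambda(\sigma)^{e}+2\mu(\sigma)^{e}+\nu(\sigma)^{e}$. The one preliminary fact used throughout is
\[
\nu(\sigma)<\lambda(\sigma)\qquad\text{for all }\sigma\ge 1,
\]
which is equivalent to $4\sigma^{3}-12\sigma^{2}+12\sigma-3>0$, true because the left side has derivative $12(\sigma-1)^{2}\ge 0$ and value $1$ at $\sigma=1$.

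For $B_e<A_e$: let $\sigma_{*}$ be the non-quadratic optimizer of Theorem~\ref{Harold}\eqref{Karl}, so $A_e^{4}=-\lambda(\sigma_{*})^{e}+2\mu(\sigma_{*})^{e}$; since $\sigma_{*}<1+\tfrac{3^{e+1}}{2^{2e+4}}<\tfrac32$, the closed forms above apply. As $B_e^{4}$ is a minimum over quadratic families, it is at most the asymptotic value of the quadratic family with all limiting sizes $\sigma_{*}$ and optimal translations, namely
\[
-2\lambda(\sigma_{*})^{e}+2\mu(\sigma_{*})^{e}+\nu(\sigma_{*})^{e}=A_e^{4}-\bigl(\lambda(\sigma_{*})^{e}-\nu(\sigma_{*})^{e}\bigr)<A_e^{4}
\]
by the preliminary fact. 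Taking fourth roots gives $B_e<A_e$.

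For $B_{e_{1}+e_{2}}<B_{e_{1}}B_{e_{2}}$: let $\alpha_{e}$ be the quadratic optimizer of Theorem~\ref{Earl}\eqref{Ian} in $e$ variables, so $\alpha_{e}<1+\tfrac{3^{e+1}}{2^{2e+3}}<\tfrac32$, and put $L_{i}=\lambda(\alpha_{e_{i}})^{e_{i}}$, $M_{i}=\mu(\alpha_{e_{i}})^{e_{i}}$, $N_{i}=\nu(\alpha_{e_{i}})^{e_{i}}$, so that $B_{e_{i}}^{4}=-2L_{i}+2M_{i}+N_{i}$. Because $B_{e_{1}+e_{2}}^{4}$ is a minimum over $(e_{1}+e_{2})$-variable quadratic families, it is at most the asymptotic value
\[
\Psi:=-2L_{1}L_{2}+2M_{1}M_{2}+N_{1}N_{2}
\]
of the family whose first $e_{1}$ limiting sizes are $\alpha_{e_{1}}$, whose last $e_{2}$ limiting sizes are $\alpha_{e_{2}}$, and whose limiting translations are optimal. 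Multiplying out and setting $Q_{i}=M_{i}-L_{i}$ yields the identity
\[
B_{e_{1}}^{4}B_{e_{2}}^{4}-\Psi=2\Bigl[\,Q_{1}Q_{2}-Q_{1}(L_{2}-N_{2})-Q_{2}(L_{1}-N_{1})\,\Bigr].
\]
Here $Q_{i}>0$ since $\alpha_{e_{i}}<\tfrac32$ forces $L_{i}<1<M_{i}$, and $L_{i}-N_{i}>0$ by the preliminary fact; dividing by $2Q_{1}Q_{2}>0$, the inequality $\Psi<B_{e_{1}}^{4}B_{e_{2}}^{4}$ becomes $r(e_{1})+r(e_{2})<1$, where
\[
r(e):=\frac{\lambda(\alpha_{e})^{e}-\nu(\alpha_{e})^{e}}{\mu(\alpha_{e})^{e}-\lambda(\alpha_{e})^{e}}>0 .
\]
Thus it suffices to show $r(e)<\tfrac12$, i.e.\ $3\lambda(\alpha_{e})^{e}<\mu(\alpha_{e})^{e}+2\nu(\alpha_{e})^{e}$, for every $e\ge1$. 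For $e=1$, using $\mu(\alpha_{1})+2\nu(\alpha_{1})=(7\alpha_{1}^{2}-8\alpha_{1}+3)/\alpha_{1}^{2}$, this reduces to $(\alpha_{1}-1)^{2}(2\alpha_{1}-3)<0$, which holds since $1<\alpha_{1}<\tfrac32$. For $e\ge4$, $\alpha_{e}<\tfrac98$ gives $3\lambda(\alpha_{e})^{e}<3(\tfrac34)^{e}<1<\mu(\alpha_{e})^{e}$. For $e\in\{2,3\}$, one pins $\alpha_{e}$ down inside an interval of length $\tfrac1{100}$ by evaluating the defining polynomial of Theorem~\ref{Earl}\eqref{Ian} at the endpoints (the root being unique in the larger interval) and then verifies the inequality using the monotonicity of $\lambda,\mu,\nu$ there. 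Hence $B_{e_{1}+e_{2}}^{4}\le\Psi<B_{e_{1}}^{4}B_{e_{2}}^{4}$, and taking fourth roots completes the proof.

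The main obstacle is the sharpness of the product inequality: already for $e_{1}=e_{2}=1$ the gap $B_{1}^{8}-\Psi$ is only a small fraction of a percent, so the argument can carry no asymptotic slack. The decisive observation is that the whole inequality collapses to $r(e)<\tfrac12$, that for $e=1$ this is precisely the elementary condition $\alpha_{1}<\tfrac32$ already guaranteed by the location of the optimizer, and that the remaining values of $e$ split neatly into a tiny finite check ($e=2,3$) and a comfortable estimate ($e\ge4$).
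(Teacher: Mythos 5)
Your proof of $B_e<A_e$ is essentially the paper's: compare the quadratic and non-quadratic objective functions at the same argument, reduce to $\Omega(\tfrac1x,\tfrac1{2x})<\tfrac{2x}3$, i.e.\ $4x^3-12x^2+12x-3>0$ on $[1,\tfrac32]$, then evaluate the quadratic objective at the non-quadratic optimizer. Nothing to add there.

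Your proof of the product inequality $B_{e_1+e_2}<B_{e_1}B_{e_2}$ is, by contrast, a genuinely different route, and an attractive one. The paper proceeds through Lemma~\ref{Henry}: it brackets $B_1^4,\ldots,B_5^4$ numerically by evaluating $T$ at hand-chosen rational triples, dispatches the case $\max(e_1,e_2)\ge5$ via $B_{e_1}B_{e_2}>B_1B_5>\sqrt[4]{2}>B_{e_1+e_2}$, and handles $1\le e_1\le e_2\le4$ by the chain $\frac{B_1}{B_0}>\frac{B_2}{B_1}>\cdots>\frac{B_5}{B_4}$. You instead upper-bound $B_{e_1+e_2}^4$ by the value $\Psi$ at a \emph{mixed} $(e_1+e_2)$-variable family whose first $e_1$ limiting sizes equal the $e_1$-variable optimizer and last $e_2$ equal the $e_2$-variable optimizer, and then observe the clean algebraic identity
\[
B_{e_1}^4B_{e_2}^4-\Psi = 2\bigl[Q_1Q_2-Q_1(L_2-N_2)-Q_2(L_1-N_1)\bigr],\qquad Q_i=M_i-L_i>0,
\]
which, after dividing by $2Q_1Q_2$, turns the whole statement into the one-parameter condition $r(e_1)+r(e_2)<1$ with $r(e)=(\lambda^e-\nu^e)/(\mu^e-\lambda^e)$ evaluated at $\alpha_e$. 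I have checked that identity and the closed forms for $\mu,\nu$ on $(1,\tfrac32)$; they are right, and $L_i<1<M_i$ and $\lambda>\nu$ hold on the relevant range, so the reduction to $r(e)<\tfrac12$ is valid. Your $e=1$ case is a pleasant surprise: $3\lambda<\mu+2\nu$ factors as $(\alpha_1-1)^2(2\alpha_1-3)<0$, so it is automatic from $\alpha_1<\tfrac32$ with no numerics at all. Your $e\ge4$ estimate $3(\tfrac34)^e<1\le\mu^e$ is correct since $\alpha_e<1+\tfrac{3^{e+1}}{2^{2e+3}}<\tfrac98$ for $e\ge4$. What each approach buys: the paper's argument needs numerical work for five values of $e$ but is completely self-contained in Lemma~\ref{Henry}; yours needs numerical work only for $e\in\{2,3\}$ and makes the mechanism of the inequality transparent via $r(e)<\tfrac12$.

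The one thing you have not actually supplied is the $e\in\{2,3\}$ verification. You describe the method (pin $\alpha_e$ to a short interval by sign changes of the defining polynomial from Theorem~\ref{Earl}\eqref{Ian}, then use monotonicity of $\lambda,\mu,\nu$ on that interval to bound $r(e)$) but do not give the intervals or the endpoint evaluations. This is a deferred computation, not an error --- the inequalities do hold, with room to spare (for instance $\alpha_2\approx\tfrac{17}{16}$ gives $3\lambda^2\approx1.50$ against $\mu^2+2\nu^2\approx1.64$, and $\alpha_3\approx\tfrac{20}{19}$ gives $3\lambda^3\approx1.04$ against $\mu^3+2\nu^3\approx1.35$) --- but a complete write-up should exhibit the bracketing intervals and endpoint values explicitly, just as the paper does in Lemma~\ref{Henry} by naming its triples $(x_1,x_2,x_3)$.
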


\subsection{Organization of this Paper}

To prove Theorems \ref{Gabriel}--\ref{Earl}, we first establish a general theorem for obtaining the $\Lfour$ norm of a polynomial from its Fourier interpolation in Section \ref{Lawrence}, after setting down notational conventions in Section \ref{William}.
Our general theorem reduces the problem of computing $\Lfour$ norms of character polynomials to a pair of calculations (one for additive and one for multiplicative characters) involving Gauss sums, which are presented in Section \ref{Odysseus}.
We use these in Section \ref{Tamar} to prove Theorems \ref{Gabriel}\eqref{Fabian}, \ref{Harold}\eqref{Yakov}, and \ref{Earl}\eqref{Nancy}.
These respectively imply Theorems \ref{Gabriel}\eqref{James}, \ref{Harold}\eqref{Karl}, and \ref{Earl}\eqref{Ian}, but showing this demands delicate arguments which are sketched in Section \ref{Valerie}.
We prove Theorem \ref{Orlando} in Section \ref{Albert}.
Some technical results used in Sections \ref{Tamar} and \ref{Valerie} are collected and proved in the Appendix.

\section{Notation and Conventions}\label{William}

For the rest of this paper $p$ is a prime, and $q=p^e$ with $e$ a positive integer.
For any group $\grp$, we use $\grpchars$ to denote the group of characters from $\grp$ into $\C$: thus $\achars$ is the group of additive characters from $\Fq$ to $\C$ and $\mchars$ the group of multiplicative characters from $\Fqu$ to $\C$.
We extend any nontrivial $\chi \in \mchars$ so that $\chi(0)=0$.

To write the multiplicative character polynomial \eqref{Raymond} compactly, we use the convention that if $j=(j_1,\ldots,j_e) \in \Ze$, the notation $z^j$ is a shorthand for $z_1^{j_1} \cdots z_e^{j_e}$.
To make it easier to speak about supports of character polynomials \eqref{Samson} and \eqref{Raymond}, we call a finite set of consecutive integers a {\it segment}, and a finite Cartesian product of segments a {\it box}.
If $S$ is a subset of $\Z^n$ and $t \in \Z^n$, then $S+t$ is the translated subset $\{s+t: s \in S\}$.

\section{$\Lfour$ Norms via the Fourier Transform}\label{Lawrence}

If $\grp$ is a finite abelian group and $\{F_g\}_{g \in \grp}$ is a family of complex numbers, then for any $\eta\in\grpchars$, we have the Fourier transform
\[
\ft{F}_\eta = \sum_{g \in \grp} F_g \eta(g),
\]
with inverse
\[
F_g = \frac{1}{\grpcard} \sum_{\eta \in \grpchars} \ft{F}_\eta \conj{\eta(g)}.
\]
We express the $\Lfour$ norm in terms of the Fourier interpolation.
\begin{theorem}
Let $\grp$ be a finite abelian group, $\{F_g\}_{g \in \grp}$ a family of complex numbers, $n$ a positive integer, and $\pi\in\Hom(\Z^n,\grp)$.
For any $\eta\in\grpchars$, let $\etaprime \in \Znchars$ be $\eta \circ \pi$.
If $U$ is a finite subset of $\Zn$ and $F(z)=\sum_{u \in U} F_{\pi(u)} z^u \in \C[z_1,\ldots,z_n]$, then
\[
\Lff{F}  = \frac{1}{\grpcard^5} \sums{a,b,c,d \in U \\ a+b=c+d} \,\, \sum_{\kappa,\lambda,\mu,\nu \in \grpchars} \conj{\kappaprime(a) \lambdaprime(b)} \muprime(c) \nuprime(d) H(\kappa,\lambda,\mu,\nu),
\]
where
\[
H(\kappa,\lambda,\mu,\nu) = \sum_{\xi \in \grpchars} \ft{F}_{\xi\kappa} \ft{F}_{\xi\lambda} \conj{\ft{F}_{\xi\mu} \ft{F}_{\xi\nu}}.
\]
\end{theorem}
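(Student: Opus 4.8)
The plan is to compute $\Lff{F}$ directly from its definition as an integral over the $n$-torus, applying orthogonality of characters twice: once for the characters $\theta \mapsto \exp(i m \theta)$ of the circle, to carry out the integration, and once for the characters of the finite abelian group $\grp$, to replace the coefficients $F_{\pi(\cdot)}$ by the Fourier data $\ft F$. A concluding averaging step over an auxiliary character of $\grp$ manufactures the quantity $H(\kappa,\lambda,\mu,\nu)$ and is responsible for the fifth power of $\grpcard$.

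First I would write $|F(z)|^4 = F(z)^2 \conj{F(z)}^2$, which holds whenever $z_1, \ldots, z_n$ lie on the unit circle, and multiply out the four copies of $F$ using the given expansion; writing $z^m = z_1^{m_1} \cdots z_n^{m_n}$ for $m \in \Zn$ (so that $z^{-m} = \conj{z^m}$ on the torus) this gives
\[
|F(z)|^4 = \sums{a,b,c,d \in U} F_{\pi(a)} F_{\pi(b)} \conj{F_{\pi(c)} F_{\pi(d)}} \, z^{a+b-c-d}.
\]
Integrating term by term over $[0,2\pi]^n$ with $z_k = \exp(i\theta_k)$, and using that $(2\pi)^{-n} \int_{[0,2\pi]^n} z^m \, d\theta$ equals $1$ if $m = 0$ and $0$ otherwise, annihilates every term except those with $a+b = c+d$, so that
\[
\Lff{F} = \sums{a,b,c,d \in U \\ a+b=c+d} F_{\pi(a)} F_{\pi(b)} \conj{F_{\pi(c)} F_{\pi(d)}}.
\]
Then I would substitute the inverse Fourier transform $F_g = \grpcard^{-1} \sum_{\eta \in \grpchars} \ft F_\eta \conj{\eta(g)}$ into each of the four factors, naming the four summation characters $\kappa, \lambda, \mu, \nu$ and using $\eta(\pi(u)) = \etaprime(u)$; this introduces a factor $\grpcard^{-4}$ and rewrites the summand as $\sum_{\kappa,\lambda,\mu,\nu} \ft F_\kappa \ft F_\lambda \conj{\ft F_\mu \ft F_\nu} \, \conj{\kappaprime(a) \lambdaprime(b)} \muprime(c) \nuprime(d)$.

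The one step that is not a routine manipulation — and which I expect to be the crux — is the insertion of a redundant average over an auxiliary character $\xi \in \grpchars$. Since each $\etaprime$ is a group homomorphism into the nonzero complex numbers and $a+b = c+d$, we have $\xiprime(a) \xiprime(b) = \xiprime(a+b) = \xiprime(c+d) = \xiprime(c) \xiprime(d)$, and therefore
\[
\conj{(\xi\kappa)'(a)(\xi\lambda)'(b)} (\xi\mu)'(c)(\xi\nu)'(d) = \conj{\kappaprime(a) \lambdaprime(b)} \muprime(c) \nuprime(d);
\]
in other words, the coefficient multiplying $\ft F_\kappa \ft F_\lambda \conj{\ft F_\mu \ft F_\nu}$ is unchanged by the simultaneous twist $(\kappa,\lambda,\mu,\nu) \mapsto (\xi\kappa, \xi\lambda, \xi\mu, \xi\nu)$. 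Averaging this identity over $\xi \in \grpchars$, interchanging the finitely many sums, and reindexing $\kappa \mapsto \xi^{-1}\kappa$ and likewise for $\lambda, \mu, \nu$, replaces $\ft F_\kappa \ft F_\lambda \conj{\ft F_\mu \ft F_\nu}$ by $\ft F_{\xi^{-1}\kappa} \ft F_{\xi^{-1}\lambda} \conj{\ft F_{\xi^{-1}\mu} \ft F_{\xi^{-1}\nu}}$ while contributing one further factor of $\grpcard^{-1}$; summing the latter over $\xi$ (after the harmless substitution $\xi \mapsto \xi^{-1}$) is exactly $H(\kappa,\lambda,\mu,\nu)$. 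Collecting $\grpcard^{-4} \cdot \grpcard^{-1} = \grpcard^{-5}$ gives the stated formula. Apart from this twist, the only points needing attention are the bookkeeping of which of $a,b,c,d$ sit under the conjugation bar, and the legitimacy of interchanging sums — immediate, since $U$ and $\grp$ are finite.
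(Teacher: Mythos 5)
Your proposal is correct and is essentially the paper's own argument: both start from $\Lff{F} = \sums{a+b=c+d} F_{\pi(a)} F_{\pi(b)} \conj{F_{\pi(c)} F_{\pi(d)}}$, substitute the inverse Fourier transform into the four factors, and then exploit the identity $\conj{\xiprime(a)\xiprime(b)}\xiprime(c)\xiprime(d)=1$ (valid under $a+b=c+d$) together with a reindexing of the character sum and an average over $\xi\in\grpchars$ to produce $H$ and the extra factor of $\grpcard^{-1}$. The only cosmetic difference is the order of operations — the paper reindexes $\kappa\mapsto\xi\kappa$ first and then strips off the resulting $\xi'$ factors, whereas you first insert a redundant average of the $\xi$-twisted coefficient and then reindex and substitute $\xi\mapsto\xi^{-1}$ — but these are logically identical.
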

\begin{proof}
By the definition of the $\Lfour$ norm, we have
\[
\Lff{F}  = \sums{a,b,c,d \in U \\ a+b=c+d} F_{\pi(a)} F_{\pi(b)} \conj{F_{\pi(c)} F_{\pi(d)}},
\]
and thus, using the inverse Fourier transform
\[
\Lff{F}  = \frac{1}{\grpcard^4} \sums{a,b,c,d \in U \\ a+b=c+d} \,\, \sum_{\kappa,\lambda,\mu,\nu \in \grpchars} \ft{F}_\kappa \ft{F}_\lambda \conj{\ft{F}_\mu \ft{F}_\nu \kappaprime(a) \lambdaprime(b)} \muprime(c) \nuprime(d).
\]
Since we are summing $\kappa$ over all $\grpchars$, we can replace $\kappa$ by $\xi\kappa$ for any given $\xi\in\grpchars$, and also do likewise with $\lambda$, $\mu$, $\nu$ to obtain
\[
\Lff{F}  = \frac{1}{\grpcard^4} \sums{a,b,c,d \in U \\ a+b=c+d} \,\, \sum_{\kappa,\lambda,\mu,\nu \in \grpchars} \ft{F}_{\xi\kappa} \ft{F}_{\xi\lambda} \conj{\ft{F}_{\xi\mu} \ft{F}_{\xi\nu} \kappaprime(a) \lambdaprime(b)} \muprime(c) \nuprime(d)
\]
where we have omitted mention of the resulting factor of $\conj{\xiprime(a)\xiprime(b)}\xiprime(c)\xiprime(d)$, which equals $1$ in view of the constraint in the first summation.
Now sum $\xi$ over $\grpchars$ and divide by $\grpcard=|\grpchars|$ to finish.
\end{proof}
We apply this general theorem to additive and multiplicative character polynomials in two corollaries below.
Such polynomials have Gauss sums as their Fourier coefficients, so for any $\psi\in\achars$ and $\chi\in\mchars$, we define the {\it Gauss sum} associated with $\psi$ and $\chi$ to be
\[
G(\psi,\chi)=\sum_{a \in \Fqu} \psi(a) \chi(a).
\]
\begin{corollary}\label{Martin}
If $f(z)$ is an additive character polynomial with character $\psi\in\achars$, support $S$, translation $t$, and arrangement $\alpha$, then
\[
\Lff{f} = \frac{1}{(q-1)^5} \sums{a,b,c,d \in S+t \\ a+b=c+d} \,\, \sum_{\kappa,\lambda,\mu,\nu \in \mchars} \conj{\kappaprime(a) \lambdaprime(b)} \muprime(c) \nuprime(d) H(\kappa,\lambda,\mu,\nu)
\]
where for any $\eta\in\mchars$, we let $\etaprime\in\Zchars$ be $\eta\circ\alpha$, and
\[
H(\kappa,\lambda,\mu,\nu) = \sum_{\xi \in \mchars} G(\psi,\xi\kappa) G(\psi,\xi\lambda) \conj{G(\psi,\xi\mu) G(\psi,\xi\nu)}.
\]
\end{corollary}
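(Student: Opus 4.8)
The plan is to deduce this from the general theorem of this section after a change of index. The polynomial $f(z)=\sum_{j\in S}\psi(\alpha(j+t))z^j$ is not literally of the form required there, because the coefficient of $z^j$ depends on $j$ only through $\alpha(j+t)$, and $j\mapsto\alpha(j+t)$ is an affine map of $\Z$ into $\Fqu$ rather than a homomorphism. To remedy this I would substitute $u=j+t$, writing
\[
f(z)=\sum_{u\in S+t}\psi(\alpha(u))z^{u-t}=z^{-t}F(z),\qquad F(z):=\sum_{u\in S+t}\psi(\alpha(u))z^u,
\]
where $F$ is a (Laurent) polynomial to which the general theorem applies just as well. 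Since $|z^{-t}|=1$ whenever $|z|=1$, we have $\Lff{f}=\Lff{F}$, so it suffices to compute $\Lff{F}$.

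I would then invoke the general theorem for $F$ with $\grp=\Fqu$ (so that $\grpcard=q-1$ and $\grpchars=\mchars$), with $n=1$, with $\pi=\alpha\in\Hom(\Z,\Fqu)$, with finite index set $U=S+t\subseteq\Z$, and with the family $\{F_g\}_{g\in\Fqu}$ defined by $F_g=\psi(g)$, the restriction of the additive character to the nonzero elements. Because $\alpha$ takes values in $\Fqu$ we indeed have $F(z)=\sum_{u\in U}F_{\pi(u)}z^u$, and for $\eta\in\mchars$ the induced character $\etaprime\in\Zchars$ is $\eta\circ\pi=\eta\circ\alpha$, exactly as in the statement. The Fourier transform of this family is precisely a Gauss sum,
\[
\ft{F}_\eta=\sum_{g\in\Fqu}F_g\,\eta(g)=\sum_{g\in\Fqu}\psi(g)\eta(g)=G(\psi,\eta),
\]
so substituting $\ft{F}_\xi=G(\psi,\xi)$ and its twists into the expression for $H$ in the general theorem gives $H(\kappa,\lambda,\mu,\nu)=\sum_{\xi\in\mchars}G(\psi,\xi\kappa)G(\psi,\xi\lambda)\conj{G(\psi,\xi\mu)G(\psi,\xi\nu)}$, and substituting into the conclusion of the general theorem (with $\grpcard^5=(q-1)^5$) yields exactly the claimed identity.

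There is no real obstacle here: the one step that requires any care is the reindexing, which replaces the affine map $j\mapsto\alpha(j+t)$ by the honest homomorphism $\alpha$ at the cost of a monomial factor $z^{-t}$, and the $\Lfour$ norm is blind to that factor on the unit torus.
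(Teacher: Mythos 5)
Your argument is correct and coincides with the paper's own proof: both replace $f$ by $F(z)=z^t f(z)=\sum_{u\in S+t}\psi(\alpha(u))z^u$ (which has the same $L^r$ norms), apply the general theorem with $\Gamma=\Fqu$, $F_g=\psi(g)$, $n=1$, $\pi=\alpha$, $U=S+t$, and observe that $\hat F_\eta=G(\psi,\eta)$. Nothing to add.
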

\begin{proof}
Our additive character polynomial $f(z)=\sum_{s \in S} \psi(\alpha(s+t)) z^s$ has the same $\Lr$ norms as $F(z)=z^t f(z)=\sum_{u \in S+t} \psi(\alpha(u)) z^u$, so take $\grp=\Fqu$, $F_g=\psi(g)$, $n=1$, $\pi=\alpha$, and $U=S+t$ in the above theorem, and note that for $\eta\in\mchars$ we have $\ft{F}_\eta =G(\psi,\eta)$.
\end{proof}
\begin{corollary}\label{Nathan}
If $f(z)$ is a multiplicative character polynomial with character $\chi\in\mchars$, support $S$, translation $t$, and arrangement $\alpha$, then
\[
\Lff{f} = \frac{1}{q^5} \sums{a,b,c,d \in S+t \\ a+b=c+d} \,\, \sum_{\kappa,\lambda,\mu,\nu \in \achars} \conj{\kappaprime(a) \lambdaprime(b)} \muprime(c) \nuprime(d) H(\kappa,\lambda,\mu,\nu)
\]
where for any $\eta\in\achars$, we let $\etaprime\in\Zechars$ be $\eta\circ\alpha$, and
\[
H(\kappa,\lambda,\mu,\nu) = \sum_{\xi \in \achars} G(\xi\kappa,\chi) G(\xi\lambda,\chi) \conj{G(\xi\mu,\chi) G(\xi\nu,\chi)}.
\]
\end{corollary}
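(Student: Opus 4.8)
The plan is to apply the general theorem of Section~\ref{Lawrence} in exactly the manner of Corollary~\ref{Martin}, but with the additive and multiplicative structures of $\Fq$ interchanged. The first step is to replace $f$ by the polynomial $F(z)=z^t f(z)=\sum_{u \in S+t} \chi(\alpha(u)) z^u$, which has the same $\Lr$ norms as $f(z)=\sum_{s \in S}\chi(\alpha(s+t))z^s$ because multiplying by the monomial $z^t$ leaves $|F|$ unchanged on the $e$-dimensional torus; here $S+t=(S_1+t_1)\times\cdots\times(S_e+t_e)$ is a box and hence a finite subset of $\Ze$.

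Next I would instantiate the general theorem with $\grp$ equal to the additive group of $\Fq=\F_{p^e}$ (so that $\grpcard=q$ and $\grpchars=\achars$), with $F_g=\chi(g)$ for $g\in\Fq$, with $n=e$, with $\pi=\alpha\in\Hom(\Ze,\Fq)$, and with $U=S+t$. The one point requiring a remark is the identification of the Fourier coefficients: for $\eta\in\achars$ we have $\ft{F}_\eta=\sum_{g\in\Fq}\chi(g)\eta(g)$, and since our extension convention gives $\chi(0)=0$, the $g=0$ term vanishes, leaving $\ft{F}_\eta=\sum_{a\in\Fqu}\eta(a)\chi(a)=G(\eta,\chi)$. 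Substituting $\ft{F}_{\xi\kappa}=G(\xi\kappa,\chi)$ and its analogues for $\lambda,\mu,\nu$ into the conclusion of the theorem then yields the stated formula, with $\etaprime=\eta\circ\alpha$ now lying in $\Zechars$ rather than $\Zchars$ because $\alpha$ has domain $\Ze$.

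I do not expect a genuine obstacle here: this corollary is the exact mirror of Corollary~\ref{Martin}, obtained by swapping the additive and multiplicative roles of $\Fq$. The only differences are bookkeeping—now $\grp$ is the additive group of $\Fq$ (with character group $\achars$) rather than the cyclic group $\Fqu$, the number of variables is $e$ rather than $1$, and each Gauss sum carries its multiplicative argument fixed at $\chi$ while its additive argument varies. The one thing genuinely worth checking is that the definition~\eqref{Raymond} of a multiplicative character polynomial has $\alpha$ mapping onto the \emph{full} additive group $\Fq$, which is precisely what makes $\grp=\Fq$ (and hence $\grpchars=\achars$) the correct instantiation; everything else is a direct transcription of the proof of Corollary~\ref{Martin}.
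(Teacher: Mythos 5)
Your proposal is correct and follows the paper's own proof exactly: translate by $z^t$, instantiate the general theorem with $\grp=\Fq$, $F_g=\chi(g)$, $n=e$, $\pi=\alpha$, $U=S+t$, and identify $\ft{F}_\eta=G(\eta,\chi)$. The extra remarks you add (that $\chi(0)=0$ kills the $g=0$ term, and that $\alpha$ being an epimorphism onto $\Fq$ justifies taking $\grp=\Fq$) are sound and merely make explicit what the paper leaves implicit.
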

\begin{proof}
Our multiplicative character polynomial $f(z)=\sum_{s \in S} \chi(\alpha(s+t)) z^s$ has the same $\Lr$ norms as $F(z)=z^t f(z)=\sum_{u \in S+t} \chi(\alpha(u)) z^u$, so take $\grp=\Fq$, $F_g=\chi(g)$, $n=e$, $\pi=\alpha$, and $U=S+t$ in the above theorem, and note that for $\eta\in\achars$ we have $\ft{F}_\eta =G(\eta,\chi)$ .
\end{proof}
The key to $\Lfour$ norms is then the evaluation of the sums $H(\kappa,\lambda,\mu,\nu)$ in the above two corollaries, which we take up in the next section.

\section{Two Propositions on Summations of Gauss Sums}\label{Odysseus}

Here we estimate the values of the summations $H$ that appear in Corollaries \ref{Martin} and \ref{Nathan}.
We begin with some basic facts about Gauss sums, which are proved in Theorems 5.11 and 5.12 of \cite{Lidl-Niederreiter}.
\begin{lemma}\label{Paul} If $\psi\in\achars$ and $\chi\in\mchars$, then
\begin{enumerate}[(i)]
\item $G(\psi,\chi)=q-1$ if both characters are trivial,
\item\label{Reginald} $G(\psi,\chi)=0$ if $\psi$ is trivial and $\chi$ is not,
\item\label{Raphael} $G(\psi,\chi)=-1$ if $\chi$ is trivial and $\psi$ is not,
\item\label{Samuel} $|G(\psi,\chi)|=\sqrt{q}$ if both characters are nontrivial, and
\item\label{Tantalus} $\sum_{a \in \Fqu} \psi(b a) \chi(a)=\conj{\chi}(b) G(\psi,\chi)$ for any $b \in \Fqu$.
\end{enumerate}
\end{lemma}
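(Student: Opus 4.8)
The plan is to reduce all five parts to the two standard orthogonality relations for characters of a finite abelian group: a nontrivial character sums to zero over the group, while the trivial character sums to the group's order. With this in hand, parts (i)--(iii) are essentially immediate. For (i), if $\psi$ and $\chi$ are both trivial then the summand $\psi(a)\chi(a)$ is identically $1$ on $\Fqu$, so $G(\psi,\chi)=|\Fqu|=q-1$. For (ii), triviality of $\psi$ reduces $G(\psi,\chi)$ to $\sum_{a\in\Fqu}\chi(a)$, which vanishes because $\chi$ is a nontrivial character of $\Fqu$. For (iii), triviality of $\chi$ gives $G(\psi,\chi)=\sum_{a\in\Fqu}\psi(a)=\big(\sum_{a\in\Fq}\psi(a)\big)-\psi(0)=0-1=-1$, using that $\psi$ is a nontrivial character of the additive group $\Fq$ and that $\psi(0)=1$.

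For (v) I would substitute $a\mapsto b^{-1}a$ in $\sum_{a\in\Fqu}\psi(ba)\chi(a)$; since $b\in\Fqu$ this substitution permutes $\Fqu$, so the sum becomes $\sum_{a\in\Fqu}\psi(a)\chi(b^{-1}a)=\chi(b^{-1})G(\psi,\chi)$. As $b\in\Fqu$, one has $\chi(b^{-1})=\overline{\chi(b)}=\conj{\chi}(b)$ (valid whether or not $\chi$ is trivial, since $|\chi(b)|=1$), which is exactly the claimed identity.

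The only part requiring genuine computation is (iv), and that is where I expect the real work to lie. I would expand $|G(\psi,\chi)|^2=G(\psi,\chi)\conj{G(\psi,\chi)}=\sum_{a,b\in\Fqu}\psi(a-b)\chi(a)\conj{\chi(b)}$, using $\overline{\psi(b)}=\psi(-b)$, and then for each fixed $b$ substitute $a=bc$ (so $c=b^{-1}a$ ranges over $\Fqu$), obtaining $\sum_{b,c\in\Fqu}\psi(b(c-1))\chi(c)$ after cancelling $\chi(b)\conj{\chi(b)}=1$. Interchanging the order of summation, the inner sum $\sum_{b\in\Fqu}\psi(b(c-1))$ equals $q-1$ when $c=1$ and equals $-1$ otherwise, by the additive orthogonality relation applied as in (iii). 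Hence $|G(\psi,\chi)|^2=(q-1)-\sum_{c\in\Fqu,\,c\neq 1}\chi(c)=(q-1)-\big(\sum_{c\in\Fqu}\chi(c)-1\big)=(q-1)-(0-1)=q$, where the multiplicative orthogonality relation kills $\sum_{c\in\Fqu}\chi(c)$ since $\chi$ is nontrivial. Taking positive square roots gives $|G(\psi,\chi)|=\sqrt{q}$. All of this is classical and is recorded as Theorems 5.11 and 5.12 of \cite{Lidl-Niederreiter}, so in the final write-up it suffices to cite it.
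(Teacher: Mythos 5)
Your proof is correct, and in fact the paper does not prove this lemma at all: it simply cites Theorems 5.11 and 5.12 of Lidl--Niederreiter, exactly as you note at the end. Your fully worked-out argument via the two orthogonality relations (including the substitution $a\mapsto b^{-1}a$ for (v) and the double-sum computation of $|G(\psi,\chi)|^2$ for (iv)) is the standard proof recorded in that reference, so there is no gap and no divergence in approach.
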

We first estimate the summation $H$ appearing in Corollary \ref{Martin}.
\begin{proposition}\label{Vaclav}
Let $\psi$ be a nontrivial character in $\achars$, and $\kappa,\lambda,\mu,\nu \in \mchars$.  If
\[
H = \sum_{\xi\in\mchars} G(\psi,\xi\kappa) G(\psi,\xi\lambda) \conj{G(\psi,\xi\mu) G(\psi,\xi\nu)},
\]
and
\[
M
=
\begin{cases}
(q-1)^3 & \text{if $\{\kappa,\lambda\}=\{\mu,\nu\}$,} \\
0       & \text{otherwise,}
\end{cases}
\]
then $|H-M| \leq (q-1)q\sqrt{q}$.
\end{proposition}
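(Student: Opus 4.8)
The plan is to expand the Gauss sums and collapse the outer sum over $\xi$ by orthogonality. Writing each factor as $G(\psi,\xi\kappa)=\sum_{a\in\Fqu}\psi(a)\kappa(a)\xi(a)$ (conjugation sending $\psi$ to $\psi(-\,\cdot\,)$ and $\mu,\nu,\xi$ to their conjugate characters), one gets
\[
H=\sums{a,b,c,d\in\Fqu}\psi(a+b-c-d)\,\kappa(a)\lambda(b)\conj{\mu(c)\nu(d)}\sum_{\xi\in\mchars}\xi(abc^{-1}d^{-1}),
\]
and since $\mchars$ has order $q-1$ the inner sum is $q-1$ when $ab=cd$ and $0$ otherwise. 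Hence $H=(q-1)T$, where $T$ is the same four-fold sum over $a,b,c,d\in\Fqu$ subject to $ab=cd$. Parametrising those solutions by $(a,b,e)\mapsto(a,b,ae,b/e)$ and using $\psi(a+b-ae-b/e)=\psi(a(1-e))\,\psi(b(1-1/e))$ factors $T$ as
\[
T=\sum_{e\in\Fqu}(\conj{\mu}\nu)(e)\Bigl(\sum_{a\in\Fqu}\psi(a(1-e))\,(\kappa\conj{\mu})(a)\Bigr)\Bigl(\sum_{b\in\Fqu}\psi(b(1-1/e))\,(\lambda\conj{\nu})(b)\Bigr).
\]

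Next I would evaluate the two inner sums. When $e\neq1$ both $1-e$ and $1-1/e$ lie in $\Fqu$, so Lemma~\ref{Paul}\eqref{Tantalus} turns the brackets into $(\conj\kappa\mu)(1-e)\,G(\psi,\kappa\conj\mu)$ and $(\conj\lambda\nu)(1-1/e)\,G(\psi,\lambda\conj\nu)$; when $e=1$ the brackets are $\sum_{a\in\Fqu}(\kappa\conj{\mu})(a)$ and $\sum_{b\in\Fqu}(\lambda\conj{\nu})(b)$, equal to $q-1$ when $\kappa=\mu$, resp.\ $\lambda=\nu$, and $0$ otherwise. Writing $1-1/e=(e-1)/e$ and $1-e=-(e-1)$ and collecting characters of $e$ and of $e-1$, the $e\neq1$ part collapses and one obtains
\[
T=(q-1)^2\,[\kappa=\mu][\lambda=\nu]\;+\;(\kappa\conj\mu)(-1)\,G(\psi,\kappa\conj\mu)\,G(\psi,\lambda\conj\nu)\sum_{\substack{e\in\Fqu\\ e\neq1}}(\lambda\conj\mu)(e)\,(\conj{\kappa\lambda}\mu\nu)(e-1),
\]
with $[\ \cdot\ ]$ an Iverson indicator. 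The trailing sum is, up to the sign $(\conj{\kappa\lambda}\mu\nu)(-1)$, a Jacobi sum for the characters $\lambda\conj\mu$ and $\conj{\kappa\lambda}\mu\nu$ (whose product is $\conj\kappa\nu$); its evaluation via the usual factorisation through Gauss sums (see \cite{Lidl-Niederreiter}) is classical: it equals $q-2$ when $\lambda\conj\mu$ and $\conj{\kappa\lambda}\mu\nu$ are both trivial, and has modulus at most $\sqrt q$ otherwise.

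Finally I would assemble the bound. The $(q-1)^2$ term is nonzero only if $\kappa=\mu$ and $\lambda=\nu$, and the trailing sum exceeds $\sqrt q$ in modulus only if $\lambda=\mu$ and $\kappa=\nu$; in either situation $\{\kappa,\lambda\}=\{\mu,\nu\}$, so $M=(q-1)^3$. Hence when $M=0$ both obstructions are absent, and using $|G(\psi,\kappa\conj\mu)|,|G(\psi,\lambda\conj\nu)|\le\sqrt q$ (Lemma~\ref{Paul}\eqref{Samuel}) together with the trailing sum being $\le\sqrt q$ gives $|H-M|=(q-1)|T|\le(q-1)q\sqrt q$. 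The real work is the case $M\neq0$, where I must show $|T-(q-1)^2|\le q\sqrt q$; the hard sub-case is $\lambda=\mu$, $\kappa=\nu$, $\kappa\neq\mu$, in which the $(q-1)^2$ term vanishes, the trailing sum is $\sum_{e\neq0,1}1=q-2$, and $\lambda\conj\nu=\conj{\kappa\conj\mu}$ forces $G(\psi,\kappa\conj\mu)\,G(\psi,\lambda\conj\nu)=(\kappa\conj\mu)(-1)\,q$ by the standard identity $G(\psi,\chi)\,G(\psi,\conj\chi)=\chi(-1)q$; substituting yields $T=q(q-2)$, which differs from $(q-1)^2$ by exactly $-1$. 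The remaining sub-cases (all four characters equal; or $\kappa=\mu$ with $\lambda=\nu$ but not all equal) are dispatched identically using $G(\psi,\varepsilon)=-1$ and the elementary values $\sum_{e\neq0,1}\chi(e)=-1$ for $\chi$ nontrivial and $\sum_{e\neq0,1}1=q-2$, and there $|T-(q-1)^2|\le q-2$. In all cases $|H-M|=(q-1)\,|T-M/(q-1)|\le(q-1)q\sqrt q$. The main obstacle is precisely this bookkeeping in the degenerate cases: one must track the exact values, not merely the absolute-value bounds, of the Gauss and Jacobi sums that blow up, and verify that their large contributions cancel against $M$ (and against the $(q-1)^2$ term) to within the claimed error.
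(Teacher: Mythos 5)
Your argument is correct, and for the generic case $\{\kappa,\lambda\}\neq\{\mu,\nu\}$ it runs parallel to the paper's: expand the four Gauss sums, collapse the $\xi$-sum by orthogonality of $\mchars$ to the condition $ab=cd$, parametrise the resulting three-fold sum by a single multiplier (your $(a,b,e)\mapsto(a,b,ae,b/e)$ versus the paper's $(w,x,y,z)=(uy,x,y,ux)$ — a cosmetic difference), and bound the residual complete sum over the multiplier by the Weil/Jacobi estimate together with $|G|\le\sqrt q$. Where you genuinely diverge is the degenerate case $\{\kappa,\lambda\}=\{\mu,\nu\}$. The paper treats that case separately and much more cheaply: there $H=\sum_\xi|G(\psi,\xi\kappa)|^2|G(\psi,\xi\lambda)|^2$, and Lemma~\ref{Paul}\eqref{Raphael},\eqref{Samuel} give $H=(q-2)q^2+1$ or $(q-3)q^2+2q$ directly, so $H-M=(q-1)(q-2)$ or $1-q$, with no need to open the Gauss sums at all. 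You instead push the uniform expansion through the degenerate case, and so you must track exact values — the Jacobi sum equalling $q-2$, the identity $G(\psi,\chi)G(\psi,\conj\chi)=\chi(-1)q$ for $\lambda\conj\nu=\conj{\kappa\conj\mu}$, the boundary $e=1$ term contributing $(q-1)^2[\kappa=\mu][\lambda=\nu]$ — and verify that the large contributions cancel $M$ to within the allowed error. Your computation of those cancellations checks out (I verified $T=q(q-2)$ in the case $\lambda=\mu$, $\kappa=\nu$, $\kappa\neq\mu$, and $T=(q-1)^2+q-2$ and $(q-1)^2-1$ in the remaining sub-cases), so the proof is complete; it is simply more bookkeeping than the paper's two-case split, which avoids confronting the divergent terms at all by never expanding in the case where they appear.
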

\begin{proof}
First we consider the case where $\{\kappa,\lambda\}=\{\mu,\nu\}$, wherein
\[
H = \sum_{\xi\in\mchars} |G(\psi,\xi\kappa)|^2 |G(\psi,\xi\lambda)|^2.
\]
One can work out from Lemma \ref{Paul}\eqref{Raphael},\eqref{Samuel} that $H=(q-2)q^2+1$ if $\kappa=\lambda=\mu=\nu$ and $H=(q-3)q^2+2 q$ otherwise.
Thus $H-M=(q-1)(q-2)$ or $1-q$.

Now we consider the case where $\{\kappa,\lambda\}\not=\{\mu,\nu\}$, wherein
\begin{align*}
H 
& = \sum_{\xi\in\mchars} \sum_{w,x,y,z \in \Fqu} \psi(w+x-y-z) \xi(w x y^{-1} z^{-1}) \kappa(w) \lambda(x) \conj{\mu(y) \nu(z)} \\
& = (q-1) \sums{w,x,y,z \in \Fqu \\ w x = y z} \psi(w+x-y-z) \kappa(w) \lambda(x) \conj{\mu(y) \nu(z)}.
\end{align*}
Now reparameterize the sum with $w=u y$ and $z=u x$ to obtain
\[
H = (q-1) \sum_{u,x,y \in \Fqu} \conj{\psi}((u-1)x) \psi((u-1)y) \kappa\conj{\nu}(u) \lambda\conj{\nu}(x) \kappa\conj{\mu}(y),
\]
and since $\{\kappa, \lambda\}\not=\{\mu, \nu\}$, we can restrict to $u\not=1$ without changing the value of the summation.
Then Lemma \ref{Paul}\eqref{Tantalus} tells us that when we sum over $x$ and $y$, we obtain
\[
H = (q-1) G(\psi,\kappa\conj{\mu}) G(\conj{\psi},\lambda\conj{\nu}) \sum_{u\not=0,1} \conj{\kappa\lambda}\mu\nu(u-1) \kappa\conj{\nu}(u).
\]
Now $\conj{\kappa\lambda}\mu\nu$ and $\kappa\conj{\nu}$ can not both be the trivial character since $\{\kappa,\lambda\}\not=\{\mu,\nu\}$.
If $\conj{\kappa\lambda}\mu\nu$ is trivial, then the sum over $u$ is $-1$; if $\kappa\conj{\nu}$ is trivial, the sum is $-\conj{\kappa\lambda}\mu\nu(-1)$; otherwise, let $\omega$ be a generator of $\mchars$ and we can write the sum over $u$ as $\sum_{u\not=0,1} \omega((u-1)^a u^b)$ for some nonzero $a,b \in \Z/(q-1)\Z$, and use the Weil bound \cite{Weil}, \cite[Theorem 5.41]{Lidl-Niederreiter} to see that this sum is bounded in magnitude by $\sqrt{q}$.
We can use this fact, along with Lemma \ref{Paul}\eqref{Raphael}, \eqref{Samuel}, to see that $|H|\leq (q-1)q\sqrt{q}$.
\end{proof}
Similarly, we estimate the summation $H$ appearing in Corollary \ref{Nathan}.
\begin{proposition}\label{Walter}
Let $\chi$ be a nontrivial character in $\mchars$, and $\kappa,\lambda,\mu,\nu \in \achars$.  If
\[
H = \sum_{\xi\in\achars} G(\xi\kappa,\chi) G(\xi\lambda,\chi) \conj{G(\xi\mu,\chi) G(\xi\nu,\chi)},
\]
and
\[
M
=
\begin{cases}
q^3 & \text{if $\{\kappa,\lambda\}=\{\mu,\nu\}$,} \\
q^3 & \text{if $\kappa=\lambda$, $\mu=\nu$, and $\chi$ is the quadratic character,} \\
0   & \text{otherwise,}
\end{cases}
\]
then $|H-M| \leq 3 q^2 \sqrt{q}$.
\end{proposition}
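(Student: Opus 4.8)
The plan is to reprise the proof of Proposition~\ref{Vaclav}, interchanging the roles of the additive and multiplicative structures of $\Fq$.

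\emph{Reduction to a one-variable sum.} First I would dispose of the case $\{\kappa,\lambda\}=\{\mu,\nu\}$: in either alignment the summand becomes $|G(\xi\kappa,\chi)|^2|G(\xi\lambda,\chi)|^2$, which by Lemma~\ref{Paul}\eqref{Reginald},\eqref{Samuel} equals $q^2$ except when $\xi\kappa$ or $\xi\lambda$ is trivial, so $H$ equals $q^3-q^2$ if $\kappa=\lambda$ and $q^3-2q^2$ otherwise, giving $|H-M|\le 2q^2$. Now assume $\{\kappa,\lambda\}\ne\{\mu,\nu\}$. Expanding the four Gauss sums and summing $\xi$ over $\achars$ imposes the relation $w+x=y+z$ and yields a factor $q$:
\[
H=q\sums{w,x,y,z\in\Fqu \\ w+x=y+z}\kappa(w)\lambda(x)\conj{\mu(y)\nu(z)}\,\chi(wx)\conj{\chi(yz)}.
\]
Here I would apply the \emph{additive} reparameterization $w=u+y$, $z=u+x$ (the analogue of $w=uy$, $z=ux$ from Proposition~\ref{Vaclav}), split off the $u=0$ term, whose contribution is $q\big(\sum_{y\in\Fqu}(\kappa\conj\mu)(y)\big)\big(\sum_{x\in\Fqu}(\lambda\conj\nu)(x)\big)$, and in the sum over $u\ne0$ substitute $y=u\alpha$, $x=u\beta$, so that $\chi(wx)\conj{\chi(yz)}$ collapses to $\chi\big(\frac{\beta(1+\alpha)}{\alpha(1+\beta)}\big)$, no longer involving $u$. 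Summing over $u\ne0$ replaces the $u$-dependence by $(q-1)$ times the indicator of a vanishing affine form $L(\alpha,\beta)=0$ minus the indicator of its complement; the complementary double sum factors into two one-variable sums, each equal to $-1$. Collecting terms, and using that $\sum_{y\in\Fqu}(\kappa\conj\mu)(y)$ is $-1$ unless $\kappa=\mu$, gives
\[
H=q^2\sums{\alpha,\beta\in\Fq \\ L(\alpha,\beta)=0}\chi\Big(\frac{\beta(1+\alpha)}{\alpha(1+\beta)}\Big)+E',
\]
where $E'=0$ if $\kappa\ne\mu$ and $\lambda\ne\nu$, and $|E'|\le q^2$ otherwise; crucially, $L$ is a non-constant affine form precisely because $\{\kappa,\lambda\}\ne\{\mu,\nu\}$.

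\emph{Bounding the one-variable sum.} Parameterizing the line $L=0$ turns the displayed double sum into a one-variable character sum $\sum_\beta\chi(R(\beta))$ with $R$ a rational function whose numerator and denominator each have degree at most $2$; writing $\chi$ as a power of a generator of $\mchars$, this is a sum of the form bounded by Weil in the proof of Proposition~\ref{Vaclav}, so $|\sum_\beta\chi(R(\beta))|\le 3\sqrt q$ unless $R$ is a constant times an $m$-th power of a rational function, where $m$ is the order of $\chi$. A direct computation of the zero/pole divisor of $R$ shows this happens only when (i) $\{\kappa,\lambda\}=\{\mu,\nu\}$ (excluded), or (ii) $\kappa=\lambda$, $\mu=\nu$, and $\chi$ is quadratic, in which case $R=\big(\frac{\beta}{1+\beta}\big)^2$, the one-variable sum equals $q-2$, and $H=q^3-2q^2$, agreeing with $M=q^3$ to within $2q^2$. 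In all other cases $R$ is not such a power, so the one-variable sum is at most $3\sqrt q$ in absolute value, and since $E'$ vanishes unless $\kappa=\mu$ or $\lambda=\nu$---a situation in which the one-variable sum is itself of bounded absolute value---one gets $|H|\le 3q^2\sqrt q$ with $M=0$.

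\emph{Main obstacle.} The delicate step is the divisor computation just cited: one must pin down exactly which configurations of the additive characters $\kappa,\lambda,\mu,\nu$ force $R$ to be a constant times an $m$-th power---so that the Weil bound fails and the character sum has size $\approx q$ instead of $\approx\sqrt q$---and verify that these coincide with the configurations already credited in the definition of $M$. One also has to keep track of the convention $\chi(0)=0$, which is what makes all the degenerate summands vanish automatically, and to handle the few edge cases where the line $L=0$ collapses to a single value of $\alpha$ or of $\beta$; these are routine but are what pins down the precise constant $3$ in the bound.
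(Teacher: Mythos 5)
Your reduction to a four-variable constrained sum, followed by the reparameterization $w=u+y$, $z=u+x$ and then $y=u\alpha$, $x=u\beta$, is a valid alternative that deliberately mirrors the proof of Proposition~\ref{Vaclav}; with some care it can be made to work, and the arithmetic you carry out (the $u=0$ contribution, the factorization of the unconstrained $(\alpha,\beta)$-sum into two sums each equal to $-1$, the extraction of the affine form $L$) is correct. But this is genuinely not the paper's route, and it is considerably heavier. The paper instead represents every additive character through the canonical character $\epsilon$ and applies Lemma~\ref{Paul}\eqref{Tantalus} \emph{before} opening anything up: with $\kappa(z)=\epsilon(az)$, etc., and $\xi(z)=\epsilon(xz)$, each Gauss sum collapses as $G(\xi\kappa,\chi)=\conj{\chi}(x+a)\,G(\epsilon,\chi)$, so that
\[
H \;=\; q^2 \sum_{x\in\Fq} \chi\bigl((x+a)^{m-1}(x+b)^{m-1}(x+c)(x+d)\bigr),
\]
with $m$ the order of $\chi$. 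The exceptional cases are then read off instantly from the multiplicities $(m-1,m-1,1,1)$, and the Weil bound $(d-1)\sqrt q\le 3\sqrt q$ follows from the polynomial having at most four distinct roots.

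The real gap in your proposal is exactly the step you label the ``main obstacle'': the divisor analysis of $R(\beta)$. In your setup, after substituting $\alpha=\alpha(\beta)$ along the line $L=0$, one must (a) determine precisely for which $(a,b,c,d)$ the product $P(\beta)Q(\beta)^{m-1}$ (where $R=P/Q$) becomes an $m$-th power — the roots of $P$ and $Q$ are $\{0,-(1+r)/p\}$ and $\{-r/p,-1\}$ for suitable $p,r$ depending asymmetrically on $a,b,c,d$, and one must verify that the coincidence patterns correspond exactly to $\{\kappa,\lambda\}=\{\mu,\nu\}$ or $\kappa=\lambda$, $\mu=\nu$, $m=2$; and (b) treat the degenerate cases $\kappa=\mu$ or $\lambda=\nu$ separately, since there the ``line'' $L=0$ is $\{\beta=\mathrm{const}\}$ or $\{\alpha=\mathrm{const}\}$, the sum over the line is only $O(1)$, and it is precisely this smallness that keeps $|H|\le 3q^2\sqrt q$ despite $|E'|=q^2$. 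You allude to both issues without carrying them out, and they are not routine: they are where your version of the proof actually lives. The paper's change of coordinates avoids this entirely, which is why the published argument is so short.
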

\begin{proof}
Let $\epsilon$ be the canonical additive character over $\Fq$.
Then for any $\eta\in\achars$, there is a unique $y \in \Fq$ such that $\eta(z)=\epsilon(y z)$ for all $z \in \Fq$.
Let $a,b,c,d$ be chosen so that $\kappa(z)=\epsilon(a z)$, $\lambda(z)=\epsilon(b z)$, $\mu(z)=\epsilon(c z)$, and $\nu(z)=\epsilon(d z)$ for all $z \in \Fq$.
Furthermore, we shall parameterize the sum of $\xi$ over $\achars$ in the definition of $H$ by a sum over $x \in \Fq$, and replace $\xi(z)$ with $\epsilon(x z)$ wherever it occurs.
Thus, in view Lemma \ref{Paul}\eqref{Tantalus} and \eqref{Reginald}, we have
\[
H = |G(\epsilon,\chi)|^4 \sum_{x\in\Fq} \conj{\chi}((x+a)(x+b))\chi((x+c)(x+d)),
\]
and $|G(\epsilon,\chi)|=\sqrt{q}$ by Lemma \ref{Paul}\eqref{Samuel}.
Let $m$ be the order of $\chi$.  Then
\[
H = q^2 \sum_{x \in \Fq} \chi((x+a)^{m-1} (x+b)^{m-1} (x+c) (x+d)).
\]
The magnitude of the Weil sum over $x$ is bounded by $3\sqrt{q}$ unless the polynomial $(x+a)^{m-1} (x+b)^{m-1} (x+c) (x+d)$ is an $m$th power in $\Fq[x]$.  (See \cite{Weil}, \cite[Theorem 5.41]{Lidl-Niederreiter}.)
It is an $m$th power only if $\{a,b\}=\{c,d\}$ or if $m=2$, $a=b$, and $c=d$, in which cases the Weil sum is either $q-1$ (if $a=b=c=d$) or $q-2$ (if there are two distinct roots).
\end{proof}

\section{Asymptotic $\Lfour$ Norm}\label{Tamar}
We prove Theorems \ref{Gabriel}\eqref{Fabian}, \ref{Harold}\eqref{Yakov}, and \ref{Earl}\eqref{Nancy} in Section \ref{Tamar} by using the propositions from the previous section with Corollaries \ref{Martin} and \ref{Nathan}.
\subsection{Proof of Theorems \ref{Harold}(\ref{Yakov}) and \ref{Earl}(\ref{Nancy})}
Let $\chi$ be a nontrivial character in $\mchars$, let $\alpha$ be an epimorphism from $\Ze$ to $\Fq$, let $t \in \Z^e$, and let $S=S_1\times\cdots\times S_e$ be a box where each $S_j$ is a nonempty segment of the form $\{0,1,\ldots,s_j-1\}$.
Recall from Section \ref{William} our notational convention that $z^s$ is shorthand for $z_1^{s_1} \ldots z_e^{s_e}$ when $s=(s_1,\ldots,s_e) \in \Z^e$.
Let $f(z)$ be the multiplicative character polynomial $\sum_{s \in S} \chi(\alpha(s+t)) z^s$.
We shall calculate $\Lff{f}$ first, and then investigate what happens asymptotically to this quantity in the limits considered in Theorems \ref{Harold}\eqref{Yakov} and \ref{Earl}\eqref{Nancy}.
By Corollary \ref{Nathan}, we have
\begin{equation}\label{Zachary}
\Lff{f} = \frac{1}{q^5} \sums{a,b,c,d \in U \\ a+b=c+d} \,\, \sum_{\kappa,\lambda,\mu,\nu \in \achars} \conj{\kappaprime(a) \lambdaprime(b)} \muprime(c) \nuprime(d) H(\kappa,\lambda,\mu,\nu),
\end{equation}
where we let $U=S+t$, and for any $\eta\in\achars$, we let $\etaprime=\eta\circ\alpha$, and
\[
H(\kappa,\lambda,\mu,\nu) = \sum_{\xi\in \achars} G(\xi\kappa,\chi) G(\xi\lambda,\chi) \conj{G(\xi\mu,\chi) G(\xi\nu,\chi)}.
\]
By Proposition \ref{Walter}, we can write $H(\kappa,\lambda,\mu,\nu)=M(\kappa,\lambda,\mu,\nu)+N(\kappa,\lambda,\mu,\nu)$ with
\[
M(\kappa,\lambda,\mu,\nu)
=
\begin{cases}
q^3 & \text{if $\{\kappa,\lambda\}=\{\mu,\nu\}$,} \\
q^3 & \text{if $\kappa=\lambda$, $\mu=\nu$, and $\chi$ is the quadratic character,} \\
0   & \text{otherwise,}
\end{cases}
\]
and
\begin{equation}\label{Pamela}
|N(\kappa,\lambda,\mu,\nu)| \leq 3 q^2 \sqrt{q}
\end{equation}
for all $\kappa$, $\lambda$, $\mu$, $\nu\in\mchars$.

If $\chi$ is non-quadratic, when we write out separately the contributions from $M$ and $N$ to \eqref{Zachary}, we get $\Lff{f} = A+B-D+E$, where
\begin{align*}
A & = \frac{1}{q^2} \sums{a,b,c,d \in U \\ a+b=c+d} \,\, \sum_{\kappa,\lambda \in \achars} \kappaprime(c-a) \lambdaprime(d-b), \\
B & = \frac{1}{q^2} \sums{a,b,c,d \in U \\ a+b=c+d} \,\, \sum_{\kappa,\lambda \in \achars} \kappaprime(d-a) \lambdaprime(c-b), \\
D & = \frac{1}{q^2} \sums{a,b,c,d \in U \\ a+b=c+d} \,\, \sum_{\kappa \in \achars} 1, \\
E & = \frac{1}{q^5} \sum_{\kappa,\lambda,\mu,\nu \in \achars} N(\kappa,\lambda,\mu,\nu) \sums{a,b,c,d \in U \\ a+b=c+d} \conj{\kappaprime(a) \lambdaprime(b)} \muprime(c) \nuprime(d).
\end{align*}
Here $A$ accounts for the value of $M$ when $(\kappa,\lambda)=(\mu,\nu)$, and $B$ accounts for the value of $M$ when $(\kappa,\lambda)=(\nu,\mu)$, while $D$ corrects for the double counting by $A$ and $B$ of the case $\kappa=\lambda=\mu=\nu$.

Note that $A=B$, and that $A$ counts the number of $(a,b,c,d) \in U^4$ with $c-a=b-d \in \ker\alpha$.
If we write $a=(a_1,\ldots,a_e)$, $b=(b_1,\ldots,b_e)$, $c=(c_1,\ldots,c_e)$, and $d=(d_1,\ldots,d_e)$, then $c-a \in \ker\alpha$ is equivalent to $c_1-a_1 \equiv \cdots \equiv c_e-a_e \equiv 0 \pmod{p}$, because $\alpha$ is an epimorphism from $\Ze$ to $\Fq=\F_{p^e}$ and so factors as $\alpha=\gamma \circ \beta$, with $\beta\colon \Ze\to(\Z/p\Z)^e$ coordinate-wise reduction modulo $p$ and $\gamma\colon (\Z/p\Z)^e \to \Fq$ a group isomorphism.
Now $U=U_1\times\cdots\times U_e$ with each $U_j=\{t_j,t_j+1,\ldots,t_j+|S_j|-1\}$, so for each $n \in \Z$, there are $\max(0,\card{S_j}-p|n|)$ ways for $c_j-a_j$ to equal $p n$ and the same number of ways for $b_j-d_j$ to equal $p n$.
So
\[
A = B = \prod_{j=1}^e \sum_{n_j \in \Z} \max(0,\card{S_j}-p |n_j|)^2.
\]

On the other hand, $q D$ counts the number of $(a,b,c,d) \in U^4$ with $c-a=b-d$, so by the same argument we just used (with modulus $1$ instead of $p$), $q D = \prod_{j=1}^e \sum_{n_j \in \Z} \max(0,\card{S_j}-|n_j|)^2$, from which we can compute
\[
D = \prod_{j=1}^e \left(\frac{2\card{S_j}^3+\card{S_j}}{3 p}\right).
\]
Now we bound $E$ via two bounds: (i) our bound \eqref{Pamela} on $N$, and (ii) a technical result, Lemma \ref{Xavier} in the Appendix, bounding the inner sum of $E$.
We satisfy the condition on $\alpha$ demanded by this lemma, since $\alpha=\gamma \circ \beta$ with $\beta\colon \Ze\to(\Z/p\Z)^e$ coordinate-wise reduction modulo $p$ and $\gamma \colon (\Z/p\Z)^e \to \Fq$ a group isomorphism.  With these two bounds, we obtain
\begin{equation}\label{Eddie}
|E| \leq 3 \cdot 64^e q \sqrt{q} \prod_{j=1}^e \max\left(1,\frac{\card{S_j}}{p}\right)^3 \prod_{j=1}^e (1+\log p)^3.
\end{equation}
Now we divide $\Lff{f}=A+B-D+E$ by $\Ltf{f}$ and consider the limit where each $\card{S_j}/p \to \sigma_j$ as $q\to\infty$, that is, consider what happens in a size-stable family of polynomials.
Another technical result, Lemma \ref{Caroline} in the Appendix, shows that we can replace the denominator $\Ltf{f}$ with $\card{S}^2$ without changing the limit.  Then recall the definition \eqref{Hilda} of $\Omega$, and note that $A/\card{S}^2$ and $B/\card{S}^2$ tend to $\prod_{j=1}^e \Omega(1/\sigma_j,0)$, that $D/\card{S}^2$ tends to $(2/3)^e \prod_{j=1}^e \sigma_j$, and that $|E|/\card{S}^2$ tends to $0$ in this limit.

If $\chi$ is quadratic, the proof is done in the same manner, except that there is now a contribution from $M$ in the case where $\kappa=\lambda$ and $\mu=\nu$, and so we get $\Lff{f} = A+B+C-2 D+E$, where $A$, $B$, $D$, and $E$ are as defined above, and
\[
C = \frac{1}{q^2} \sums{a,b,c,d \in U \\ a+b=c+d} \,\, \sum_{\kappa,\mu \in \achars} \kappaprime(-a-b) \muprime(c+d).
\]
Note that we subtract $D$ twice now because $A$, $B$, and $C$ count the case where $\kappa=\lambda=\mu=\nu$ three times.
$C$ counts the number of $(a,b,c,d)\in U^4$ with $a+b=c+d\in\ker\alpha$.
Following the method we used to determine $A$, write $a=(a_1,\ldots,a_e)$, $b=(b_1,\ldots,b_e)$, $c=(c_1,\ldots,c_e)$, and $d=(d_1,\ldots,d_e)$, and note that $a+b \in \ker\alpha$ is equivalent to $a_1+b_1 \equiv \cdots \equiv a_e+b_e \equiv 0 \pmod{p}$.
Since $U=U_1\times\cdots\times U_e$ with each $U_j=\{t_j,t_j+1,\ldots,t_j+|S_j|-1\}$, there are $\max(0,\card{S_j}-|n p - (2 t_j + \card{S_j} - 1)|)$ ways to obtain $a_j+b_j=n p$ with $(a_j,b_j) \in U_j^2$, and the same number of ways to obtain $c_j+d_j=n p$ with $(c_j,d_j) \in U_j^2$, so 
\[
C=\prod_{j=1}^e \sum_{n_j \in \Z} \max(0,\card{S_j}-|p n_j -\card{S_j}-2 t_j+1|)^2,
\]
and if we have both size- and translation-stability, then $\card{S_j}/p \to\sigma_j$ and $t_j/p \to \tau_j$ as $q \to \infty$, so that $C/\card{S}^2 \to \prod_{j=1}^e \Omega\left(1/\sigma_j,1+2\tau_j/\sigma_j\right)$.
\subsection{Proof of Theorem \ref{Gabriel}(\ref{Fabian})}
The proof is the same, {\it mutatis mutandis}, as for the $e=1$ case of Theorem \ref{Harold}\eqref{Yakov}, with the roles of $\Fq$ and $\Fqu$ exchanged.
Corollary \ref{Martin} and Proposition \ref{Vaclav} replace Corollary \ref{Nathan} and Proposition \ref{Walter}, and Lemma \ref{Caroline} becomes unnecessary as $\Ltt{f}$ for an additive character polynomial $f$ is always precisely equal to the cardinality of the support of $f$.
These, and other attendant minor changes resulting from the exchange of $\Fq$ and $\Fqu$, cause \eqref{Pamela} to become $|N(\kappa,\lambda,\mu,\nu)| \leq (q-1) q\sqrt{q}$, and \eqref{Eddie} to become $|E| \leq 64 q \sqrt{q} \max\left(1,\card{S}/(q-1)\right)^3 (1+\log(q-1))^3$, and any other printed instance of $p$ or $q$ should be replaced with $q-1$.

\section{Minimizing the Asymptotic Ratio of $\Lfour$ to $\Ltwo$ Norm}\label{Valerie}

Here we prove Theorems \ref{Gabriel}\eqref{James}, \ref{Harold}\eqref{Karl}, and \ref{Earl}\eqref{Ian} by finding the limiting sizes and (for quadratic multiplicative character polynomials) the limiting translations that globally minimize the ratio of the $\Lfour$ to $\Ltwo$ norm.

\subsection{Proof of Theorem \ref{Harold}(\ref{Karl})}
In view of Theorem \ref{Harold}\eqref{Yakov}, we are trying to minimize the limiting ratio of norms, given by the function
\[
K(x_1,\ldots,x_e)=-\frac{2^e}{3^e} \prod_{j=1}^e x_i + 2 \prod_{j=1}^e \Phi(x_j),
\]
for $x_1,\ldots,x_e$ positive real numbers (the limiting sizes), where for positive $x$, we define
\begin{equation}\label{Mary}
\Phi(x)=\Omega\left(\frac{1}{x},0\right)=\sum_{n \in \Z} \max\left(0,1-\frac{|n|}{x}\right)^2,
\end{equation}
which is differentiable for $x\not=0$ and is $C^\infty$ for $x\not\in\Z$.
\begin{step}
We can assume that each $x_j > 1$ because otherwise the partial derivative of $K$ with respect to $x_j$ would be negative.
\end{step}
\begin{step}
We can assume that $(x_1,\ldots,x_e) \in (1,3)^e$: Lemma \ref{Laura} in the Appendix shows that $K(x_1,\ldots,x_e) \geq \Phi(x_1)\cdots\Phi(x_e)$, and note that $\Phi(x)$ is increasing for $x > 1$, that $\Phi(1)=1$, $\Phi(3)=19/9 > 2$, and $K(1,\ldots,1)=2-(2/3)^e < 2$.
This proves that a global minimum exists and lies in $(1,3)^e$: the closure of $(1,3)^e$ is compact and $K$ is continuous thereupon.
\end{step}
\begin{step}
Suppose $(\sigmas)$ to be global minimizer of $K$.
Then the partial derivatives of $K$ must vanish there, whence for each $k \in \eseg$, we have $2 u(\sigma_k) \prod_{j=1}^e U(\sigma_j) = 1$, where $u(x)=\frac{x \Phi^\prime(x)}{\Phi(x)}$ and $U(x)=\frac{3 \Phi(x)}{2 x}$ for $x > 0$.
\end{step}
\begin{step}
Then one can show that $U(x)$ is strictly decreasing on $[1,3]$, with $U(3)=19/18$.
Thus we must have $u(\sigma_k) \leq (1/2)\cdot(18/19)^e < 1/2$ for all $k \in \eseg$.
Then examination of $u(x)$ shows that $u(x)$ strictly increases from $0$ to $1/2$ for $x \in [1,2-\sqrt{2/3}]$, and then $u(x) > 1/2$ for $x \in (2-\sqrt{2/3},3)$.
This then forces $\sigma_1=\cdots=\sigma_e < 2 -\sqrt{2/3} < 6/5$.
\end{step}
\begin{step}
Now $U(\sigma_1) > U(6/5) > 9/7$, so this forces $u(\sigma_1) < (1/2)\cdot(7/9)^e \leq 7/18$, which in turn forces $\sigma_1 < 8/7$.
Then $U(\sigma_1) > U(8/7) > 4/3$, so this forces $u(\sigma_1) < (1/2)\cdot (3/4)^e$.
Since $u(x) \geq 8(x-1)/3$ for $x \in [1,8/7]$, this forces $\sigma_1 < 1+\frac{3^{e+1}}{2^{2 e+4}}$.
\end{step}
\begin{step}
Now our problem is reduced to the single-variable minimization of $\Theta(x)=K(x,\ldots,x)=-\left(\frac{2 x}{3}\right)^e + 2 \Phi(x)^e$ on the interval $(1,1+\frac{3^{e+1}}{2^{2 e+4}})$.
It is not hard to see that $d\Theta/d x$ vanishes if and only if $x^{3 e} -\frac{3^e}{2^{e-3}}(x-1)(3 x^2 -4 x +2)^{e-1}$ vanishes.
Meanwhile $d^2\Theta/d x^2 > 0$ on our interval: by computing its value and then dropping a nonnegative term, we can see that $d^2\Theta/d x^2$ is at least $-e(e-1)\frac{2^e x^{e-2}}{3^e} + 8 e \frac{3-2 x}{x^4} \Phi(x)^{e-1} \geq -e(e-1)\frac{19^e}{24^e} + 2 e > 0$.
This proves that there is a unique minimum: the unique root $a_e$ of $x^{3 e} -\frac{3^e}{2^{e-3}}(x-1)(3 x^2 -4 x +2)^{e-1}$ lying in $(1,1+\frac{3^{e+1}}{2^{2 e+4}})$.
\end{step}
\subsection{Proof of Theorem \ref{Gabriel}(\ref{James})}
This is accomplished exactly as the $e=1$ case of the proof of Theorem \ref{Harold}\eqref{Karl} above, save that Lemma \ref{Katherine} replaces Lemma \ref{Laura}.
\subsection{Proof of Theorem \ref{Earl}(\ref{Ian})}\label{George}
In view of Theorem \ref{Earl}\eqref{Nancy}, we are trying to minimize the limiting ratio of norms, given by the function
\begin{equation}\label{Abigail}
-\frac{2^{e+1}}{3^e} \prod_{j=1}^e x_j + 2 \prod_{j=1}^e \Omega\left(\frac{1}{x_j},0\right) + \prod_{j=1}^e \Omega\left(\frac{1}{x_j},1+\frac{2 y_j}{x_j}\right)
\end{equation}
for $x_1,\ldots,x_e$ positive real numbers (the limiting sizes) and $y_1,\ldots,y_e$ arbitrary real numbers (the limiting translations).
\setcounter{step}{0}
\begin{step}
We invoke Lemma \ref{Faye}\eqref{Daphne} in the Appendix to see that we can confine our search to $x_1,\ldots,x_e \geq 1/2$.
For as long as $x_j \leq 1/2$, the lemma shows that we can always arrange for $y_j$ to be such that $\Omega(x_j^{-1},1+2 x_j^{-1} y_j)=0$, and we note that $\Omega(x_j^{-1},0)=1$ for all $x_j \in (0,1/2]$.
Thus we can increase $x_j$ to $1/2$ to lower \eqref{Abigail} through the $-\frac{2^{e+1}}{3^e} \prod_{j=1}^e x_j$ term while keeping the other terms constant.
\end{step}
\begin{step}
Now we invoke Lemma \ref{Faye}\eqref{Edith} from the Appendix to see that for fixed $x_1,\ldots,x_e$, we minimize the last term of \eqref{Abigail} if and only if we arrange that $y_j \in \{\frac{1 - 2 x_j}{4} + \frac{m}{2} : m \in \Z\}$ for each $j \in \eseg$.
The problem is thus reduced to the minimization of
\[
\Lambda(x_1,\ldots,x_e)=-\frac{2^{e+1}}{3^e} \prod_{j=1}^e x_j + 2 \prod_{j=1}^e \Phi(x_j) + \prod_{j=1}^e \Psi(x_j)
\]
for $x_1,\ldots,x_e$ positive real numbers, where $\Phi(x)$ is as defined in \eqref{Mary}, and 
\[
\Psi(x)=\sum_{n \in \Z} \max\left(0,1-\frac{|2 n+1|}{2 x}\right)^2
\]
for $x>0$.
Note that $\Psi$ is differentiable for $x\not=0$ and is $C^\infty$ for $x\not\in\Z+1/2$.
\end{step}
\begin{step}
We can assume that each $x_j > 1/2$ because otherwise the partial derivative of $\Lambda$ with respect to $x_j$ would be negative.
\end{step}
\begin{step}
We can assume that $x_1,\ldots,x_e \in (1/2,3)$: Lemma \ref{Laura} in the Appendix shows that $\Lambda(x_1,\ldots,x_e) \geq \Phi(x_1)\cdots\Phi(x_e)$, and note that $\Phi(x)$ is nondecreasing for $x > 1/2$, that $\Phi(1/2)=1$, $\Phi(3)=19/9 > 2$, and $\Lambda(1,\ldots,1)=2-2(2/3)^e+(1/2)^e < 2$.
This proves that a global minimum exists and lies in $(1/2,3)^e$: the closure of $(1/2,3)^e$ is compact and $\Lambda$ is continuous thereupon.
\end{step}
\begin{step}
Suppose that $(\sigmas)$ is a global minimizer of $\Lambda$.
Then the partial derivatives of $\Lambda$ must vanish there, whence
\begin{equation}\label{Rachel}
u(\sigma_k) \prod_{j=1}^e U(\sigma_j) + \frac{1}{2} v(\sigma_k) \prod_{j=1}^e  V(\sigma_j) = 1
\end{equation}
where $u(x)=\frac{x \Phi^\prime(x)}{\Phi(x)}$, $U(x)=\frac{3 \Phi(x)}{2 x}$, $v(x)=\frac{x \Psi^\prime(x)}{\Psi(x)}$, and $V(x)=\frac{3 \Psi(x)}{2 x}$ for $x>1/2$.
\end{step}
\begin{step}
We can assume $\sigmas \in (1,3)$: see \eqref{Rachel} and note that $u(x)=0$ for $x \in (1/2,1]$, $\frac{1}{2} v(x) V(x) < 1$ for $x \in (1/2,1]$, and $V(x) < 1$ for $x \in (1/2,3)$.
\end{step}
\begin{step}
It is not difficult to show that $U(x)$ strictly decreases and $V(x)$ strictly increases on $[1,3]$ with $U(3)=19/18$ and $V(1)=3/4$, and that $0 \leq u(x) < 1 \leq v(x)$ for $x \in [1,3]$.
Thus \eqref{Rachel} shows that we must have $u(\sigma_k) < \left(\frac{18}{19}\right)^e \left(1-\frac{3^e}{2^{2e+1}}\right)$ for all $k$.
This forces $u(\sigma_k) < 7/10$ for all $k$, and examination of the function $u$ shows that $u(x)\geq 7/10$ for $x\in[5/2,3]$, and so we must have $\sigma_k < 5/2$ for all $k$.

Now one can repeat the argument on the interval $[1,5/2]$ to show that every $\sigma_k < 2$, then repeat it again on $[1,2]$ to show $\sigma_k < 5/4$.  Further repetitions give $\sigma_k < 6/5$, $\sigma_k < 13/11$, and $\sigma_k < 7/6$.
Since $U(x) > 4/3$ while $v(x), V(x) \geq 0$ for $x \in (1,7/6)$, we have $u(\sigma_k) < (3/4)^e$ for all $k$, and since $u(x)\geq 8(x-1)/3$ for $x \in [1,7/6]$, this means that $\sigma_k < 1+\frac{3^{e+1}}{2^{2 e+3}}$ for all $k$.
\end{step}
\begin{step}
So we have $1 < \sigma_k < \min(\frac{7}{6},1+\frac{3^{e+1}}{2^{2 e +3}})$ for all $k$.
Consider the products in \eqref{Rachel}: since each $\sigma_j \in (1,7/6)$, we have $(4/3)^e < \prod_{j=1}^e U(\sigma_j) < (3/2)^e$ while $(3/4)^e < \prod_{j=1}^e V(\sigma_j) < (7/8)^e$.
We now claim that for a given $A \in [(4/3)^e,(3/2)^e]$ and $B \in [(3/4)^e,(7/8)^e],$ there is at most one solution $x \in (1,7/6)$ to
\[
A u(x) + \frac{1}{2}{B} v(x)=1,
\]
which will force $\sigma_1=\cdots=\sigma_e$.
For if we set $w(x)=A u(x)+(B/2) v(x)$, then we can show $w^\prime(x) > 0$ for $x \in (1,7/6)$: on this interval, we have $u(x)=4(x-1)/(3 x^2-4 x+2)$ and $v(x)=2/(2 x-1)$, and it is not difficult to show that $u^\prime(x) > 3/2$ and $v^\prime(x) > -4$, so that $w^\prime(x) > \frac{3 A}{2} -2 B \geq (3/2) (4/3)^e - 2 (7/8)^e > 0$.
\end{step}
\begin{step}\label{Ursula}
Now our problem is reduced to the single-variable minimization of $T(x)=\Lambda(x,\ldots,x)=-2\left(\frac{2 x}{3}\right)^e + 2 \Phi(x)^e + \Psi(x)^e$ for $x\in (1,1+\frac{3^{e+1}}{2^{2 e+3}})$.
It is not hard to see that $d T/d x$ vanishes if and only if  $x^{3 e} -\frac{3^e}{2^{e-2}} (x-1) (3 x^2-4 x+2)^{e-1} - \frac{3^e}{2^{2 e}} (2x-1)^{2 e-1}$ does.
Meanwhile we claim that the second derivative of $T$ is strictly positive on our interval: by dropping some nonnegative terms we see that 
\[
\frac{d^2T}{d x^2}(x)\geq -e(e-1)\frac{2^{e+1} x^{e-2}}{3^e} + e \frac{8(3-2 x)}{x^4} \Phi(x)^{e-1} + e \frac{3-4x}{x^4} \Psi(x)^{e-1}.
\]
Thus for $e=1$, the second derivative is at least $\frac{27-20 x}{x^4}$, which is strictly positive on our interval $\left(1,1+\frac{9}{32}\right)$.
For $e=2$, we can use the fact that $0 \leq \Psi(x) \leq 1 \leq \Phi(x)$ on our interval $\left(1,1+\frac{27}{128}\right)$ to show that the second derivative is at least $-\frac{16}{9}+\frac{54-40 x}{x^4}$, which is strictly positive on our interval.
Finally, if $e \geq 3$, we have $1+\frac{3^{e+1}}{2^{2 e+3}} < \frac{7}{6}$, and so on our interval $(1,1+\frac{3^{e+1}}{2^{2 e+3}})$ we have $\frac{8(3-2 x)}{x^4} > \frac{20}{7}$, $\Phi(x) \geq 1$, $\frac{3-4 x}{x^4} \geq -1$, and $0 \leq \Psi(x) \leq 1$, so that $\frac{d^2 T}{d x^2} (x) \geq -\frac{8}{9} \left(\frac{7}{9}\right)^{e-2} e(e-1) + \frac{13}{7} e > 0$.
This proves that there is a unique global minimum for this single-variable problem: the unique root $b_e$ of $x^{3 e} -\frac{3^e}{2^{e-2}} (x-1) (3 x^2-4 x+2)^{e-1} - \frac{3^e}{2^{2 e}}(2x-1)^{2 e-1}$ lying in $(1,1+\frac{3^{e+1}}{2^{2 e+3}})$.
Thus we have found that global minima are obtained precisely when $\sigma_1=\cdots=\sigma_e=b_e$ and $\tau_j \in \{\frac{1 - 2 b_e}{4} + \frac{m}{2} : m \in \Z\}$ for each $j \in \eseg$.
\end{step}

\section{Proof of Theorem \ref{Orlando}}\label{Albert}

For $A_e$ and $B_e$ as defined in Theorem \ref{Orlando}, we first show that $B_e < A_e$ for each $e\geq 1$.
Given the minimizing conditions described in Theorems \ref{Harold}\eqref{Karl} and \ref{Earl}\eqref{Ian}, it suffices to show that
\[
-\frac{2^{e+1}}{3^e} x^e + 2 \Omega\left(\frac{1}{x},0\right)^e + \Omega\left(\frac{1}{x},\frac{1}{2 x}\right)^e < -\frac{2^e}{3^e} x^e + 2 \Omega\left(\frac{1}{x},0\right)^e.
\]
for $x \in [1,3/2]$.  This follows if $\Omega(\frac{1}{x},\frac{1}{2 x}) < \frac{2 x}{3}$, or using the definition \eqref{Hilda} of $\Omega$, if $4 x^3 - 12 x^2 + 12 x- 3 > 0$ for $x \in [1,3/2]$, which is routine to show.

Now we show that $B_{e_1+e_2} < B_{e_1} B_{e_2}$ for any $e_1, e_2 \geq 1$.
We use a technical Lemma \ref{Henry} below, which provides bounds on the $B_e$.
It shows that if $e_1 \geq 5$ or $e_2 \geq 5$, then $B_{e_1} B_{e_2} > \sqrt[4]{2} > B_{e_1+e_2}$.
So we may confine ourselves to the case where $1 \leq e_1 \leq e_2 \leq 4$.
If we define $B_0=1$, then the bounds in Lemma \ref{Henry} also show us that
\[
\frac{B_1}{B_0} > \frac{B_2}{B_1} > \frac{B_3}{B_2} > \frac{B_4}{B_3} > \frac{B_5}{B_4}.
\]
Thus we note that $B_{e_1} B_{e_2} > B_{e_1-1} B_{e_2+1}$.
If $e_1+e_2 > 5$, we can repeat this argument to show that $B_{e_1} B_{e_2}$ is greater than $B_{e_1+e_2-5} B_5$, which we have already shown to exceed $B_{e_1+e_2}$.
On the other hand, if $e_1+e_2 \leq 5$, repetition of the same argument produces $B_{e_1} B_{e_2} > B_0 B_{e_1+e_2} = B_{e_1+e_2}$.

\begin{lemma}\label{Henry}
For each $e\geq 1$, let $B_e$ be the minimum asymptotic ratio of $\Lfour$ to $\Ltwo$ norm achievable by a family of $e$-variable quadratic multiplicative character polynomials as described in Theorem \ref{Earl}\eqref{Ian}.
Then
\begin{enumerate}[(i)]
\item\label{Barbara} $\sqrt[4]{103/89} < B_1 < \sqrt[4]{22/19}$,
\item $\sqrt[4]{86/65} < B_2 < \sqrt[4]{75/56}$,
\item $\sqrt[4]{142/95} < B_3 < \sqrt[4]{116/77}$,
\item $\sqrt[4]{100/61} < B_4 < \sqrt[4]{107/65}$,
\item\label{Helen} $\sqrt[4]{7/4} < B_5 < \sqrt[4]{128/73}$, and
\item $\sqrt[4]{7/4} < B_e < \sqrt[4]{2}$ for all $e \geq 6$.
\end{enumerate}
\end{lemma}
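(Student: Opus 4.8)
The plan is to reduce the whole statement to the one-variable picture set up in Step~\ref{Ursula} of the proof of Theorem~\ref{Earl}\eqref{Ian}. There we have $B_e^4=T(b_e)$, where
\[
T(x)=-2\left(\frac{2x}{3}\right)^{e}+2\,\Phi(x)^e+\Psi(x)^e ,
\]
$b_e$ is the unique root in $I_e:=\bigl(1,\,1+\tfrac{3^{e+1}}{2^{2e+3}}\bigr)$ of $P_e(x)=x^{3e}-\tfrac{3^e}{2^{e-2}}(x-1)(3x^2-4x+2)^{e-1}-\tfrac{3^e}{2^{2e}}(2x-1)^{2e-1}$, and $T$ is strictly convex on $I_e$. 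Since $3^{e+1}/2^{2e+3}<\tfrac12$ for every $e\geq 1$, we have $I_e\subseteq(1,\tfrac32)$, and on this range only finitely many terms of $\Phi$ and $\Psi$ survive, giving the closed forms $\Phi(x)=(3x^2-4x+2)/x^2$ and $\Psi(x)=(2x-1)^2/(2x^2)$. A short computation with these forms gives $T'(x)=-\tfrac{2^{e+1}e}{3^e x^{2e+1}}P_e(x)$ on $I_e$, so $T'$ and $P_e$ have opposite signs there; since $T$ is convex with minimizer $b_e$, it follows that $P_e>0$ on $(1,b_e)$ and $P_e<0$ on $\bigl(b_e,\,1+\tfrac{3^{e+1}}{2^{2e+3}}\bigr)$. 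Hence one can trap $b_e$ between explicit rationals $\ell_e<u_e$ in $I_e$ simply by checking $P_e(\ell_e)>0$ and $P_e(u_e)<0$ in exact rational arithmetic.

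For each $e\in\{1,2,3,4,5\}$ I would choose such $\ell_e<u_e$ and argue as follows. Because $b_e$ minimizes the convex function $T$ on $I_e$ and $\ell_e<b_e$, we get the strict upper bound $B_e^4=T(b_e)<T(\ell_e)$, and $T(\ell_e)$ is an explicit rational via the closed forms above; taking $\ell_e$ close enough to $b_e$ puts $T(\ell_e)$ below the stated upper bound. For the lower bound, convexity places $T$ above its tangent line at $\ell_e$, so
\[
B_e^4=T(b_e)\geq T(\ell_e)+T'(\ell_e)(b_e-\ell_e)\geq T(\ell_e)+T'(\ell_e)(u_e-\ell_e),
\]
the last inequality using $T'(\ell_e)=-\tfrac{2^{e+1}e}{3^e\ell_e^{2e+1}}P_e(\ell_e)<0$ together with $0<b_e-\ell_e<u_e-\ell_e$; this bound is again an explicit rational, and choosing $\ell_e$ and $u_e$ close enough makes it exceed the stated lower bound. (Equivalently, one may eliminate $b_e^{3e}$ using $P_e(b_e)=0$ to obtain the identity $b_e^{2e}T(b_e)=2(3b_e^2-4b_e+2)^{e-1}(3b_e^2-8b_e+6)+\tfrac{1}{2^e}(2b_e-1)^{2e-1}(2b_e-3)$ and bound its right-hand side directly over $[\ell_e,u_e]$.) This is a finite, if tedious, verification repeated for each of the five values of $e$.

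For $e\geq 6$ a uniform argument works. Since $T$ is strictly convex on $I_e$ with minimizer $b_e>1$, it is strictly decreasing on $[1,b_e]$, so by continuity $B_e^4=T(b_e)<T(1)=2+2^{-e}-2(2/3)^e$, and $T(1)<2$ because $2^{-e}<2(2/3)^e$ amounts to $3^e<2\cdot4^e$; thus $B_e<\sqrt[4]{2}$. For the matching lower bound, on $I_e$ one has $\Phi(x)\geq 1$ and $\Psi(x)\geq\tfrac12$ (the latter since $\Psi(x)-\tfrac12=(3x-1)(x-1)/(2x^2)\geq 0$ for $x\geq 1$), hence
\[
T(b_e)\geq 2+2^{-e}-2\left(\frac{2b_e}{3}\right)^{e}.
\]
Now $b_e<1+\tfrac{3^{e+1}}{2^{2e+3}}$ gives $\tfrac{2b_e}{3}<\tfrac23+\tfrac{3^e}{2^{2e+2}}$, so it remains to check that $2\bigl(\tfrac23+\tfrac{3^e}{2^{2e+2}}\bigr)^{e}<2^{-e}+\tfrac14$ for all $e\geq 6$: this is a direct numerical check for $e=6$ and $e=7$, and for $e\geq 8$ one has $\tfrac23+\tfrac{3^e}{2^{2e+2}}<\tfrac34$, so the left side is below $2(3/4)^e<\tfrac14$. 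Then $B_e^4=T(b_e)>2+2^{-e}-(2^{-e}+\tfrac14)=\tfrac74$, strictly since $b_e$ lies strictly inside $I_e$; thus $B_e>\sqrt[4]{7/4}$.

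The main obstacle is the bookkeeping for $e\leq 5$: when $e$ equals $1$, $4$, or $5$ the claimed enclosing interval for $B_e^4$ is narrow while $T$ is extremely flat near its minimizer, so the rational bracket $[\ell_e,u_e]$ for $b_e$ must be taken tight enough — and the tangent slope $T'(\ell_e)$ controlled finely enough — for the resulting rational estimates to separate $B_e^4$ from the stated endpoints. Everything involved is a finite computation, carried out most conveniently with the aid of a computer, as the statement notes.
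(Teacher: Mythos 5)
Your proposal is correct and its overall scheme matches the paper's (bracket the minimizer $b_e$ in a rational interval, then bound $T$ on that interval; handle $e\ge 6$ uniformly using $b_e<1+3^{e+1}/2^{2e+3}$ together with $\Phi\ge 1$, $\Psi\ge\frac12$), but the details diverge in two substantive ways. For $e\le 5$, the paper locates the minimizer by exhibiting a triple $x_1<x_2<x_3$ with $T(x_2)<T(x_1),T(x_3)$ and then takes $B_e^4\le T(x_2)$ as upper bound and uses monotonicity of $\Phi$ and $\Psi$ for the lower bound $B_e^4>-\frac{2^{e+1}}{3^e}x_3^e+2\Phi(x_1)^e+\Psi(x_1)^e$; you instead bracket $b_e$ by rational sign changes of $P_e$ (using the verified identity $T'(x)=-\frac{2^{e+1}e}{3^e x^{2e+1}}P_e(x)$ on $(1,3/2)$, where the finite sums collapse to $\Phi(x)=(3x^2-4x+2)/x^2$ and $\Psi(x)=(2x-1)^2/(2x^2)$), then take $T(\ell_e)$ as upper bound and the tangent line at $\ell_e$ evaluated at $u_e$ as lower bound. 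Both are valid; yours trades the monotonicity trick for convexity and makes the bracketing an exact rational sign test. For $e\ge 6$, your upper bound via $T(b_e)<T(1)=2+2^{-e}-2(2/3)^e<2$ (using that $T$ strictly decreases on $(1,b_e]$, itself a consequence of the established $T''>0$ and $T'(b_e)=0$) is a genuine simplification of the paper's binomial-expansion estimate of $T(1+c_e)$; the lower bound $T(b_e)\ge 2+2^{-e}-2(2b_e/3)^e$ is essentially the paper's. One thing to note: like the paper (which only lists the triples), you defer the explicit rational arithmetic for $e\le 5$; the deduction is sound, but a complete proof would still have to exhibit $\ell_e,u_e$ and verify $P_e(\ell_e)>0>P_e(u_e)$ together with the resulting rational inequalities. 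Also, the threshold $e\ge 8$ for $\frac23+\frac{3^e}{2^{2e+2}}<\frac34$ actually holds already at $e\ge 4$; you need $e\ge 8$ only for the subsequent bound $2(3/4)^e<\frac14$, which is indeed tight there.
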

\begin{proof}
By Step \ref{Ursula} of the proof of Theorem \ref{Earl}\eqref{Ian} in Section \ref{George}, for each $e \geq 1$ the quantity $B_e^4$ is the minimum of function
\[
T(x)=-\frac{2^{e+1}}{3^e} x^e + 2 \Omega\left(\frac{1}{x},0\right)^e + \Omega\left(\frac{1}{x},\frac{1}{2 x}\right)^e
\]
on the interval $(1,1+\frac{3^{e+1}}{2^{2 e+3}})$, upon which the second derivative of $T$ is shown to be positive.
Thus if we can find $x_1$, $x_2$, $x_3$ in this interval with $x_1 < x_2 < x_3$ and $T(x_2) < T(x_1), T(x_3)$, then we will have shown that the minimizing value of $x$ lies in the interval $(x_1,x_3)$.
Then we can use $B_e^4 \leq T(x_2)$ for our upper bound and, by the monotonicity of $\Omega(\frac{1}{x},0)$ and $\Omega(\frac{1}{x},\frac{1}{2 x})$, we can use
\[
B_e^4 > -\frac{2^{e+1}}{3^e} x_3^e + 2 \Omega\left(\frac{1}{x_1},0\right)^e + \Omega\left(\frac{1}{x_1},\frac{1}{2 x_1}\right)^e
\]
as a lower bound.
We use this technique to prove bounds in \eqref{Barbara}--\eqref{Helen}. 
The calculations done by hand are tedious, so here we simply state choices of the triple $(x_1,x_2,x_3)$ that establish stricter bounds than the ones we claim above: for $B_1$, use $(55/52,128/121,73/69)$; for $B_2$, use $(18/17,17/16,16/15)$; for $B_3$, use $(21/20,20/19,19/18)$; for $B_4$, use $(26/25,25/24,24/23)$; and for $B_5$, use $(36/35,35/34,34/33)$.

Henceforth assume that $e \geq 6$, and let $b_e$ be the unique value in $(1,1+\frac{3^{e+1}}{2^{2 e+3}})$ such that $T(b_e)=B_e^4$.
Now note that $\Omega(\frac{1}{x},0) \geq 1$ and $\Omega(\frac{1}{x},\frac{1}{2 x}) \geq \frac{1}{2}$ for $x \geq 1$, and that $\frac{2 b_e}{3} < \frac{2^{15}+3^7}{3\cdot 2^{14}} < 1$, so that
\[
B_6^4 > -2\left(\frac{2^{15}+3^7}{3\cdot 2^{14}}\right)^6 + 2 + \frac{1}{2^6} > \frac{7}{4},
\]
and if $e \geq 7$, then 
\[
B_e^4 > -2\left(\frac{2^{15}+3^7}{3\cdot 2^{14}}\right)^7 + 2 > \frac{7}{4}.
\]
This proves our lower bound on $B_e$ when $e \geq 6$.

To prove our upper bound on $B_e$ when $e \geq 6$, write $b_e=1+c_e$ with $0 < c_e < \frac{3^{e+1}}{2^{2 e+3}}$, and use the definition \eqref{Hilda} of $\Omega$ and the fact that $1/(1+c_e) > 1-c_e$ to estimate $B_e=T(b_e)$ as 
\begin{align*}
B_e^4
& < -\frac{2^{e+1}}{3^e} + 2 (1+2 c_e^2)^e + \frac{1}{2^e} (1+c_e)^{2 e} \\
& \leq -\frac{2^{e+1}}{3^e} + 2 + 4 e c_e^2 + 2^{e+3} c_e^4 + \frac{1}{2^e} (1+c_e)^{2 e},
\end{align*}
where the second inequality follows from a crude approximation with the binomial expansion.  Now note that $2^{e+3} c_e^4 < 3^{4 e+4}/2^{7 e+9} < 2^e/(6 \cdot 3^e)$ and $4 e c_e^2 < e 3^{2 e+2}/2^{4 e+4} < (5 \cdot 2^e)/(4 \cdot 3^e)$, so that
\[
B_e^4 < 2 -\frac{7}{12} \cdot \frac{2^e}{3^e} + \frac{1}{2^e} (1+c_e)^{2 e},
\]
which will imply $B_e^4 < 2$ if we can show that $\left(\frac{3(1+c_e)^2}{4}\right)^e \leq \frac{7}{12}$.
Given that $c_e \leq 3^7/2^{15} < 1/10$, the quantity being raised to the $e$th power on the left hand side is less than $1$, so it suffices to show that $\left(\frac{3(11/10)^2}{4}\right)^6 \leq \frac{7}{12}$.
\end{proof}

\appendix
\section{Proofs of Auxiliary Results}\label{Sarah}
Here we collect, for the sake of completeness, technical results used in our proofs.
The first is a bound on a character sum used in the proofs of Theorems \ref{Gabriel}\eqref{Fabian}, \ref{Harold}\eqref{Yakov}, and \ref{Earl}\eqref{Nancy} in Section \ref{Tamar}.
\begin{lemma}\label{Xavier}
Let $\grp$ be a finite abelian group, $n$ a positive integer, and $\pi_1,\ldots,\pi_n\in\Hom(\Z,\grp)$ such that $\im\pi_1+\ldots+\im\pi_n$ is the internal direct sum of $\im\pi_1,\ldots,\im\pi_n$ in $\grp$.
Let $\pi \in \Hom(\Zn,\grp)$ with $\pi(u_1,\ldots,u_n)=\pi_1(u_1)+\ldots+\pi_n(u_n)$ for all $(u_1,\ldots,u_n) \in \Z^n$, and let $U=U_1\times\cdots\times U_n$ be a box in $\Zn$.  Then
\[
T=\sum_{\kappa,\lambda,\mu,\nu \in \grpchars} \Bigg|\sums{a,b,c,d \in U \\ a+b=c+d} \conj{\kappa(\pi(a)) \lambda(\pi(b))} \mu(\pi(c)) \nu(\pi(d))\Bigg| 
\]
is no greater than
\[
64^n \grpcard^4 \prod_{j=1}^n \max\left(1,\frac{\card{U_j}}{\card{\im\pi_j}}\right)^3 \prod_{j=1}^n (1+\log\card{\im\pi_j})^3
\]
\end{lemma}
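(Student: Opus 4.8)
The plan is to reduce, via the direct-sum hypothesis, to a single-variable exponential-sum estimate, and then to prove that estimate by keeping---rather than discarding---the cancellation in an auxiliary summation variable. For the reduction, I would set $K=\im\pi_1+\cdots+\im\pi_n$, which by hypothesis is the internal direct sum $\im\pi_1\oplus\cdots\oplus\im\pi_n$, and observe that each $\im\pi_j$ is cyclic of order $m_j:=\card{\im\pi_j}$ (being a quotient of $\Z$), with $\pi_j(a)$ depending on $a$ only modulo $m_j$. Every character occurring in $T$ is evaluated solely on elements of $K$, so the inner sum depends on $\kappa,\lambda,\mu,\nu$ only through their restrictions to $K$; since restriction $\grpchars\to\widehat K$ is surjective and $\quotind$-to-one, and $\widehat K\cong\widehat{\im\pi_1}\times\cdots\times\widehat{\im\pi_n}$, one rewrites $T$ as $\quotind^4$ times a sum over quadruples of characters of $K$, each of which splits into coordinates. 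As the constraint $a+b=c+d$ on $U=U_1\times\cdots\times U_n$ and the character products are all coordinate-wise, the inner sum factors over $j$, yielding
\[
T=\grpcard^4\prod_{j=1}^n\frac{S_j}{m_j^4},\qquad
S_j=\sum_{k,l,p,r\in\Z/m_j\Z}\Bigg|\sums{a,b,c,d\in U_j\\a+b=c+d}\exp\!\Big(\tfrac{2\pi i(-ka-lb+pc+rd)}{m_j}\Big)\Bigg|.
\]
It therefore suffices to prove the single-variable bound $S\le 64\,m^4\max(1,\ell/m)^3(1+\log m)^3$, where $S$ is the analogous sum for a modulus $m$ and a segment $V\subseteq\Z$ of length $\ell$; here $m=1$ is immediate since then $S=\#\{a+b=c+d\text{ in }V\}\le\ell^3$, and the general problem is translation-invariant in $V$, so one may take $V=\{0,1,\ldots,\ell-1\}$.

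For the single-variable bound I would first note that the inner sum depends on $(k,l,p,r)$ only through the differences $A=l-k$, $B=p-r$, $C=r-l$ (substituting $b=c+d-a$ turns the phase into $Aa+Bc+C(c+d)$), and that the map $(k,l,p,r)\mapsto(A,B,C)$ has all fibres of size $m$; hence $S=m\sum_{A,B,C}|\Xi(A,B,C)|$ with
\[
\Xi(A,B,C)=\sum_{y}\exp\!\Big(\tfrac{2\pi iCy}{m}\Big)\,\widehat{1_{V_y}}(A)\,\widehat{1_{V_y}}(B),\qquad V_y=V\cap(y-V),
\]
where $\widehat{1_W}$ denotes the mod-$m$ discrete Fourier transform of the indicator of a segment $W$. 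The essential point is that $\Xi(A,B,C)$ is the value at frequency $C$ of the DFT of $y\mapsto\widehat{1_{V_y}}(A)\widehat{1_{V_y}}(B)$; retaining this Fourier structure in $y$ is what avoids the loss of a factor of $m$ that a naive term-by-term bound would incur. I would split the range of $y$---namely $V+V$, a segment of length $2\ell-1$---into its left and right halves, on each of which $V_y$ is a segment with one fixed endpoint and the other moving linearly in $y$. Summing the geometric series, $\widehat{1_{V_y}}(A)$ is then, on each half, a coefficient of modulus at most $m/(4|A|_m)$ times $\exp(2\pi iAy/m)$ plus another coefficient of the same size whenever $A\neq0$ (with $|A|_m$ the distance from $A$ to $0$ in $\Z/m\Z$), and is $\pm y+\text{const}$ when $A=0$. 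Thus on each half $y\mapsto\widehat{1_{V_y}}(A)\widehat{1_{V_y}}(B)$ is an explicit short linear combination of exponentials $\exp(2\pi i\omega y/m)$, possibly multiplied by $y$ or $y^2$ when $A$ or $B$ vanishes; applying the DFT sends each exponential piece to a shifted $\widehat{1_{\text{half}}}$, while the factors of $y$, $y^2$ cost only powers of $\ell$. Everything then collapses to the classical harmonic-sum estimate $\sum_{C\in\Z/m\Z}\min(w,\tfrac{m}{2|C|_m})\le 4m\max(1,w/m)(1+\log m)$ for a segment of length $w$, together with $\sum_{A\neq0}1/|A|_m\le 2(1+\log m)$. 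Adding the four contributions ($A,B$ both nonzero; exactly one of them zero, two cases; and both zero) gives $\sum_{A,B,C}|\Xi(A,B,C)|\le 32\,m^3\max(1,\ell/m)^3(1+\log m)^3$, hence $S\le 32\,m^4\max(1,\ell/m)^3(1+\log m)^3$, comfortably inside the target; substituting back into the product formula for $T$ gives the lemma with the stated constant $64^n\grpcard^4$.

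The hard part will be this single-variable estimate, and within it the degenerate frequencies $A=0$ or $B=0$: there $\widehat{1_{V_y}}$ is not constant-plus-exponential but a genuine linear (respectively, the product quadratic) function of $y$, and these are exactly the ``diagonal'' frequencies at which one summand $\big|\sum_{a+b=c+d}\cdots\big|$ can be as large as $\sim\ell^3$. One must verify, using bounds for DFTs of intervals weighted by $y$ and $y^2$, that after summing over all $(A,B,C)$ these diagonal terms still contribute only $O\!\big(m^3\max(1,\ell/m)^3(\log m)^3\big)$. The remaining ingredients---the harmonic-sum inequalities, the fibre count in the $(k,l,p,r)\mapsto(A,B,C)$ reparametrisation, and the translation normalisation of $V$---are routine.
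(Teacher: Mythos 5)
Your reduction step matches the paper's almost exactly: write $K=\bigoplus_j\im\pi_j$, observe that the restriction map $\grpchars\to\subgrpchars$ is $\quotind$-to-one and that everything inside the absolute values depends on $\kappa,\lambda,\mu,\nu$ only through their restrictions to $K$, factor $\subgrpchars\cong\prod_j\widehat{\im\pi_j}$ to split the inner sum coordinate-wise, and thereby reduce to the case $n=1$ with $\pi$ surjective onto a cyclic group identified with $\ZmZ$. The paper then makes the change of variables $(w,x,y,z)\mapsto(w,w-x,y-w,z-w)$ (fibers of size $m$), translation-normalizes $U$, and at that point simply cites \cite[Lemma~2.2]{Jedwab-Katz-Schmidt-a} for the bound $64\,m\max(m,\card{U})^3(1+\log m)^3$ on the remaining three-variable sum. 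Your reparametrization to $(A,B,C)=(l-k,p-r,r-l)$ is equivalent (same fiber size $m$, same three free frequencies after eliminating $b$), so up to this point you and the paper are doing the same thing.

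Where you genuinely diverge is that you attempt to re-derive the cited single-variable estimate from scratch: you rewrite the inner sum as $\Xi(A,B,C)=\sum_y e(Cy/m)\,\widehat{1_{V_y}}(A)\widehat{1_{V_y}}(B)$ with $V_y=V\cap(y-V)$, keep the $y$-summation as a DFT rather than bounding termwise, split the range of $y$ into two halves on which $V_y$ has one fixed endpoint, expand $\widehat{1_{V_y}}$ as a short sum of exponentials in $y$ (with coefficients of size $\lesssim m/\card{A}_m$), and finish with the harmonic-sum bounds $\sum_{C}\min(w,m/(2\card{C}_m))\lesssim m\max(1,w/m)\log m$ and $\sum_{A\neq0}1/\card{A}_m\lesssim\log m$. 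This is a plausible and sensible plan, and keeping the Fourier cancellation in $y$ is indeed the right idea to avoid losing a factor of $m$. However, it is presented as a sketch: the degenerate cases $A=0$, $B=0$, or $A=B=0$ (where $\widehat{1_{V_y}}$ is linear or quadratic in $y$, and where individual inner sums can be as large as $\ell^3$) are not worked through, the constant $32$ is asserted rather than verified, and you yourself flag these as the hard part. Since the paper offloads precisely this step to an external lemma, I cannot compare your derivation against a proof in the paper, but as written your argument is not complete; either carry out the degenerate-frequency bookkeeping in full, or cite \cite[Lemma~2.2]{Jedwab-Katz-Schmidt-a} as the paper does. The reduction portion, though, is correct and essentially identical to the paper's.
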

\begin{proof}
Write $\subgrp=\bigoplus_{j=1}^n \im\pi_j$, so that $\subgrpchars=\prod_{j=1}^n \widehat{\im\pi_j}$.  Each character of $\subgrp$ extends to $\quotind$ characters of $\grp$, and for any $\eta \in \widehat{\im\pi_j}$, let $\eta^\prime \in \Zchars$ be $\eta\circ\pi_j$, so that
\[
T = \quotind^4 \prod_{j=1}^n \sum_{\kappa_j,\lambda_j,\mu_j,\nu_j \in \widehat{\im\pi_j}} \Bigg|\sums{a_j,b_j,c_j,d_j \in U_j \\ a_j+b_j=c_j+d_j}  \conj{\kappa_j^\prime(a_j)\lambda_j^\prime(b_j)}\mu_j^\prime(c_j)\nu_j^\prime(d_j)\Bigg|,
\]
and so it suffices to prove the bound when $n=1$ and $\pi$ is surjective.
In this case $\grp$ is a finite cyclic group, which we identify with $\ZmZ$ by identifying $\pi(1)$ with $1 \in \ZmZ$.
Then we set $\epsilon_a=\exp(2\pi i a/m)$ for every $a \in \ZmZ$, and note that $\grpchars$ is the set of maps $a \mapsto \epsilon_{x a}$ with $x \in \ZmZ$.
Thus
\[
T = \sum_{w,x,y,z \in \ZmZ} \Bigg|\sums{a,b,c,d \in U \\ a+b=c+d} \epsilon_{-w a -x b + y c + z d}\Bigg|.
\]
$U$ is a set of consecutive integers in $\Z$, and note that translation of $U$ does not influence the magnitude of the inner sum in $T$, so without loss of generality, we assume that $U=\{0,1,\ldots,\card{U}-1\}$.
Then reparameterize the outer sum in $T$ with $x^\prime=w-x$, $y^\prime=y-w$, $z^\prime=z-w$, and $w$ to obtain
\[
T = m \sum_{x^\prime,y^\prime,z^\prime \in \ZmZ} \Bigg|\sums{a,b,c,d \in U \\ a+b=c+d} \epsilon_{-x^\prime b + y^\prime c + z^\prime d}\Bigg|,
\]
which is not more than $64 m \max(m,\card{U})^3 (1+\log m)^3$ by \cite[Lemma 2.2]{Jedwab-Katz-Schmidt-a}.
\end{proof}
The next result is used in the proofs of Section \ref{Tamar} to understand the asymptotic behavior of the $\Ltwo$ norm for multiplicative character polynomials.
\begin{lemma}\label{Caroline}
Let $\pfam$ be a size-stable family of $e$-variable multiplicative character polynomials with $\Fqi$, $\Si$, $\ti$, and $\alphai$ respectively the field, support, translation, and arrangement of $\fio$ for each $\iota\in I$.
Then there is a $Q$ and an $N$ such that for all $\iota \in I$ with $\qi \geq Q$, we have $\card{\Si\cap(\ker\alphai-\ti)} \leq N$.
Thus $\card{\Si\cap\ker(\alphai-\ti)}/\card{\Si} \to 0$ and $\Ltt{\fio}/\card{\Si} \to 1$ as $\qi\to\infty$.
\end{lemma}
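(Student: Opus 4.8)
The plan is to compute $\Ltt{\fio}$ exactly and then bound its deviation from $\card{\Si}$ uniformly in $\iota$. Let $\chii$ be the (nontrivial) multiplicative character of $\fio$, so that $\fio(z)=\sum_{s\in\Si}\chii(\alphai(s+\ti))z^s$. Since we extend $\chii$ by $\chii(0)=0$, we have $|\chii(x)|^2=1$ for $x\ne 0$ and $|\chii(0)|^2=0$, so
\begin{align*}
\Ltt{\fio}=\sum_{s\in\Si}|\chii(\alphai(s+\ti))|^2
&=\card{\Si}-\card{\{s\in\Si:\alphai(s+\ti)=0\}}\\
&=\card{\Si}-\card{\Si\cap(\ker\alphai-\ti)}.
\end{align*}
Everything therefore reduces to producing a $Q$ and an $N$ with $\card{\Si\cap(\ker\alphai-\ti)}\le N$ whenever $\qi\ge Q$.

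Next I would pin down $\ker\alphai$. As already noted in Section \ref{Tamar}, the epimorphism $\alphai\colon\Ze\to\Fqi=\F_{\pio^e}$ kills $\pio\Ze$ (because $\pio$ annihilates $\Fqi$) and, by a cardinality count, factors through coordinate-wise reduction modulo $\pio$ as an isomorphism $(\Z/\pio\Z)^e\cong\Fqi$; hence $\ker\alphai=\pio\Ze$. Writing $\Si=S_{\iota,1}\times\cdots\times S_{\iota,e}$ with $S_{\iota,k}=\{0,1,\ldots,\card{S_{\iota,k}}-1\}$ and $\ti=(t_{\iota,1},\ldots,t_{\iota,e})$, this gives
\[
\Si\cap(\ker\alphai-\ti)=\prod_{k=1}^{e}\bigl(S_{\iota,k}\cap(\pio\Z-t_{\iota,k})\bigr),
\]
and the $k$th factor counts the integers of a length-$\card{S_{\iota,k}}$ segment lying in a single residue class modulo $\pio$, so it has at most $\card{S_{\iota,k}}/\pio+1$ elements.

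Finally I would invoke size-stability. Since $\piset$ is infinite, $\pio\to\infty$, and for each $k$ we have $\card{S_{\iota,k}}/\pio\to\sigma_k>0$; so there is a $Q$ such that for all $\iota$ with $\qi\ge Q$ and all $k$ we have $\card{S_{\iota,k}}/\pio\le\sigma_k+1$, whence $\card{\Si\cap(\ker\alphai-\ti)}\le\prod_{k=1}^{e}(\sigma_k+2)=:N$, which is the first claim. For the two limits, enlarge $Q$ so that also $\card{S_{\iota,k}}\ge\sigma_k\pio/2$ for $\qi\ge Q$; then $\card{\Si}=\prod_{k}\card{S_{\iota,k}}\to\infty$ as $\qi\to\infty$, and together with the uniform bound $N$ this gives $\card{\Si\cap(\ker\alphai-\ti)}/\card{\Si}\to 0$, after which the displayed identity for $\Ltt{\fio}$ yields $\Ltt{\fio}/\card{\Si}\to 1$. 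No step presents a real obstacle; the only points needing a moment's care are the identification $\ker\alphai=\pio\Ze$ and the observation that size-stability forces $\card{\Si}\to\infty$.
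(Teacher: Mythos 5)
Your proof is correct and follows essentially the same approach as the paper: both identify the intersection $\Si\cap(\ker\alphai-\ti)$ as the obstruction, bound its cardinality uniformly using the fact that $\alphai$ factors through coordinate-wise reduction modulo $\pio$ (equivalently, that $\alphai$ is a bijection on each $\pio\times\cdots\times\pio$ cube), and then use $\card{\Si}\to\infty$. The paper phrases the counting step as covering $\Si$ by $\prod_j(\floor{\sigma_j}+1)$ cubes each meeting $\ker\alphai-\ti$ once, whereas you count residue classes coordinate-by-coordinate to get $\prod_k(\sigma_k+2)$; these are the same argument with slightly different constants, and either suffices.
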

\begin{proof}
Suppose that the limiting sizes for our size-stable family $\pfam$ of multiplicative character polynomials are $\sigmas$.
For each $\iota \in I$, let $\chii$ be the character of $\fio$, so that $\fio(z)=\sum_{s \in \Si} \chii(\alphai(s+\ti)) z^s$, and let $\pio=\qi^{1/e}$, which is the characteristic of the field $\Fqi$ of $\fio$.
Since $\alphai$ in epimorphism, its restriction to each $\pio\times\cdots\times \pio$ cubical box in $\Ze$ is a bijection to $\Fqi$, and by the definition of size-stability, there is some $Q$ such that for every $\qi\geq Q$, the support $\Si=S_{\iota,1} \times \cdots \times S_{\iota,e}$ can be covered with $N=\prod_{j=1}^e \left(\floor{\sigma_j}+1\right)$ such cubes, each of which contains one point of $\ker\alphai-\ti$, so $\card{\Si\cap(\ker\alphai-\ti)} \leq N$.
Since the family is size-stable, $\card{\Si} \to \infty$ as $\qi \to \infty$.
The squared $\Ltwo$ norm of a polynomial is the sum of the squared magnitudes of its coefficients, and $\chii(\alphai(s+\ti))=0$ for $s \in \Si\cap(\ker\alphai-\ti)$ while $|\chii(\alphai(s+\ti))|=1$ for all other $s \in \Si$.
Thus $\Ltt{\fio}=\card{\Si\smallsetminus(\Si\cap(\ker\alphai-\ti))}$, and so $\Ltt{\fio}/\card{\Si} \to 1$ as $\qi\to\infty$.
\end{proof}
Recall from footnote \ref{Eric} of Section \ref{Francis} that we sometimes wish to obtain a Littlewood polynomial from a quadratic character polynomial $f(z)=\sum_{s \in S} \chi(\alpha(s+t)) z^s$, but $f$ may have some coefficients equal to $0$ because an extended nontrivial multiplicative character $\chi$ has $\chi(0)=0$.
More generally, if $\chi$ is a nontrivial multiplicative character (not necessarily quadratic), we may wish to obtain from $f$ a polynomial with coefficients of unit magnitude.
So we replace the zero coefficient for each $z^s$ such that $s \in S\cap(\ker\alpha-t)$ with a coefficient of unit magnitude.
We may choose each replacement coefficient independently of the others, and any polynomial $g$ resulting from such replacements is called a {\it unimodularization} of $f$.
The following corollary to Lemma \ref{Caroline} shows that unimodularizing all the polynomials in a size-stable family of multiplicative character polynomials does not affect asymptotic ratio $\normrat{f}$.
\begin{corollary}\label{Oswald}
Let $\pfam$ be a size-stable family of multiplicative character polynomials over fields $\fields$, and let $\gio$ be a unimodularization of $\fio$ for each $\iota\in I$.  If $r$ is a real number with $r\geq 2$ or if $r=\infty$, then $||\fio||_r/||\gio||_r \to 1$ as $\qi \to \infty$.
\end{corollary}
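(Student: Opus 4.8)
The plan is to treat $\gio$ as a bounded additive perturbation of $\fio$ and to exploit the fact that both $\Lr$ norms grow without bound, so that a bounded error washes out in the ratio.

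First I would set $h_\iota = \gio - \fio$. By the definition of unimodularization, $h_\iota$ has a nonzero coefficient only for those monomials $z^s$ with $s \in \Si \cap (\ker\alphai - \ti)$ — exactly the places where $\fio$ has a zero coefficient that gets replaced — and each such coefficient, being the difference of a unit-magnitude number and $0$, has magnitude $1$. By Lemma \ref{Caroline}, there are $Q$ and $N$ such that $\card{\Si \cap (\ker\alphai - \ti)} \leq N$ whenever $\qi \geq Q$. Hence, for all such $\iota$, the polynomial $h_\iota$ has at most $N$ coefficients, each of magnitude $1$, so $||h_\iota||_\infty \leq N$; since our $\Lr$ norm is taken against a probability measure on the torus, $\Lr$ norms are nondecreasing in $r$, giving $||h_\iota||_r \leq ||h_\iota||_\infty \leq N$ for every $r \in [2,\infty]$.

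Next I would bound the denominator from below. Because $\gio$ is a unimodularization, every coefficient of $\gio$ indexed by $\Si$ has magnitude $1$, so $\Ltt{\gio} = \card{\Si}$, and for $r \geq 2$ (or $r = \infty$) the monotonicity of $\Lr$ norms on a probability space gives $||\gio||_r \geq ||\gio||_2 = \card{\Si}^{1/2}$. Since $\pfam$ is size-stable, $\card{\Si} \to \infty$ as $\qi \to \infty$. Then the triangle inequality for $||\cdot||_r$ (valid for all $r \geq 1$ and for $r = \infty$) yields $\big|\, ||\fio||_r - ||\gio||_r \,\big| \leq ||h_\iota||_r \leq N$ for $\qi \geq Q$, whence
\[
\left| \frac{||\fio||_r}{||\gio||_r} - 1 \right| \leq \frac{N}{||\gio||_r} \leq \frac{N}{\card{\Si}^{1/2}} \longrightarrow 0
\]
as $\qi \to \infty$, which is the claim.

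I do not anticipate a genuine obstacle: the only points requiring care are that the perturbation bound $N$ be uniform over the family — which is precisely what Lemma \ref{Caroline} supplies — and that the $\Lr$-norm inequalities be applied in the right direction (upward monotonicity from $r = 2$ to $r = \infty$ for the lower bound on $||\gio||_r$ and the crude bound on $||h_\iota||_r$, and the triangle inequality for the comparison of $||\fio||_r$ with $||\gio||_r$); both hold because integration is against a normalized measure.
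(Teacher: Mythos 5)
Your proof is correct and takes essentially the same approach as the paper: both invoke Lemma~\ref{Caroline} for a uniform bound $N$ on the number of differing coefficients, bound $\|\fio - \gio\|_r \leq N$, and divide by $\|\gio\|_r \geq \sqrt{\card{\Si}} \to \infty$. The only cosmetic difference is that you bound $\|h_\iota\|_r$ via $\|h_\iota\|_\infty$ and norm monotonicity, whereas the paper applies the $\Lr$ triangle inequality directly to the sum of at most $N$ unit-norm monomials.
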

\begin{proof}
If $u \in \C$ with $|u|=1$ and if $s=(s_1,\ldots,s_e) \in \Z^e$, then $\Lr$ norm of $u z^s=u z_1^{s_1} \cdots z_e^{s_e}$ is $1$.
By Lemma \ref{Caroline} there is an $N$ and a $Q$ such that whenever $\qi\geq Q$, the two polynomials $\fio$ and $\gio$ differ by the sum of $N$ or fewer such monomials, and so by the $\Lr$ triangle inequality, the difference between $||\fio||_r$ and $||\gio||_r$ can not be greater in magnitude than $N$.
Now $||\gio||_r \geq ||\gio||_2=\sqrt{\card{\Si}}$ by monotonicity of $\Lr$ norms and the fact that the squared $\Ltwo$ norm of a polynomial is the sum of the squared magnitudes of its coefficients, and $\card{\Si}\to\infty$ as $\qi\to\infty$ for a size-stable family.
\end{proof}
The next result is used in Section \ref{Valerie} to find the limiting translations that globally minimize the asymptotic ratio of $\Lfour$ to $\Ltwo$ norm for quadratic character polynomials.
\begin{lemma}\label{Faye}
Let $x$ be a fixed nonzero value in $\R$ and let $y$ vary over $\R$.
\begin{enumerate}[(i)]
\item\label{Daphne} If $|x| \geq 2$, the function $\Omega(x,y)$, considered as a function of $y$, achieves a global minimum value of $0$ for $y \in \bigcup_{m \in \Z} [m |x|+1,(m+1)|x|-1]$ and for no other value of $y$.
\item\label{Edith} If $0 < |x| \leq 2$, the function $\Omega(x,y)$, considered as a function of $y$, achieves a global minimum value of
\[
\Omega\left(x,\frac{x}{2}\right)=\sum_{n \in \Z} \max\left(0,1-\left|\left(n+\frac{1}{2}\right)x\right|\right)^2
\]
for $y\in \left\{x\left(m+\frac{1}{2}\right) : m \in \Z\right\}$ and for no other value of $y$.
\end{enumerate}
\end{lemma}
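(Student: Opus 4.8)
\section*{Proof proposal for Lemma \ref{Faye}}

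The plan is to treat $\Omega(x,y)$, for fixed $x\neq0$, as the $|x|$\nobreakdash-periodization of the single ``squared tent'' function $g(t)=\max(0,1-|t|)^2$, so that $\Omega(x,y)=\sum_{n\in\Z}g(y-nx)$. I would begin by recording elementary symmetries: $\Omega(-x,y)=\Omega(x,y)$ (reindex $n\mapsto-n$), so we may assume $x>0$; $\Omega(x,y+x)=\Omega(x,y)$, so $\Omega(x,\cdot)$ has period $x$; and $\Omega(x,x-y)=\Omega(x,y)$ (reindex the sum, using that $g$ is even), so $\Omega(x,\cdot)$ is symmetric about each vertical line $y=(n+\tfrac12)x$. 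One also uses throughout that for bounded $y$ only the finitely many terms with $|y-nx|<1$ are nonzero. Write $d(y)=\min_{n\in\Z}|y-nx|$ for the distance from $y$ to $x\Z$.

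For part \eqref{Daphne}, take $x\ge2$. Any two distinct multiples of $x$ differ by at least $x\ge2$, hence cannot both lie within distance $1$ of $y$; so at most one term of the sum is nonzero and $\Omega(x,y)=\max(0,1-d(y))^2$. As $\Omega(x,y)\ge0$ always, its global minimum value is $0$, attained exactly when $d(y)\ge1$, i.e. exactly for $y\in\bigcup_{m\in\Z}[mx+1,(m+1)x-1]$, which is the asserted set (and is nonempty precisely because $x\ge2$).

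For part \eqref{Edith}, take $0<x\le2$. The plan is to show that $\Omega(x,\cdot)$ is strictly convex on each closed interval $[nx,(n+1)x]$ between consecutive multiples of $x$; being symmetric about the midpoint $(n+\tfrac12)x$, it then has on $[nx,(n+1)x]$ a unique minimizer, namely that midpoint. For convexity I would compute the distributional second derivative of $g$: since $g(t)=(1-|t|)^2$ on $[-1,1]$ and $g$ vanishes outside, $g$ is Lipschitz (with $g'$ continuous at $\pm1$ but jumping by $-4$ at $0$), so $g''=2\cdot\mathbf 1_{(-1,1)}-4\delta_0$ as a distribution. Summing over $n$, the distributional second derivative of $\Omega(x,\cdot)$ equals $2\cdot\#\{n:|y-nx|<1\}$ (an absolutely continuous part) minus $4$ times the sum of point masses at the multiples of $x$. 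On an open interval $(nx,(n+1)x)$ the point masses are absent, and the density $2\cdot\#\{n:|y-nx|<1\}$ is $\ge2$ off a finite set (the nearer endpoint lies within distance $x/2\le1$ of $y$); hence on $(nx,(n+1)x)$ the function $\Omega(x,\cdot)$ has a nonnegative distributional second derivative that is $\ge2$ almost everywhere, so it is strictly convex there, and then $\Omega(x,y)-y^2$ is convex on the closed interval $[nx,(n+1)x]$ by continuity, making $\Omega(x,\cdot)$ strictly convex on $[nx,(n+1)x]$. Strict convexity together with the symmetry $\Omega(x,nx+t)=\Omega(x,(n+1)x-t)$ gives, for any $y\neq(n+\tfrac12)x$ in this interval, $\Omega\bigl(x,(n+\tfrac12)x\bigr)<\tfrac12\Omega(x,y)+\tfrac12\Omega\bigl(x,(2n+1)x-y\bigr)=\Omega(x,y)$, so the unique minimizer on the interval is the midpoint. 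By periodicity the global minimizers of $\Omega(x,\cdot)$ are exactly the points $(n+\tfrac12)x$, $n\in\Z$, and the minimum value is $\Omega(x,\tfrac x2)=\sum_{n\in\Z}\max(0,1-|(n+\tfrac12)x|)^2$ after reindexing.

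The step I expect to require the most care is the convexity argument at the endpoints $nx$, where $\Omega(x,\cdot)$ has a downward corner (the coefficient $-4\delta_0$): one must upgrade strict convexity from the open interval to the closed interval using only continuity, which is the reason for peeling off the smooth strictly convex summand $y^2$ and invoking that a function convex on an open interval and continuous on its closure is convex on the closure. A minor consistency check is the boundary case $x=2$, where the minimum value equals $0$ and the two descriptions in \eqref{Daphne} and \eqref{Edith} reduce to the same set of minimizers, so the two halves of the lemma agree there.
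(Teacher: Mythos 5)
Your part~(\ref{Daphne}) argument is essentially the same as the paper's: both reduce to $x\ge 2$ by symmetry, observe that the terms are nonnegative and at most one can be nonzero, and read off the zero set. Your part~(\ref{Edith}) argument is correct but takes a genuinely different route. The paper also reduces by the symmetries $\Omega(-x,y)=\Omega(x,y)$, $\Omega(x,-y)=\Omega(x,y)$, $\Omega(x,y+x)=\Omega(x,y)$ to the case $0<x\le 2$, $0\le y\le x/2$, but then writes $\Omega(x,y)$ explicitly as two finite sums of squares, differentiates in $y$, and shows $\partial_y\Omega(x,y)<0$ on $[0,x/2)$, so that $y=x/2$ is the unique minimizer by monotonicity; you instead argue via strict convexity, computing the distributional second derivative $g''=2\cdot\mathbf 1_{(-1,1)}-4\delta_0$ of the squared tent, noting that on each open period interval $(nx,(n+1)x)$ the $\delta$-masses are absent and the absolutely continuous density is $\ge 2$ a.e.\ (since the nearer endpoint is within $x/2\le 1$ of $y$), so $\Omega(x,\cdot)-y^2$ is convex there and hence on the closed interval by continuity, making $\Omega(x,\cdot)$ strictly convex; combined with the reflection symmetry about $(n+\tfrac12)x$, this pins the unique minimizer at the midpoint. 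Your route avoids any explicit formula for $\Omega(x,y)$ and makes the role of the kinks at $x\Z$ transparent; the paper's route is more elementary (no distributions, just a finite-sum differentiation) and, because it computes the derivative exactly, is more directly reusable for the quantitative estimates elsewhere in Section~\ref{Valerie}. Both are complete, and your boundary check at $x=2$ matches.
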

\begin{proof}
For part \eqref{Daphne}, note that all the terms of $\Omega(x,y)$ are nonnegative.  
Since $\Omega(x,y)=\Omega(-x,y)$, we may assume without loss of generality that $x \geq 2$, and then the term $\max(0,1-|x n-y|)^2$ is nonvanishing if and only if $\frac{y-1}{x} < n < \frac{y+1}{x}$.
Thus we can obtain a global minimum value of $0$ if we can arrange that no integer lie in the interval $(\frac{y-1}{x},\frac{y+1}{x})$.
If $m$ is the greatest integer lying below this interval (so that $y \geq m x+1$), then for the next integer $m+1$ to lie above the interval, it is necessary and sufficient that $y\leq (m+1)x-1$.

For part \eqref{Edith}, it is clear from the definition \eqref{Hilda} of $\Omega$ that $\Omega(-x,y)=\Omega(x,y)$, $\Omega(x,-y)=\Omega(x,y)$, and $\Omega(x,y)=\Omega(x,y+x)$.
So without loss of generality we may restrict our attention to the case where $0 < x \leq 2$ and $0 \leq y \leq x/2$.
In this case
\[
\Omega(x,y) = \sum_{\ceil{(y-1)/x} \leq n \leq 0} (y-1-n x)^2 + \sum_{0 < n \leq \floor{(y+1)/x}} (y+1-n x)^2,
\]
and we reparameterize the sums to obtain
\[
\Omega(x,y) = \sum_{0 \leq n \leq \floor{(1-y)/x}} (y-1+n x)^2 + \sum_{0 \leq n \leq \floor{(1+y-x)/x}} (y-x+1-n x)^2,
\]
and calculate
\begin{align*}
\frac{\partial}{\partial y} \Omega(x,y) 
& = \! \sum_{0 \leq n \leq \floor{(1-y)/x}} 2(y-1+n x) + \! \sum_{0 \leq n \leq \floor{(1+y-x)/x}} 2(y-x+1-n x) \\
& = 2 \Bigfloor{\frac{y+1}{x}} (2 y-x)+ \sum_{\floor{(1+y-x)/x}  < n \leq \floor{(1-y)/x}} 2(y-1+n x),
\end{align*}
because $(1-y)/x$ is greater than or equal to $(1+y-x)/x$, and note for the remainder of this proof that their difference is at most $1$.
Since $0 \leq y \leq x/2 \leq 1$, we can see that both terms in the last expression for our partial derivative are nonpositive, with the summation over $n$ strictly negative if $y < x-1$, and the other term is strictly negative if $x-1 \leq y < x/2$.  Thus our partial derivative is strictly negative for $0 \leq y < x/2$, and so for our ranges of $x$ and $y$ values, the unique minimum is obtained when $y=x/2$.
\end{proof}
The last two results are used in Section \ref{Valerie} to show that a large limiting size will make the ratio of $\Lfour$ to $\Ltwo$ norm large.
\begin{lemma}\label{Katherine}
If $\pfam$ is a size-stable family, with limiting size $\sigma$, of additive character polynomials over fields $\fields$, then 
\[
\liminf_{\qi \to \infty} \qnormrat{\fio} \geq \Omega\left(\frac{1}{\sigma},0\right).
\]
\end{lemma}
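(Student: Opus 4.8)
The plan is to exploit the periodicity of the arrangement. Since $\alpha\colon\Z\to\Fqu$ is an epimorphism onto the cyclic group $\Fqu$ of order $q-1$, the coefficient sequence of an additive character polynomial repeats with period $q-1$, so its aperiodic autocorrelation at shifts divisible by $q-1$ is both large and nonnegative; keeping only these shifts when expanding $\Lff{f}$ will already give the claimed bound, with no need for Gauss sum estimates or error terms. (This is, in effect, a self-contained re-derivation of just the ``$A$'' term in the proof of Theorem~\ref{Gabriel}\eqref{Fabian}, retained as a one-sided inequality.)

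First I would fix $\iota\in I$, abbreviate $f=\fio$, $S=\Si$, $q=\qi$, and write $f(z)=\sum_{j\in S}c_j z^j$ with $c_j=\psi(\alpha(j+t))$ (translation $t\in\Z$), extended by $c_j=0$ for $j\notin S$. Expanding $|f(e^{i\theta})|^2=\sum_{\ell\in\Z}C_\ell e^{i\ell\theta}$ with $C_\ell=\sum_{j\in\Z}c_j\conj{c_{j-\ell}}$ and applying Parseval gives $\Lff{f}=\sum_{\ell\in\Z}|C_\ell|^2$; in particular $C_0=\card{S}=\Ltt{f}$, so $\qnormrat{f}=\Lff{f}/\card{S}^2$.

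Next comes the one computation that matters: for $\ell=(q-1)m$ with $m\in\Z$, the fact that $\ker\alpha$ has index $q-1$ in $\Z$ and hence equals $(q-1)\Z$ forces $\alpha(j-(q-1)m+t)=\alpha(j+t)$, so $c_{j-(q-1)m}=c_j$ for every $j$, whence $C_{(q-1)m}=\card{S\cap(S+(q-1)m)}=\max(0,\card{S}-(q-1)|m|)$, because $S$ is a segment. Discarding every shift not divisible by $q-1$ then gives
\[
\qnormrat{f}\ \ge\ \frac{1}{\card{S}^2}\sum_{m\in\Z}\max\big(0,\card{S}-(q-1)|m|\big)^2\ =\ \Omega\!\left(\frac{q-1}{\card{S}},0\right).
\]
Finally I would let $\qi\to\infty$: by size-stability $(\qi-1)/\card{\Si}\to1/\sigma\ne0$, so by continuity of $\Omega$ on $\{(x,y):x\ne0\}$ the right-hand side converges to $\Omega(1/\sigma,0)$, and therefore $\liminf_{\qi\to\infty}\qnormrat{\fio}\ge\Omega(1/\sigma,0)$.

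I do not anticipate any real obstacle. The only substantive point is recognizing that the shifts divisible by $q-1$ contribute precisely the ``trivial'' autocorrelation values $\max(0,\card{S}-(q-1)|m|)$ — exactly the quantities packaged by $\Omega$ — after which the argument is pure bookkeeping. Because only a one-sided estimate is wanted, we can afford to throw away all the other autocorrelation terms, which is what lets us bypass the Weil-type bounds needed to pin down the full asymptotic in Theorem~\ref{Gabriel}\eqref{Fabian}.
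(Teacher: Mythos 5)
Your proof is correct and follows essentially the same route as the paper's: both expand $\Lff{f}$ as the sum of squared aperiodic autocorrelations (you phrase this via Laurent/Parseval, the paper via coefficients of $f(z)\conj{f(z)}$), both observe that the coefficients repeat with period $q-1$ because $\ker\alpha=(q-1)\Z$, both retain only the shifts divisible by $q-1$ to get the one-sided bound $\qnormrat{f}\geq\frac{1}{\card{S}^2}\sum_m\max(0,\card{S}-(q-1)|m|)^2$, and both finish by size-stability and continuity of $\Omega$. There is no substantive difference.
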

\begin{proof}
For each $\iota\in I$, let $\fio$ have character $\psii$, support $\Si$, translation $t_i$, and arrangement $\alphai$, so that $\fio(z)=\sum_{s \in \Si} \psii(\alphai(s+t_i)) z^s$
When we confine the values of $z$ to the complex unit circle, we have
\[
\conj{\fio(z)} = \sum_{s \in \Si} \conj{\psii(\alphai(s+t_i))} z^{-s}.
\]
We can consider $\fio(z)$ and $\conj{\fio(z)}$ formally as elements of $\C[z,z^{-1}]$, and view $\Lff{\fio}$ as the sum of the squared magnitudes of the coefficients of $\fio(z) \conj{\fio(z)}$.
The coefficient of $z^s$ in $\fio(z) \conj{\fio(z)}$ is
\[
\sums{u,v \in \Si \\ u-v= s} \psii(\alphai(u+t_i)) \conj{\psii(\alphai(v+t_i))}.
\]
Since $\alphai$ is an epimorphism from $\Z$ to $\F_{\qi}^*$, we see that $\psii(\alphai(u+t_i))=\psii(\alphai(v+t_i))$ whenever $u\equiv v\pmod{\qi-1}$.
Thus if $s \equiv 0 \pmod{\qi-1}$, the coefficient of $z^s$ in $\fio(z) \conj{\fio(z)}$ is equal to $\card{\Si\cap(s+\Si)}$.
Since $\Lff{\fio}$ is the sum of the squared magnitudes of the coefficients of $\fio(z) \conj{\fio(z)}$ while $\Ltt{\fio}$ is the coefficient of $z^0$ of the same, we have
\[
\qnormrat{\fio} \geq \frac{\sum_{n \in \Z} \card{\Si\cap(n(\qi-1)+\Si)}^2}{\card{\Si}^2},
\]
and then we note that $\card{\Si\cap(n(\qi-1)+\Si)}=\max(0,\card{\Si}-|n(\qi-1)|)$ and apply the size-stability limit $\card{\Si}/(\qi-1) \to \sigma$ as $\qi\to\infty$.
\end{proof}
\begin{lemma}\label{Laura}
If $\pfam$ is a size-stable family, with limiting sizes $\sigmas$, of $e$-variable multiplicative character polynomials over fields $\fields$, then
\[
\liminf_{\qi \to \infty} \qnormrat{\fio} \geq \prod_{j=1}^e \Omega\left(\frac{1}{\sigma_j},0\right).
\]
\end{lemma}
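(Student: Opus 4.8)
The plan is to adapt the argument of Lemma \ref{Katherine} to the $e$-variable multiplicative setting. Two features are new: the box structure of the support must be exploited multiplicatively, and the extended character $\chii$ may vanish at a few support points, which we absorb using Lemma \ref{Caroline}. Fix $\iota\in I$, and let $\fio$ have character $\chii$, support $\Si=S_{\iota,1}\times\cdots\times S_{\iota,e}$, translation $\ti=(t_{\iota,1},\ldots,t_{\iota,e})$, and arrangement $\alphai$, and let $\pio$ be the characteristic of $\Fqi=\F_{\pio^e}$. Restricting $z=(z_1,\ldots,z_e)$ to $\card{z_1}=\cdots=\card{z_e}=1$, I would view $\fio(z)\conj{\fio(z)}$ as an element of $\C[z_1^{\pm1},\ldots,z_e^{\pm1}]$; then, exactly as in Lemma \ref{Katherine}, $\Lff{\fio}$ is the sum of the squared magnitudes of its coefficients, while $\Ltt{\fio}$ is its constant coefficient.

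The next step is to compute the coefficient of $z^s$ in $\fio(z)\conj{\fio(z)}$, namely $\sums{u,v\in\Si\\ u-v=s}\chii(\alphai(u+\ti))\conj{\chii(\alphai(v+\ti))}$, for $s\in\Ze$. As in the proof of Theorem \ref{Harold}, the epimorphism $\alphai$ factors as $\gamma\circ\beta$ with $\beta\colon\Ze\to(\Z/\pio\Z)^e$ the coordinate-wise reduction modulo $\pio$ and $\gamma$ a group isomorphism, so $\alphai(u+\ti)=\alphai(v+\ti)$ whenever $u\equiv v\pmod{\pio}$ coordinate-wise. Hence, when $s\in\pio\Ze$, each summand equals $\card{\chii(\alphai(u+\ti))}^2\in\{0,1\}$, and the coefficient of $z^s$ is the nonnegative integer $c_\iota(s):=\card{(\Si\cap(s+\Si))\smallsetminus(\ker\alphai-\ti)}$. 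Because $\Si$ is a box, $\card{\Si\cap(\pio n+\Si)}=\prod_{j=1}^e\max(0,\card{S_{\iota,j}}-\pio\card{n_j})$ for $n=(n_1,\ldots,n_e)\in\Ze$, and Lemma \ref{Caroline} provides $Q$ and $N$ with $\card{\Si\cap(\ker\alphai-\ti)}\leq N$ whenever $\qi\geq Q$; so for such $\iota$ we have the squeeze $\card{\Si\cap(\pio n+\Si)}-N\leq c_\iota(\pio n)\leq\card{\Si\cap(\pio n+\Si)}$.

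Keeping only the monomials $z^{\pio n}$ in the formula for $\Lff{\fio}$ and using $\Ltt{\fio}\leq\card{\Si}$, I would then obtain, for every finite $F\subseteq\Ze$ and every $\iota$ with $\qi\geq Q$,
\[
\qnormrat{\fio}\;\geq\;\frac{1}{\card{\Si}^2}\sum_{n\in F}c_\iota(\pio n)^2 .
\]
For each fixed $n\in\Ze$, the squeeze above together with size-stability ($\card{S_{\iota,j}}/\pio\to\sigma_j$ and $\card{\Si}\to\infty$) gives $c_\iota(\pio n)/\card{\Si}\to\prod_{j=1}^e\max(0,1-\card{n_j}/\sigma_j)$ as $\qi\to\infty$. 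Since the liminf of a finite sum of nonnegative quantities is at least the sum of their limits, it follows that
\[
\liminf_{\qi\to\infty}\qnormrat{\fio}\;\geq\;\sum_{n\in F}\prod_{j=1}^e\max\left(0,1-\frac{\card{n_j}}{\sigma_j}\right)^2 ,
\]
and letting $F$ exhaust $\Ze$ and factoring the sum yields $\prod_{j=1}^e\sum_{n_j\in\Z}\max(0,1-\card{n_j}/\sigma_j)^2=\prod_{j=1}^e\Omega(1/\sigma_j,0)$ by \eqref{Hilda}.

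Beyond the bookkeeping already present in Lemma \ref{Katherine}, the parts that need care are the use of the box structure together with the factorization $\alphai=\gamma\circ\beta$ to reduce the $z^{\pio n}$-coefficients to pure cardinalities, and the appeal to Lemma \ref{Caroline} to control the bounded number of vanishing character values; passing to the liminf through a finite truncation $F$ is routine. I expect the only genuinely delicate point to be correctly carrying the bounded error from Lemma \ref{Caroline} through the limit, which the squeeze estimate above is designed to handle.
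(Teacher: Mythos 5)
Your proposal is correct and follows essentially the same route as the paper's proof: view $\Lff{\fio}$ via the coefficients of $\fio\conj{\fio}$, restrict to exponents in $\pio\Ze$ where the coefficient becomes a pure cardinality thanks to the factorization $\alphai=\gamma\circ\beta$, control the zero coefficients using Lemma \ref{Caroline}, and pass to the liminf using size-stability. The only cosmetic differences are that the paper packages the correction by defining $\Vi=\Si\smallsetminus(-\ti+\ker\alphai)$ and uses $\Ltt{\fio}=\card{\Vi}$ exactly, whereas you carry an additive error $N$ and bound $\Ltt{\fio}\leq\card{\Si}$, and you make the finite-truncation liminf step explicit.
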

\begin{proof}
For each $\iota\in I$, let $\fio$ have character $\chii$, support $\Si \subseteq \Ze$, translation $t_i \in \Z^e$, and arrangement $\alphai$, so that $\fio(z_1,\ldots,z_e)=\sum_{s \in \Si} \chii(\alphai(s+t_i)) z^s$, where we write $z^s$ for $z_1^{s_1} \cdots z_e^{s_e}$ when $s=(s_1,\ldots,s_e)$.
Our proof runs the same as that of the previous lemma for additive character polynomials once we replace $\psii$ with $\chii$, but we must take care of the fact that $\chii(\alphai(s+t_i))=0$ when $s\in -t_i+\ker\alphai$; otherwise the coefficients are of unit magnitude.
And of course the polynomials are in $e$ variables and the coefficients have periodicity $p$ in each direction.
Thus if we define $\Vi=\Si\smallsetminus(-t_i+\ker\alphai)$ we have
\[
\qnormrat{\fio} \geq \frac{\sum_{n \in \Ze} \card{\Vi\cap(n \pio+\Vi)}^2}{\card{\Vi}^2},
\]
but Lemma \ref{Caroline} can be used to show that the ratio $\card{\Vi\cap(n \pio+\Vi)}/\card{\Vi}$ has the same limit as $\card{\Si\cap(n \pio+\Si)}/\card{\Si}$ as $\qi\to\infty$.
\end{proof}
\section*{Acknowledgements}
The author thanks Jonathan Jedwab and Kai-Uwe Schmidt for helpful suggestions on the presentation of these results.


\begin{thebibliography}{10}

\bibitem{Baker-Montgomery}
R.~C. Baker and H.~L. Montgomery.
\newblock Oscillations of quadratic {$L$}-functions.
\newblock In {\em Analytic number theory ({A}llerton {P}ark, {IL}, 1989)},
  volume~85 of {\em Progr. Math.}, pages 23--40. Birkh\"auser Boston, Boston,
  MA, 1990.

\bibitem{Bernasconi}
J.~Bernasconi.
\newblock Low autocorrelation binary sequences: Statistical mechanics and
  configuration space analysis.
\newblock {\em J. Phys. France}, 48(4):559--567, 1987.

\bibitem{Boemer-Antweiler}
L.~B{\"o}mer and M.~Antweiler.
\newblock Optimizing the aperiodic merit factor of binary arrays.
\newblock {\em Signal Processing}, 30(1):1 -- 13, 1993.

\bibitem{Borwein}
P.~Borwein.
\newblock {\em Computational excursions in analysis and number theory}.
\newblock Springer-Verlag, New York, 2002.

\bibitem{Borwein-Choi-2000}
P.~Borwein and K.-K.~S. Choi.
\newblock Merit factors of character polynomials.
\newblock {\em J. London Math. Soc. (2)}, 61(3):706--720, 2000.

\bibitem{Borwein-Choi-2002}
P.~Borwein and K.-K.~S. Choi.
\newblock Explicit merit factor formulae for {F}ekete and {T}uryn polynomials.
\newblock {\em Trans. Amer. Math. Soc.}, 354(1):219--234, 2002.

\bibitem{Borwein-Choi-Yazdani}
P.~Borwein, K.-K.~S. Choi, and S.~Yazdani.
\newblock An extremal property of {F}ekete polynomials.
\newblock {\em Proc. Amer. Math. Soc.}, 129(1):19--27, 2001.

\bibitem{Conrey-Granville-Poonen-Soundararajan}
B.~Conrey, A.~Granville, B.~Poonen, and K.~Soundararajan.
\newblock Zeros of {F}ekete polynomials.
\newblock {\em Ann. Inst. Fourier (Grenoble)}, 50(3):865--889, 2000.

\bibitem{Erdos-1957}
P.~Erd{\H{o}}s.
\newblock Some unsolved problems.
\newblock {\em Michigan Math. J.}, 4:291--300, 1957.

\bibitem{Erdos-1962}
P.~Erd{\H{o}}s.
\newblock An inequality for the maximum of trigonometric polynomials.
\newblock {\em Ann. Polon. Math.}, 12:151--154, 1962.

\bibitem{Fekete-Polya}
M.~Fekete and G.~P{\'o}lya.
\newblock {\"U}ber ein {P}roblem von {L}aguerre.
\newblock {\em Rend. Circ. Mat. Palermo}, 34(1):89--120, 1912.

\bibitem{Golay-1977}
M.~J.~E. Golay.
\newblock The merit factor of long low autocorrelation binary sequences.
\newblock {\em IEEE Trans. Inform. Theory}, 23(1):43--51, 1977.

\bibitem{Gulliver-Parker}
T.~Gulliver and M.~Parker.
\newblock The multivariate merit factor of a {B}oolean function.
\newblock In {\em Information Theory Workshop, 2005 IEEE}, page 5 pp., aug.-1
  sept. 2005.

\bibitem{Hoholdt}
T.~H{\o}holdt.
\newblock The merit factor problem for binary sequences.
\newblock In {\em Applied algebra, algebraic algorithms and error-correcting
  codes}, volume 3857 of {\em Lecture Notes in Comput. Sci.}, pages 51--59.
  Springer, Berlin, 2006.

\bibitem{Hoholdt-Jensen}
T.~H{\o}holdt and H.~E. Jensen.
\newblock Determination of the merit factor of {L}egendre sequences.
\newblock {\em IEEE Trans. Inform. Theory}, 34(1):161--164, 1988.

\bibitem{Jedwab}
J.~Jedwab.
\newblock A survey of the merit factor problem for binary sequences.
\newblock In T.~Helleseth, D.~Sarwate, H.-Y. Song, and K.~Yang, editors, {\em
  Sequences and Their Applications - SETA 2004}, volume 3486 of {\em Lecture
  Notes in Computer Science}, pages 19--21. Springer Berlin / Heidelberg, 2005.

\bibitem{Jedwab-Katz-Schmidt-b}
J.~Jedwab, D.~Katz, and K.-U. Schmidt.
\newblock Advances in the merit factor problem for binary sequences.
\newblock {\em arXiv:1205.0260v1 [math.NT]}, 2012.

\bibitem{Jedwab-Katz-Schmidt-a}
J.~Jedwab, D.~Katz, and K.-U. Schmidt.
\newblock Littlewood polynomials with small {$L^4$} norm.
\newblock {\em arXiv:1205.0626v1 [math.CO]}, 2012.

\bibitem{Jedwab-Schmidt}
J.~Jedwab and K.-U. Schmidt.
\newblock Appended {$m$}-sequences with merit factor greater than 3.34.
\newblock In C.~Carlet and A.~Pott, editors, {\em Sequences and Their
  Applications – SETA 2010}, volume 6338 of {\em Lecture Notes in Computer
  Science}, pages 204--216. Springer Berlin / Heidelberg, 2010.

\bibitem{Jensen-Hoholdt}
H.~E. Jensen and T.~H{\o}holdt.
\newblock Binary sequences with good correlation properties.
\newblock In {\em Applied algebra, algebraic algorithms and error-correcting
  codes ({M}enorca, 1987)}, volume 356 of {\em Lecture Notes in Comput. Sci.},
  pages 306--320. Springer, Berlin, 1989.

\bibitem{Jensen-Jensen-Hoholdt}
J.~M. Jensen, H.~E. Jensen, and T.~H{\o}holdt.
\newblock The merit factor of binary sequences related to difference sets.
\newblock {\em IEEE Trans. Inform. Theory}, 37(3, part 1):617--626, 1991.

\bibitem{Kahane}
J.-P. Kahane.
\newblock Sur les polyn\^omes \`a coefficients unimodulaires.
\newblock {\em Bull. London Math. Soc.}, 12(5):321--342, 1980.

\bibitem{Lidl-Niederreiter}
R.~Lidl and H.~Niederreiter.
\newblock {\em Finite fields}, volume~20 of {\em Encyclopedia of Mathematics
  and its Applications}.
\newblock Cambridge University Press, Cambridge, second edition, 1997.
\newblock With a foreword by P. M. Cohn.

\bibitem{Littlewood-1966}
J.~E. Littlewood.
\newblock On polynomials {$\sum ^{n}\pm z^{m}$}, {$\sum ^{n}e^{\alpha
  _{m}i}z^{m}$}, {$z=e^{\theta _{i}}$}.
\newblock {\em J. London Math. Soc.}, 41:367--376, 1966.

\bibitem{Littlewood-1968}
J.~E. Littlewood.
\newblock {\em Some problems in real and complex analysis}.
\newblock D. C. Heath and Co. Raytheon Education Co., Lexington, Mass., 1968.

\bibitem{Montgomery}
H.~L. Montgomery.
\newblock An exponential polynomial formed with the {L}egendre symbol.
\newblock {\em Acta Arith.}, 37:375--380, 1980.

\bibitem{Newman-Byrnes}
D.~J. Newman and J.~S. Byrnes.
\newblock The {$L^4$} norm of a polynomial with coefficients {$\pm 1$}.
\newblock {\em Amer. Math. Monthly}, 97(1):42--45, 1990.

\bibitem{Polya}
G.~P{\'o}lya.
\newblock Verschiedene {B}emerkungen zur {Z}ahlentheorie.
\newblock {\em Jber. Deutsch. Math Verein.}, 28:31--40, 1919.

\bibitem{Schmidt}
K.-U. Schmidt.
\newblock The merit factor of binary arrays derived from the quadratic
  character.
\newblock {\em Adv. Math. Commun.}, 4(4):589--607, 2011.

\bibitem{Weil}
A.~Weil.
\newblock On some exponential sums.
\newblock {\em Proc. Nat. Acad. Sci. U. S. A.}, 34:204--207, 1948.

\end{thebibliography}
\end{document}